\newcommand{\arxiv}[1]{\href{http://arxiv.org/abs/#1}{\tt arXiv:\nolinkurl{#1}}}
\newcommand{\arXiv}[1]{\href{http://arxiv.org/abs/#1}{\tt arXiv:\nolinkurl{#1}}}
\newcommand{\googlebooks}[1]{(preview at \href{http://books.google.com/books?id=#1}{google books})}
\definecolor{dark-red}{rgb}{0.7,0.25,0.25}
\definecolor{dark-blue}{rgb}{0.15,0.15,0.55}
\definecolor{medium-blue}{rgb}{0,0,.8}
\definecolor{DarkGreen}{RGB}{0,150,0}
\definecolor{rho}{named}{red}
\theoremstyle{plain}
\newtheorem{thm}{Theorem}[section]
\newtheorem*{thm*}{Theorem}
\newtheorem{thmalpha}{Theorem}
\newtheorem{cor}[thm]{Corollary}
\newtheorem{coralpha}[thmalpha]{Corollary}
\newtheorem*{cor*}{Corollary}
\newtheorem*{conj*}{Conjecture}
\newtheorem{lem}[thm]{Lemma}
\newtheorem{fact}[thm]{Fact}
\newtheorem{prop}[thm]{Proposition}
\newtheorem*{quest*}{Question}
\newtheorem*{claim*}{Claim}
\theoremstyle{definition}
\newtheorem{defn}[thm]{Definition}
\newtheorem{nota}[thm]{Notation}
\newtheorem{ex}[thm]{Example}
\newtheorem{example}[thm]{Example}
\newtheorem{sub-ex}[thm]{Sub-Example}
\newtheorem{rem}[thm]{Remark}
\newtheorem*{rem*}{Remark}
\newtheorem{remark}[thm]{Remark}
\newtheorem{warn}[thm]{Warning}
\DeclareMathOperator{\Aut}{Aut}
\DeclareMathOperator{\coev}{coev}
\DeclareMathOperator{\Dim}{Dim}
\DeclareMathOperator{\End}{End}
\DeclareMathOperator{\ev}{ev}
\DeclareMathOperator{\gr}{gr}
\DeclareMathOperator{\Hom}{Hom}
\DeclareMathOperator{\Mat}{Mat}
\DeclareMathOperator{\id}{id}
\DeclareMathOperator{\im}{im}
\DeclareMathOperator{\Irr}{Irr}
\DeclareMathOperator{\Tr}{Tr}
\DeclareMathOperator{\tr}{tr}
\newcommand{\comment}[1]{}
\newcommand{\be}{\begin{enumerate}[label=(\arabic*)]}
\newcommand{\ee}{\end{enumerate}}
\def\semicolon{;}
\def\applytolist#1{
    \expandafter\def\csname multi#1\endcsname##1{
        \def\multiack{##1}\ifx\multiack\semicolon
            \def\next{\relax}
        \else
            \csname #1\endcsname{##1}
            \def\next{\csname multi#1\endcsname}
        \fi
        \next}
    \csname multi#1\endcsname}
\def\calc#1{\expandafter\def\csname c#1\endcsname{{\mathcal #1}}}
\def\bbc#1{\expandafter\def\csname bb#1\endcsname{{\mathbb #1}}}
\def\bfc#1{\expandafter\def\csname bf#1\endcsname{{\mathbf #1}}}
\def\sfc#1{\expandafter\def\csname s#1\endcsname{{\sf #1}}}
\def\fc#1{\expandafter\def\csname f#1\endcsname{{\mathfrak #1}}}
\newcommand{\Fun}{{\sf Fun}}
\newcommand{\Bim}{{\sf Bim}}
\newcommand{\bfBim}{{\sf Bim_{bf}}}
\newcommand{\fdVec}{{\sf Vec_{fd}}}
\newcommand{\Hilb}{{\sf Hilb}}
\newcommand{\fdHilb}{{\sf Hilb_{fd}}}
\newcommand{\noshow}[1]{}
\newcommand{\MR}[1]{}
\newcommand{\Cstar}{{\rm C^*}}
\tikzset{vertex/.style = {shape=circle,draw,fill=black,inner sep=0pt,minimum size=5pt}}
\tikzset{edge/.style = {->,> = latex', bend right}}
\tikzset{
	super thick/.style={line width=3pt}
}
\tikzset{
    quadruple/.style args={[#1] in [#2] in [#3] in [#4]}{
        #1,preaction={preaction={preaction={draw,#4},draw,#3}, draw,#2}
    }
}
\tikzstyle{shaded}=[fill=red!10!blue!20!gray!30!white]
\tikzstyle{unshaded}=[fill=white]
\tikzstyle{empty box}=[circle, draw, thick, fill=white, opaque, inner sep=2mm]
\tikzstyle{annular}=[scale=.7, inner sep=1mm, baseline]
\tikzstyle{rectangular}=[scale=.75, inner sep=1mm, baseline=-.1cm]
\tikzstyle{mid>}=[decoration={markings, mark=at position 0.5 with {\arrow{>}}}, postaction={decorate}]
\tikzstyle{mid<}=[decoration={markings, mark=at position 0.5 with {\arrow{<}}}, postaction={decorate}]
\tikzstyle{over}=[double, draw=white, super thick, double=]
\tikzstyle{box} = [rectangle,draw,rounded corners=5pt,very thick]
\newcommand{\roundNbox}[6]{
	\draw[rounded corners=5pt, very thick, #1] ($#2+(-#3,-#3)+(-#4,0)$) rectangle ($#2+(#3,#3)+(#5,0)$);
	\coordinate (ZZa) at ($#2+(-#4,0)$);
	\coordinate (ZZb) at ($#2+(#5,0)$);
	\node at ($1/2*(ZZa)+1/2*(ZZb)$) {#6};
}
\begin{document}
\title{Unitary dual functors for unitary multitensor categories}
\author{David Penneys}
\date{\today}
\maketitle
\begin{abstract}
We classify which dual functors on a unitary multitensor category are compatible with the dagger structure in terms of groupoid homomorphisms from the universal grading groupoid to $\mathbb{R}_{>0}$ where the latter is considered as a groupoid with one object.
We then prove that all unitary dual functors induce unitarily equivalent bi-involutive structures.
As an application, we provide the unitary version of the folklore correspondence between shaded planar ${\rm C^*}$ algebras with finite dimensional box spaces and unitary multitensor categories with a chosen unitary dual functor and chosen generator.
We make connection with the recent work of Giorgetti-Longo to determine when the loop parameters in these planar algebras are scalars.
Finally, we show that we can correct for many non-spherical choices of dual functor by adding the data of a spherical state on $\End_{\mathcal{C}}(1_{\mathcal{C}})$, similar to the spherical state for a graph planar algebra.
\end{abstract}

\section{Introduction}

In a rigid monoidal category $\cC$, 
every object has a \emph{dual}, consisting of a triple $(c^\vee, \ev_c, \coev_c)$ where $c^\vee \in \cC$ and $\ev_c \in\cC(c^\vee\otimes c \to 1_\cC)$ and $\coev_c\in \cC(1_\cC \to c\otimes c^\vee)$ satisfy the \emph{zig-zag axioms}:
$$
\begin{tikzpicture}[baseline=-.1cm, xscale=-1]
	\draw (-.6,-.6) -- (-.6,0) arc (180:0:.3cm) arc (-180:0:.3cm) -- (.6,.6);
	\node at (-.8,-.2) {\scriptsize{$c$}};
	\node at (-.25,0) {\scriptsize{$c^\vee$}};
	\node at (.8,.2) {\scriptsize{$c$}};
\end{tikzpicture}
:=
(\id_c \otimes \ev_c)\circ(\coev_c\otimes \id_c)
=
\id_c
=:
\begin{tikzpicture}[baseline=-.1cm]
	\draw (0,-.6) -- (0,.6);
	\node at (.2,0) {\scriptsize{$c$}};
\end{tikzpicture}
\qquad\qquad
\begin{tikzpicture}[baseline=-.1cm]
	\draw (-.6,-.6) -- (-.6,0) arc (180:0:.3cm) arc (-180:0:.3cm) -- (.6,.6);
	\node at (-.8,-.2) {\scriptsize{$c^\vee$}};
	\node at (.15,0) {\scriptsize{$c$}};
	\node at (.8,.2) {\scriptsize{$c^\vee$}};
\end{tikzpicture}
=
\begin{tikzpicture}[baseline=-.1cm]
	\draw (0,-.6) -- (0,.6);
	\node at (.2,0) {\scriptsize{$c^\vee$}};
\end{tikzpicture}
=
\id_{c^\vee},
$$
and every object is isomorphic to the dual of some other object.
By choosing a dual for each $c\in \cC$, we get an anti-monoidal  \emph{dual functor} $\vee : \cC \to \cC$ defined on a morphism $f\in \cC(a\to b)$ by
$$
f^\vee 
:=
\begin{tikzpicture}[baseline=-.1cm]
	\draw (0,.3) arc (0:180:.3cm) -- (-.6,-.8);
	\draw (0,-.3) arc (-180:0:.3cm) -- (.6,.8);
	\roundNbox{fill=white}{(0,0)}{.3}{0}{0}{$f$}
	\node at (-.8,-.6) {\scriptsize{$d^\vee$}};
	\node at (.8,.6) {\scriptsize{$c^\vee$}};
\end{tikzpicture}
=
(\ev_d\otimes \id_{c^\vee})
\circ
(\id_{d^\vee}\otimes f\otimes \id_{c^\vee})
\circ
(\id_{d^\vee}\otimes \coev_c).
$$
A dual functor comes with
a canonical anti-monoidal tensorator
$$
\nu_{a,b} 
:=
\begin{tikzpicture}[baseline=-.1cm]
	\draw (.8,.6) -- (.8,0) arc (0:-180:.4cm) arc (0:180:.2cm) -- (-.4,-.6);
	\draw (.7,.6) -- (.7,0) arc (0:-180:.3cm) arc (0:180:.5cm) -- (-.9,-.6);
	\node at (1.4,.4) {\scriptsize{$(a\otimes b)^\vee$}};
	\node at (-.6,-.4) {\scriptsize{$a^\vee$}};
	\node at (-1.1,-.4) {\scriptsize{$b^\vee$}};
\end{tikzpicture}
=
(\ev_b\otimes \id_{(a\otimes b)^\vee})
\circ 
(\id_{b^\vee}\otimes \ev_a \otimes \id_b \otimes \id_{(a\otimes b)^\vee})
\circ
(\id_{b^\vee\otimes a^\vee} \otimes \coev_{a\otimes b}),
$$
and any two dual functors $\vee_1,\vee_2$ are \emph{uniquely} monoidally naturally isomorphic via 
$$
\zeta_c
:=
\begin{tikzpicture}[baseline=-.1cm]
	\draw[thick, red] (-.4,-.4) -- (-.4,0) arc (180:0:.2cm);
	\draw[thick] (.4,.4) -- (.4,0) arc (0:-180:.2cm);
	\node at (-.7,-.2) {\scriptsize{$c^{\vee_2}$}};
	\node at (.7,.2) {\scriptsize{$c^{\vee_1}$}};
\end{tikzpicture}
= 
(\ev_c^2 \otimes \id_{c^{\vee_1}})\circ (\id_{c^{\vee_2}} \otimes \coev_c^1).
$$
A \emph{pivotal structure} on $\cC$ is a pair $(\vee, \varphi)$, where $\vee$ is a chosen dual functor, and $\varphi: \id \Rightarrow \vee\circ\vee$ is a monoidal natural isomorphism.
Using $\varphi$, one can define the left and right \emph{quantum dimension} of an object $c\in \cC$; we refer the reader to \S\ref{sec:PivotalStructures} for a detailed discussion of pivotal structures.

The above definitions are poorly behaved in the context of rigid tensor $\Cstar$ categories.
More precisely, the above definitions fail the \emph{principle of equivalence} \cite{nlab:principle_of_equivalence}, which roughly states that mathematical definitions should be invariant under the proper notion of equivalence.
As a basic example, when one works with Hilbert spaces, the correct notion of equivalence is that of \emph{unitary isomorphism}, i.e., bounded linear maps $u: H\to K$ such that $u^*u = \id_H$ and $uu^* = \id_K$, and \emph{not} bouned linear isomorphism.\footnote{
Conjugating a self-adjoint operator by a bounded linear isomorphism need not produce a self-adjoint operator unless the isomorphism is unitary.
Similarly, in finite dimensions, taking coordinates for a self-adjoint opertator with respect to a basis need not produce a self-adjoint matrix unless the basis is orthonormal.
In this respect, the notions of linear isomorphism and basis fail the principle of equivalence for the $\Cstar$ category $\fdHilb$ of finite dimensional Hilbert spaces, while the notions of unitary isomorphism and orthonormal basis satisfy the principle of equivalence.
}
As $\Cstar$ categories admit an equivalent definition as those categories which admit a faithful dagger functor to the category $\Hilb$ of Hilbert spaces which is norm-closed on the level of hom spaces \cite{MR808930}, we see that one must work with dagger functors and unitary (natural) isomorphisms to satisfy the principle of equivalence for dagger categories.

Indeed, a dual functor on a rigid tensor $\Cstar$ category need not be a dagger functor, and the canonical tensorator $\nu$ need not be unitary.
With this in mind, we call a dual functor $\vee: \cC \to \cC$ \emph{unitary} if it is a dagger tensor functor, i.e., for all $a,b\in \cC$ and $f\in \cC(a\to b)$, the canonical tensorator $\nu_{a,b}$ is unitary and $f^{\vee\dag} = f^{\dag \vee}$. 
Given a unitary dual functor $\vee: \cC \to \cC^{\text{mop}}$, there is a unique pivotal structure for which left and right dimensions of objects are positive\,\footnote{
\label{footnote:Pseudounitary}
We call a pivotal structure \emph{pseudounitary} if all quantum dimensions of objects are strictly positive.
This definition is equivalent to \cite[Def.~9.4.4]{MR3242743} for fusion categories by uniqueness of the Frobenius-Perron dimensions.
} 
and given by $\ev^\dag_c \circ \ev_c$ and $\coev_c \circ \coev^\dag_c$ respectively:
\begin{equation}
\label{eq:CanonicalPivotalStructureFromUnitaryDualFunctor}
\varphi_c 
:=
(\coev_{c}^\dag \otimes \id_{c^{\vee\vee}})\circ (\id_c\otimes \coev_{c^\vee})
=
(\id_{c^{\vee\vee}} \otimes \ev_c)\circ ( \ev_{c^\vee}^\dag \otimes \id_c).
\end{equation}
By \cite[Lem.~7.5]{MR2767048} (which is Proposition \ref{prop:VeeAndDagCompatibility} below), a dual functor $\vee$ is unitary if and only if $\varphi$ defined as in \eqref{eq:CanonicalPivotalStructureFromUnitaryDualFunctor} above defines a pivotal structure.
We call such pivotal structures \emph{unitary}, but one should really only consider the term `unitary pivotal structure' as a synonym for `the canonical pivotal structure associated to a unitary dual functor' as in \cite[\S7.3]{MR2767048}.

Unitary dual functors on rigid tensor $\Cstar$ categories were first constructed in \cite{MR1444286,MR2091457,MR3342166}.
The notion of a quantum dimension for dualizable objects in a tensor $\Cstar$ category with simple unit object was established in \cite{MR1444286} via \emph{standard solutions to the conjugate equations}.
In \cite{MR2091457}, it was further clarified that for a \emph{unitary tensor category} $\cC$, which is an idempotent complete rigid tensor $\Cstar$ category with simple unit object, for every object $c\in \cC$, there is a unique \emph{balanced} dual $(\overline{c}, \ev_c, \coev_c)$ up to unique unitary isomorphism satisfying the zig-zag axioms and the balancing equation:
$$
\ev_c \circ (\id_{\overline{c}} \otimes f) \circ \ev_c^\dag
=
\coev_c^\dag \circ (f\otimes \id_{\overline{c}})\circ \coev_c
\qquad
\forall c\in \cC, f\in \cC(c\to c).
$$
Moreover, choosing these balanced duals gives gives a canonical unitary dual functor whose associated unitary pivotal structure is \emph{spherical}.
This result was later expanded in \cite{MR3342166}, in the context of von Neumann algebras with finite dimensional centers, to \emph{unitary multitensor categories}, which are idempotent complete rigid tensor $\Cstar$ categories.
For a unitary multitensor category $\cC$, $1_\cC$ is no longer simple; however, since $\cC$ is automatically semisimple by a generalization of \cite[Lem.~3.9]{MR1444286}, $1_\cC$ decomposes as an orthogonal finite direct sum of simples $1_\cC = \bigoplus_{i=1}^r 1_i$.
Each `corner' $\cC_{ii}:=1_i \otimes \cC \otimes 1_i $ is again a unitary tensor category.

While the existence of this canonical unitary dual functor and spherical structure for a unitary multitensor category is extremely powerful, it is not always the most relevant unitary dual functor for applications.
A first example is the unitary tensor category $\bfBim(R)$ of bifinite bimodules over the hyperfinite ${\rm II}_1$ factor $R$.
The most widely used unitary dual functor on $\bfBim(R)$ is built from the canonical trace on $R$ via left and right $R$-valued inner products on the subspaces of bounded vectors (see \cite{MR1424954,MR3040370,ClaireSorinII_1,1704.02035}).
Often, one restricts to \emph{spherical/extremal} bimodules where the canonical unitary dual functor agrees with this tracial one.

Notice $\bfBim(R)$ admits a grading by $\bbR_{>0}$ given by the ratio of left to right von Neumann dimension.
Whether $\bbR_{>0}$ \emph{is} the universal grading group of $\bfBim(R)$ is a tantalizing open question.
However, this grading is sufficient to understand the difference between the tracial unitary dual functor and unitary pivotal structure, which corresponds to the identity group homomorphism $\bbR_{>0} \to \bbR_{>0}$, and the canonical unitary spherical structure, which corresponds to the trivial group homomorphism under our Theorem \ref{thm:Main} below.
We refer the reader to Example \ref{ex:SphericalAndTracialPivotalStructures} for more details.

A second example is the industry of constructing subfactor planar algebras as planar subalgebras of graph planar algebras \cite{MR1929335,MR2511128,MR2679382,MR2979509,MR2822034,MR3394622,MR3314808,MR3402358,MR3306607,EH3}. 
(Such a realization is always possible for finite depth subfactor planar algebras by \cite{MR2812459}, although this result is not necessary in the construction.
See also \cite{EH3,ModuleEmbedding} for the module embedding theorem.)
By Example \ref{ex:GPA} below, the projection category of the planar algebra of a finite connected bipartite graph $\Gamma$ is dagger equivalent to $\End^\dag(\fdHilb^n)$, the unitary multitensor category of dagger endofunctors of $n$ copies of finite dimensional Hilbert spaces, where $n$ is the number of vertices of $\Gamma$.
The planar algebra gives a particular unitary pivotal structure related to Frobenius-Perron data of $\Gamma$, which does \emph{not} correspond to the canonical unitary spherical structure.
However, one has a canonical \emph{spherical state} on the graph planar algebra \cite[Prop.~3.4]{MR1865703}, which implies any \emph{evaluable} planar subalgebra is a subfactor planar algebra \cite[\S8]{MR1929335}.
We refer the reader to \S\ref{sec:SphericalStates} for more details.

The relevant unitary dual functor and corresponding pivotal structure to explain this second class of examples is provided by \cite{1805.09234} in the context of 2-$\Cstar$-categories, which defines standard/minimal solutions to the conjugate equations \emph{with respect to a particular object $X\in \cC$}.
While providing deeper insight into well-behaved choices of solutions to the conjugate equations, they leave open 
the important 
question of classifying all unitary dual functors.
Notice that although two unitary dual functors are uniquely monoidally naturally isomorphic, this natural isomorphism need not be unitary!
(The unique monoidal natural isomorphism may fail the principle of equivalence for tensor $\Cstar$ categories.)
Hence two unitary dual functors need not be unitarily equivalent.

In this article, we prove the following classification theorem.

\begin{thmalpha}
\label{thm:Main}
Let $\cC$ be a unitary multitensor category.
There are canonical bijections between:
\begin{enumerate}[(1)]
\item
Pseudounitary\,\footnote{
See Footnote \ref{footnote:Pseudounitary} on the previous page for the definition of a pseudounitary pivotal structure.
}
 pivotal structures up to monoidal natural isomorphism.\footnote{
If two pseudounitary pivotal structures are monoidally naturally isomorphic, they are so in a unique way.
}
\item
Unitary dual functors up to unitary monoidal natural isomorphism.\footnote{
If two unitary dual functors are unitarily monoidally naturally isomorphic, they are so in a unique way.
}

\item
Groupoid homomorphisms $\cU \to \bbR_{>0}$, where the latter is viewed as a groupoid with one object.
\end{enumerate}
\end{thmalpha}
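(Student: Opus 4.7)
The plan is to introduce a canonical basepoint, namely the canonical unitary spherical dual functor $\vee_0$ from \cite{MR3342166}, and then express both pseudounitary pivotal structures and unitary dual functors as torsors over the set of groupoid homomorphisms $\cU\to\bbR_{>0}$. I would then prove $(1)\Leftrightarrow(2)$ and $(2)\Leftrightarrow(3)$ as separate bijections.

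The implication $(2)\Rightarrow(1)$ is essentially immediate from the framework: for a unitary dual functor $\vee$, formula \eqref{eq:CanonicalPivotalStructureFromUnitaryDualFunctor} defines a monoidal natural isomorphism $\varphi:\id\Rightarrow\vee\vee$ by the cited Proposition \ref{prop:VeeAndDagCompatibility}, and its left/right quantum dimensions on a simple $c$ are computed to be $\ev_c\circ\ev_c^\dag$ and $\coev_c^\dag\circ\coev_c$, both positive scalars (norms squared), so $\varphi$ is pseudounitary. For the converse $(1)\Rightarrow(2)$, I would start with a pseudounitary pivotal structure, transport it along the unique monoidal natural isomorphism to reduce to the case where the underlying dual functor is $\vee_0$, and then rescale the balanced evaluations and coevaluations by positive real scalars $\lambda_c^{\pm 1/2}$ determined by the ratio of the prescribed pivotal dimensions to the $\vee_0$-dimensions. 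Pseudounitarity guarantees real square roots exist, the rescaled maps still satisfy the zig-zag axioms, and the induced pivotal structure matches the input.

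For $(2)\Leftrightarrow(3)$, given any unitary dual functor $\vee$, the unique monoidal natural isomorphism $\zeta:\vee_0\Rightarrow\vee$ has components which, on a simple $c\in\cC_{ij}$, are scalar multiples of the identity by Schur's lemma. Unitarity of both $\vee_0$ and $\vee$ combined with the positivity of the associated dimensions forces these scalars $\lambda_c$ to lie in $\bbR_{>0}$; monoidality of $\zeta$ forces $\lambda_{a\otimes b}=\lambda_a\lambda_b$, $\lambda_{1_i}=1$, and $\lambda_{c^\vee}=\lambda_c^{-1}$, so $\lambda$ factors through a groupoid homomorphism $\cU\to\bbR_{>0}$ by the universal property of the grading groupoid. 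Conversely, any such homomorphism provides scalars to rescale the balanced solutions underlying $\vee_0$, producing a new unitary dual functor.

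The main obstacle, in both nontrivial directions, is verifying that a rescaling by positive real scalars $(\lambda_c)$ of the canonical spherical solutions actually produces a \emph{unitary} dual functor: that the resulting canonical tensorator $\nu_{a,b}$ is unitary and the dagger-compatibility $f^{\vee\dag}=f^{\dag\vee}$ is preserved. This is exactly where the multiplicativity $\lambda_{a\otimes b}=\lambda_a\lambda_b$ plays a crucial role — the various factors of $\lambda$ appearing in the rescaled tensorator must cancel correctly, which happens precisely because $\lambda$ factors through the universal grading groupoid rather than being an arbitrary function on simples. A related subtlety is verifying the uniqueness claims in the footnotes, which amount to showing that any monoidal natural automorphism of the identity functor which is pseudounitary on components and intertwines the dagger must itself be the identity on each simple.
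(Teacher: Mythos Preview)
Your torsor strategy with the spherical dual $\vee_0$ as basepoint is sound and matches how the paper handles $(1)\Leftrightarrow(3)$ (Remark~\ref{rem:ClassifyingPseudounitaryPivotalStructures} together with Lemma~\ref{lem:MonoidalNaturalAutomorphisms}). For $(2)\Leftrightarrow(3)$ the paper takes a somewhat different route: rather than comparing an arbitrary unitary $\vee$ to $\vee_0$ via the canonical $\zeta$, it introduces the intrinsic notion of a \emph{$\pi$-balanced dual} and proves existence and uniqueness up to unique unitary directly (Proposition~\ref{prop:ExistsUniqueBalancedDual}, Corollary~\ref{cor:ExistsBalancedUnitaryDualFunctors}). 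The map $(2)\to(3)$ is then the ratio $\dim_L^\varphi(c)/\dim_R^\varphi(c)$ of Lemma~\ref{lem:RatioHomormophism}, which is manifestly positive and invariant under unitary equivalence. Your rescaling construction $(3)\to(2)$ is essentially Remark~\ref{rem:PerturbUnitaryDualFunctor}.

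There is a genuine gap in your extraction $(2)\to(3)$. You assert that unitarity of $\vee_0$ and $\vee$ together with positivity of dimensions forces the scalars $\lambda_c$ (the components of $\zeta$ on simples, after identifying dual objects) to lie in $\bbR_{>0}$. This is false. Take $\cC=\Vec_{\bbZ/2}$ with the spherical $\vee_0$ and define $\vee$ by $\coev_g:=i\,\coev_g^0$, $\ev_g:=-i\,\ev_g^0$ on the nontrivial simple $g$ (and unchanged on $1$). One checks directly that the map $f\mapsto\ev_c\circ(\id\otimes f)\circ\ev_c^\dag$ is unchanged (the phase cancels against its conjugate), hence still tracial, and that each $\nu^{\text{new}}_{a,b}$ differs from $\nu^0_{a,b}$ by at most a sign; so $\vee$ is a unitary dual functor by Proposition~\ref{prop:VeeAndDagCompatibility}. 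Yet $\zeta_g=i\,\id_g$, so $\lambda_g=i\notin\bbR_{>0}$. The phase of $\lambda_c$ is simply not constrained by unitarity; only $|\lambda_c|$ is an invariant of the unitary equivalence class of $\vee$, and indeed $\dim_L^\varphi(c)/\dim_R^\varphi(c)=|\lambda_c|^{-4}$. Once you replace your $\lambda_c$ by this ratio (or equivalently pass to a representative of the unitary equivalence class in which all $\lambda_c$ are positive, which always exists), your argument goes through.
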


Here, $\cU$ is the \emph{universal grading groupoid} of $\cC$, which is defined analogously to the universal grading group of a tensor category as in \cite[\S4.14]{MR3242743} (see \S\ref{sec:GradingGroupoid} below for the definition).
We show in Lemma \ref{lem:RatioHomormophism} that from a pseudounitary pivotal structure $\varphi$, we get a groupoid homomorphism by taking ratios of dimensions of simple objects; that is, if a simple $c\in \cC$ is graded by a morphism $g\in \cU$, we get a well defined $\pi \in \Hom(\cU \to \bbR_{>0})$ by 
$$
\pi(g):=
\frac{\dim^\varphi_L(c)}{\dim^\varphi_R(c)}.
$$
Conversely, 
following a suggestion of Andr\'{e} Henriques,
given a $\pi \in \Hom(\cU \to \bbR_{>0})$, we define a canonical \emph{$\pi$-balanced dual functor} $\vee_\pi$ which is unique up to unique unitary monoidal natural isomorphism by finding \emph{$\pi$-balanced solutions} $(\ev^\pi_c, \coev^\pi_c)$ to the conjugate equations which satisfy the zig-zag axioms and 
for all $f\in \cC(c \to c)$ and morphisms $g\in \cU$,
$$
\Psi\left(
\ev^\pi_c \circ (\id_{c^{\vee_\pi}}\otimes f_{g}) \circ (\ev^\pi_c)^\dag
\right)
=
\pi(g)
\cdot
\Psi\left(
(\coev^\pi_c)^\dag \circ (f_{g}\otimes \id_{c^{\vee_\pi}}) \circ \coev^\pi_c
\right)
$$
where $\Psi$ the linear functional in $\cC(1\to 1) \to \bbC$ which sends every minimal projection to $1_\bbC$, $f_g\in \cC(c_g \to c_g)$ is the $g$-homogeneous component of $f$, and $c_g$ is the $g$-graded component of $c\in \cC$.
This proof is similar to \cite[Lem.~3.9]{MR2091457} and \cite[Prop.~2.2.15]{MR3204665}.

Unitary dual functors and pivotal structures are closely related to the more general notion of \emph{bi-involutive structure} from \cite[\S2.1]{MR3663592}.
An \emph{involution} on a multitensor category \cite{MR2861112} is a conjugate-linear anti-monoidal functor $(\overline{\,\cdot\,}, \nu) : \cC \to \cC$ together with a monoidal natural isomorphism $\varphi : \id_\cC \Rightarrow \overline{\overline{\,\cdot\,}}$. 
When $\cC$ is unitary, we call $(\overline{\,\cdot\,}, \nu,\varphi)$ \emph{bi-involutive} if $(\overline{\,\cdot\,}, \nu)$ is an anti-monoidal dagger functor and $\varphi$ is unitary.
One obtains a bi-involutive structure from a unitary dual functor $\vee$ and its canonical unitary pivotal structure $\varphi$ by simply forgetting the evaluation and coevaluation maps.

Motivated by the example $\bfBim(R)$ above and \cite[Rem.~2.14]{1704.02035} (see also Example \ref{ex:SphericalAndTracialPivotalStructures}), we prove the following somewhat surprising result in \S\ref{sec:Bi-involutive}.

\begin{coralpha}
\label{cor:UniqueBi-involutive}
Any two bi-involutive structures on a unitary multitensor category induced by unitary dual functors are canonically unitarily equivalent.
\end{coralpha}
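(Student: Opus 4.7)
The plan is to leverage Theorem~\ref{thm:Main} directly. Let $(\vee_1,\nu_1,\varphi_1)$ and $(\vee_2,\nu_2,\varphi_2)$ be two bi-involutive structures on $\cC$ induced by unitary dual functors, corresponding under Theorem~\ref{thm:Main} to groupoid homomorphisms $\pi_1,\pi_2:\cU\to\bbR_{>0}$. Let $\zeta:\vee_1\Rightarrow\vee_2$ denote the unique monoidal natural isomorphism between them described in the introduction; in general $\zeta$ is not unitary, and the goal is to produce a canonical scalar rescaling of $\zeta$ which is.

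First I would analyze $\zeta$ on simple objects. For a simple $c\in\cC$ graded by $g\in\cU$, both $c^{\vee_1}$ and $c^{\vee_2}$ are simples lying in the same isomorphism class, so after fixing any unitary isomorphism between them the morphism $\zeta_c$ becomes a nonzero scalar. The positive real number
$$
\alpha_g \;:=\; \zeta_c^{\dag}\circ\zeta_c \;\in\;\bbR_{>0}
$$
is independent of this choice and of the chosen representative of the simple class, and the monoidality of $\zeta$ combined with multiplicativity of the grading ($c_1\otimes c_2$ is graded by $g_1 g_2$) shows that $g\mapsto \alpha_g$ is a well-defined groupoid homomorphism $\cU\to\bbR_{>0}$. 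Using the definition of the canonical pivotal structure from \eqref{eq:CanonicalPivotalStructureFromUnitaryDualFunctor} and the formula for $\pi_i(g)$ as the ratio of left and right quantum dimensions of a simple graded by $g$, a direct calculation should give $\alpha_g = \pi_1(g)/\pi_2(g)$ up to conventions. This identification of $\alpha_g$ is the main technical step and the principal obstacle.

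Second, since $\bbR_{>0}$ is uniquely $2$-divisible, the map $\sqrt{\alpha}:\cU\to\bbR_{>0}$ is again a groupoid homomorphism. Define
$$
u_c \;:=\; \alpha_g^{-1/2}\,\zeta_c
$$
for each simple $c$ graded by $g\in\cU$, and extend linearly and by semisimplicity to all of $\cC$. Multiplicativity of the correction factor $\sqrt{\alpha}^{\,-1}$ under the grading keeps $u$ monoidal, while $u_c^\dag\circ u_c=\id$ by construction, so $u$ is unitary.

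Finally I would verify that $u$ intertwines the double-involution isomorphisms $\varphi_1$ and $\varphi_2$, i.e., defines a morphism of bi-involutive structures. For the unrescaled $\zeta$ the compatibility square commutes by uniqueness of monoidal natural transformations between dual functors together with the fact that $\varphi_i$ is the canonical pivotal structure built from $\vee_i$; the rescaling factor then appears symmetrically on both sides of the square because in the grading groupoid $\cU$ the double dual of a $g$-graded simple is again $g$-graded, so the two appearances of $\alpha_g^{-1/2}$ cancel. Canonicality and uniqueness of $u$ follow from the uniqueness clause in Theorem~\ref{thm:Main}: any two unitary monoidal natural isomorphisms between two unitary dual functors agree, so the bi-involutive equivalence $u$ is the unique such one.
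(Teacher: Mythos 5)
Your construction of the canonical unitary equivalence is essentially the paper's: both rescale the unique monoidal natural isomorphism $\zeta$ between the two dual functors by strictly positive real factors on homogeneous objects, using multiplicativity of the factor over the grading to preserve monoidality. Two corrections are needed.

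First, a minor one: the positive scalar $\alpha_g := \zeta_c^\dag\circ\zeta_c$ is not $\pi_1(g)/\pi_2(g)$ but rather $(\pi_1(g)/\pi_2(g))^{1/2}$. This follows from Remark~\ref{rem:PerturbUnitaryDualFunctor}: the $\pi_i$-balanced evaluation and coevaluation maps are obtained from the balanced ones by rescaling with $\pi_i(g)^{\pm 1/4}$, so the unique intertwiner $\zeta_c$ differs from the unitary balanced intertwiner by $(\pi_1(g)/\pi_2(g))^{1/4}$, and $\zeta_c^\dag\zeta_c$ picks up the square. This does not damage your construction, since $u_c := \alpha_g^{-1/2}\zeta_c$ is unitary regardless of how $\alpha_g$ relates to $\pi_1,\pi_2$; indeed the explicit identification you call the ``main technical step and principal obstacle'' is not actually needed for the argument.

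The second point is a genuine gap. Your last paragraph claims uniqueness of $u$, invoking the uniqueness clause of Theorem~\ref{thm:Main}. That uniqueness concerns unitary monoidal natural isomorphisms of \emph{dual functors}, which must also intertwine evaluations and coevaluations --- and your rescaled $u$ does not (that is precisely why it can be unitary when $\pi_1\neq\pi_2$). In fact the paper's Remark~\ref{rem:AbsenceOfCanonicalUnitaryEquivalenceForBiInvolutive} states explicitly that the set of unitary equivalences of bi-involutive structures, when nonempty, is a torsor over $\Hom(\cU^{\text{op}}\to U(1))$, so $u$ is certainly \emph{not} unique. The statement of the corollary is ``canonically unitarily equivalent,'' not uniquely; the canonicity comes from choosing the unique positive-real rescaling of $\zeta$, which is distinguished among all rescalings because $\bbR_{>0}\cap U(1)=\{1\}$. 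Your construction does exhibit this canonical choice, but your justification of why it is distinguished (and your stronger claim of uniqueness) are both incorrect and should be replaced by this observation.
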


As an application of Theorem \ref{thm:Main}, we now understand the unitary version of the folklore correspondence between shaded planar algebras and pivotal multitensor categories with a choice of generator \cite{MR2559686,MR2811311,1207.1923,MR3405915,1607.06041}.

\begin{thmalpha}
\label{thm:UnitaryPAequivalence}
There is an equivalence of categories\,\footnote{
Here we suppress a subtlety about contractible 2-categories; see Footnote \ref{footnote:Truncate2Category} in Theorem \ref{thm:PAEquivalence} for details.} 
\[
\left\{\, 
\parbox{4.5cm}{\rm Shaded planar $\Cstar$ algebras $\cP_\bullet$ with finite dimensional box spaces $\cP_{n,\pm}$}\,\left\}
\,\,\,\,\cong\,\,
\left\{\,\parbox{9.1cm}{\rm Triples $(\cC, \vee, X)$ with $\cC$ a unitary multitensor category, $\vee$ a unitary dual functor, and a generator $X\in \cC$ with an orthogonal decomposition $1_\cC = 1_+ \oplus 1_-$ such that $X = 1_+\otimes X \otimes 1_-$}\,\right\}.
\right.\right.
\]
Here, we call $X\in \cC$ a generator if every object of $\cC$ is isomorphic to a direct summand 
of alternating tensor powers of $X$ and $X^\vee$.
\end{thmalpha}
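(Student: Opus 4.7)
The plan is to upgrade the folklore non-unitary correspondence of \cite{MR2559686,MR2811311,1207.1923,MR3405915,1607.06041} to the unitary setting by using Theorem \ref{thm:Main} to match dagger structures on both sides.

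First, I would construct the functor from triples to planar algebras. Given $(\cC, \vee, X)$ with $1_\cC = 1_+ \oplus 1_-$ and $X = 1_+\otimes X \otimes 1_-$, define box spaces
\[
\cP_{n,+} := \cC\bigl(1_+ \to (X\otimes X^\vee)^{\otimes n}\bigr),
\qquad
\cP_{n,-} := \cC\bigl(1_- \to (X^\vee\otimes X)^{\otimes n}\bigr).
\]
The action of a shaded planar tangle is defined by interpreting each strand as $X$ or $X^\vee$ depending on shading, each cap/cup as the $\ev/\coev$ maps of the unitary dual functor $\vee$, and tensoring/composing using $\nu$. The finite-dimensionality of $\cP_{n,\pm}$ follows from $\cC$ being a multitensor category. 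The $*$-operation on each $\cP_{n,\pm}$ is defined by combining the dagger with vertical reflection, using the canonical unitary pivotal structure from Theorem \ref{thm:Main} (more precisely, from the fact that $\vee$ is unitary) to identify the result as an element of the same box space. Positivity $f^* \cdot f \ge 0$ in each box space follows from $f^\dag \circ f$ being a positive morphism in the $\Cstar$ category $\cC$, combined with the fact that the trace arising from closing up a diagram is a positive functional (this reduces to $\ev\circ \ev^\dag$ and $\coev^\dag\circ\coev$ being positive scalars on simple summands, which holds precisely because $\vee$ is unitary). This yields a shaded planar $\Cstar$ algebra.

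Second, I would construct the inverse functor. Given a shaded planar $\Cstar$ algebra $\cP_\bullet$ with finite-dimensional box spaces, form the projection (Karoubi) completion $\cC := \Proj(\cP_\bullet)$ whose objects are projections in some $\cP_{n,\pm}$ and morphisms are boxes intertwining them. The unit decomposes as $1_\cC = 1_+ \oplus 1_-$ via the two empty diagrams; the generator $X$ is the single strand in $\cP_{1,+}$. The dagger is inherited from the planar algebra $*$-operation. For the dual functor, take $X^\vee$ to be the single strand in $\cP_{1,-}$ (with the opposite shading), and define $\ev_X, \coev_X$ by the appropriate cup/cap tangles; extend to all objects by choosing compatible duals for projections via functoriality. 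Unitarity of the tensorator $\nu$ and the dagger-compatibility $f^{\vee\dag} = f^{\dag\vee}$ are direct consequences of the planar axioms (isotopy invariance plus compatibility of $*$ with reflection), so $\vee$ is a unitary dual functor. Semisimplicity, rigidity, and all the multitensor $\Cstar$ axioms follow from the $\Cstar$ structure and finite-dimensionality of box spaces (cf.\ \cite{MR2091457,MR3342166}).

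Third, I would verify the two functors are mutually quasi-inverse and extend to morphisms to give an equivalence of (appropriately truncated) 2-categories. The composition $\cP_\bullet \mapsto (\cC,\vee,X) \mapsto \cP_\bullet'$ yields an isomorphism $\cP_\bullet \cong \cP_\bullet'$ because $X$ generates $\cC$, so every box space is recovered as a hom space involving tensor powers of $X$ and $X^\vee$. The composition $(\cC,\vee,X) \mapsto \cP_\bullet \mapsto (\cC',\vee',X')$ yields an equivalence because the projection completion of the planar algebra recovers the subcategory generated by $X$, which is all of $\cC$.

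The main obstacle is matching $\Cstar$ positivity on the planar-algebra side precisely with the unitarity of the dual functor on the categorical side; in the non-unitary correspondence a pivotal structure suffices, but here one must check that the planar $\Cstar$ axiom forces the dual functor constructed from cups and caps to be dagger-compatible and have unitary tensorator, and conversely that a unitary dual functor produces positive-definite sesquilinear forms on every box space. This is where Proposition \ref{prop:VeeAndDagCompatibility} (i.e., \cite[Lem.~7.5]{MR2767048}) does the essential work, ensuring the $*$-operation constructed from $\dag$ and rotation is an involution that squares to the identity on box spaces. Once this is verified, checking all planar tangle relations reduces to graphical calculus as in the non-unitary case, and the footnoted 2-categorical subtleties are handled exactly as in Theorem \ref{thm:PAEquivalence}.
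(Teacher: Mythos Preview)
Your approach matches the paper's: both directions of the correspondence are spelled out as in \S4.1 (projection category one way, hom-space planar algebra the other), relying on the folklore Theorem \ref{thm:PAEquivalence} together with the dictionary entry ``$\Cstar$'' $\leftrightarrow$ ``$\Cstar$ with unitary pivotal structure.'' The only cosmetic difference is that the paper takes $\cP_{n,+} := \cC(X^{{\rm alt}\otimes n} \to X^{{\rm alt}\otimes n})$ as endomorphism spaces (Definition \ref{defn:PAFromCategory}), so that the $*$-operation is literally $\dag$ with no pivotal bending required, which streamlines exactly the positivity/compatibility check you flag as the main obstacle.
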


As mentioned earlier, for a chosen generator $X\in \cC$ such that $1_\cC = 1_+ \oplus 1_-$ and $X= 1_+\otimes X \otimes 1_-$ as in Theorem \ref{thm:UnitaryPAequivalence}, the canonical standard/minimal solutions to the conjugate equations with respect to $X\in \cC$ from \cite{1805.09234} give a canonical unitary dual functor which makes both loop moduli identical scalars in the corresponding shaded planar $\Cstar$ algebra:
$$
\begin{tikzpicture}[baseline=-.1cm]
	\draw[shaded] (0,0) circle (.3cm);
\end{tikzpicture}
=
d_X
\id_{1_+}
\qquad\qquad
\begin{tikzpicture}[baseline=-.1cm]
	\fill[shaded, rounded corners=5] (-.6,-.6) rectangle (.6,.6);
	\draw[fill=white] (0,0) circle (.3cm);
\end{tikzpicture}
=
d_X 
\id_{1_-}.
$$
There is also a canonical unitary dual functor giving a unitary version of the \emph{lopsided} convention from \cite[\S1.1]{MR3254427} which has been instrumental for constructing many subfactor planar algebras as planar subalgebras of graph planar algebras. 
We refer the reader to \S\ref{sec:ScalarLoops} for more details.

As a final application, in \S\ref{sec:SphericalStates}, we `correct' for some non-spherical choices of unitary pivotal structure on a unitary multitensor category $\cC$.
If $\cC$ is faithfully graded by $\cM_r$, the groupoid with $r$ objects and exactly one isomorphism between any two objects, then any groupoid homomorphism $\pi : \cM_r \to \bbR_{>0}$ induces a unitary pivotal structure on $\cC$ by Theorem \ref{thm:Main} and universality of $\cU$.
As usual, picking $\pi=1$ gives the canonical unitary spherical structure.

\begin{thmalpha}
\label{thm:UniqueSphericalFaithfulState}
Suppose $\dim(\cC(1_\cC \to 1_\cC)) = r$ and $\cC$ is faithfully graded by $\cM_r$.
For each $\pi \in \Hom(\cM_r \to \bbR_{>0})$, there exists a unique faithful state $\psi^\pi : \cC(1_\cC\to 1_\cC) \to \bbC$ such that for all $c\in \cC$ and $f\in \cC(c\to c)$, $\psi^\pi(\tr^\pi_L(f)) = \psi^\pi(\tr^\pi_R(f))$.
\end{thmalpha}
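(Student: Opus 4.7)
The plan is to use the corner decomposition $1_\cC = 1_1\oplus\cdots\oplus 1_r$ to identify $\cC(1_\cC\to 1_\cC)\cong \bbC^r$, so any faithful state $\psi^\pi$ is determined by $r$ strictly positive numbers $t_i := \psi^\pi(\id_{1_i})$ subject to $\sum_i t_i = 1$. I would then reduce the sphericality condition to a linear system on the $t_i$ and check that it has a unique solution.

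First, by semisimplicity of $\cC$ it suffices to check $\psi^\pi(\tr_L^\pi(f)) = \psi^\pi(\tr_R^\pi(f))$ on simple $c$ with $f=\id_c$, plus a linearity argument via $\cM_r$-homogeneous decomposition to extend to arbitrary $f$. Every simple $c\in\cC$ is homogeneous, living in some corner $\cC_{ij} := 1_i\otimes\cC\otimes 1_j$ corresponding to the unique grade $g_{ij}\in\cM_r(i\to j)$. Since the conjugate $c^{\vee_\pi}$ lies in the opposite corner, $\tr_L^\pi(\id_c)\in \cC(1_j\to 1_j) = \bbC\cdot\id_{1_j}$ and $\tr_R^\pi(\id_c)\in \cC(1_i\to 1_i) = \bbC\cdot\id_{1_i}$, with scalar parts $\dim_L^\pi(c)$ and $\dim_R^\pi(c)$ recovered by applying $\Psi$. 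By the defining $\pi$-balanced property of $\vee_\pi$ (equivalently Lemma~\ref{lem:RatioHomormophism} together with Theorem~\ref{thm:Main}), $\dim_L^\pi(c)/\dim_R^\pi(c) = \pi(g_{ij})$, so sphericality on this simple collapses to the single equation
\begin{equation*}
t_j\,\pi(g_{ij}) \;=\; t_i.
\end{equation*}

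Next, I would invoke the faithful-grading hypothesis: for \emph{every} ordered pair $(i,j)$ there is at least one simple in $\cC_{g_{ij}}$, so the full constraint is $t_i = \pi(g_{ij})\,t_j$ for all $i,j$. Consistency is automatic from $\pi$ being a groupoid homomorphism ($\pi(g_{ii}) = 1$ and $\pi(g_{ij})\pi(g_{jk}) = \pi(g_{ik})$), so fixing a base index (say $1$) gives $t_i = \pi(g_{i1})\,t_1$, and the normalization $\sum_i t_i = 1$ forces $t_1 = 1/\sum_i \pi(g_{i1})$. All $t_i$ are strictly positive because $\pi$ takes values in $\bbR_{>0}$, yielding a unique candidate faithful state $\psi^\pi$; this proves uniqueness.

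Finally, for existence I would verify the sphericality identity for arbitrary $c$ and $f$ by decomposing $f$ into its $\cM_r$-homogeneous components $f_g$ (each supported on the $g$-graded summand of $c$), applying $\tr_L^\pi$ and $\tr_R^\pi$ componentwise, and combining the $\pi$-balancing relation $\Psi(\tr_L^\pi(f_g)) = \pi(g)\,\Psi(\tr_R^\pi(f_g))$ with the identity $t_j\pi(g_{ij}) = t_i$ established above to conclude $\psi^\pi(\tr_L^\pi(f_g)) = \psi^\pi(\tr_R^\pi(f_g))$ for each $g$; linearity of all three functionals then sums these equalities to the desired identity. The only real obstacle is bookkeeping: tracking which corner $\tr_L^\pi$ and $\tr_R^\pi$ land in and the corresponding direction of $g_{ij}$ in $\cM_r$ so that the constraints $\{t_i = \pi(g_{ij})t_j\}_{i,j}$ are indeed both necessary and sufficient. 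The essential mathematical input is the $\pi$-balanced structure produced by Theorem~\ref{thm:Main}, after which the argument is a short consistency check.
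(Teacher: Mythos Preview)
Your proposal is correct and follows essentially the same route as the paper: both derive the constraint $\psi^\pi(p_i)/\psi^\pi(p_j)=\pi(E_{ij})$ from sphericality on simples, check consistency via the groupoid homomorphism property of $\pi$ to pin down the unique faithful state, and then extend to arbitrary $f$ by semisimplicity. The only cosmetic difference is that the paper packages uniqueness via the bijections of Fact~\ref{fact:BijectionForMatrixUnitHomomorphisms} and Lemma~\ref{lem:RatioOfDimensionsBijection} and handles existence through an isometry/traciality argument on isotypic components, whereas you solve the linear system directly and invoke the $\pi$-balancing relation \eqref{eq:PhiBalancing} on $\cM_r$-homogeneous pieces; these are equivalent.
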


This theorem generalizes the existence of the spherical state on the bipartite graph planar algebra from \cite[Prop.~3.4]{MR1865703} which allows one to construct subfactor planar algebras by finding evaluable planar subalgebras.
There is also a notion of a spherical state with respect to an object $X\in \cC$ from \cite[(7.9)]{1805.09234}; we explain the relation between the two conventions in Example \ref{ex:SphericalFaithfulStateWithRespectToX}.

\paragraph{Acknowledgements.}
The author would like to thank Andr\'{e} Henriques, Corey Jones, and Noah Snyder for many helpful conversations without which this article would not have been possible.
The author would also like to thank Marcel Bischoff, Ian Charlesworth, Sam Evington, Luca Giorgetti, and Cris Negron, a.k.a.~the `Unitary Group' from the 2018 AMS MRC on Quantum Symmetries: Subfactors and Fusion Categories, supported by NSF DMS grant 1641020, for an extremely helpful and clarifying week of discussion on related topics.
The author was supported by NSF DMS grants 1500387/1655912 and 1654159.

\section{Pivotal structures}
\label{sec:PivotalStructures}

In what follows $\cC$ denotes a $\bbC$-linear category.
We write $c\in \cC$ to denote $c$ is an object of $\cC$ and we write $\cC(a\to b)$ for $\Hom_\cC(a,b)$.

\begin{defn}[{\cite[Def.~4.1.1]{MR3242743}}]
A multitensor category is a locally finite $\bbC$-linear abelian rigid monoidal category such that $\otimes: \cC \times \cC \to \cC$ is bilinear.
We call $\cC$ \emph{indecomposable} if it is not equivalent to the direct sum of two nonzero multitensor categories.
If $\cC(1_\cC \to 1_\cC)$ is one-dimensional, i.e., $1_\cC$ is \emph{simple}, we call $\cC$ a tensor category.
\end{defn}

We refer the reader to \cite{MR3242743} for basic background material on multitensor categories.
To ease the notation, whenever possible, we suppress the associator and unitor natural isomorphisms.
All results on pivotal categories in \S\ref{sec:PivotalCategories} -- \ref{sec:PivotalFunctors} are well known to experts.
We provide some proofs for completeness and convenience.

\subsection{Pivotal categories}
\label{sec:PivotalCategories}

We start by recalling the standard definition of a dual functor and a pivotal category.  
For this section, $\cC$ is a rigid monoidal category.
This means for each $c\in \cC$, there exists a dual object $c^\vee\in \cC$ together with evaluation and coevaluation morphisms $\ev_c, \coev_c$ which satisfy the zig-zag axioms, and that for each $c\in \cC$, there is a $c_\vee\in \cC$ such that $(c_\vee)^\vee \cong c$. 

\begin{defn}
A choice of dual $(c^\vee, \ev_c, \coev_c)$ for each $c\in\cC$ assembles into a \emph{dual functor}, which is a strong monoidal functor 
$\vee : \cC \to \cC^{\text{mop}}$\footnote{We use the notation of \cite{1312.7188}; $\cC^{\text{mop}}$ denotes the category obtained from $\cC$ by reversing arrows and reversing the order of tensor product. 
In other words, $\vee: \cC \to \cC$ is contravariant and anti-monoidal.}
defined on $f : \cC(c \to d)$ by
$$
f^\vee 
:=
\begin{tikzpicture}[baseline=-.1cm]
	\draw (0,.3) arc (0:180:.3cm) -- (-.6,-.8);
	\draw (0,-.3) arc (-180:0:.3cm) -- (.6,.8);
	\roundNbox{fill=white}{(0,0)}{.3}{0}{0}{$f$}
	\node at (-.8,-.6) {\scriptsize{$d^\vee$}};
	\node at (.8,.6) {\scriptsize{$c^\vee$}};
\end{tikzpicture}
=
(\ev_d\otimes \id_{c^\vee})
\circ
(\id_{d^\vee}\otimes f\otimes \id_{c^\vee})
\circ
(\id_{d^\vee}\otimes \coev_c).
$$
A dual functor $\vee$ comes with a canonical tensorator $\nu=\{\nu_{a,b}: a^\vee \otimes_{\cC^{\text{mop}}} b^\vee :=b^\vee \otimes a^\vee \to (a\otimes b)^\vee \}$ given by
\begin{equation}
\label{eq:CanonicalTensorator}
\begin{split}
\nu_{a,b} &:=
\begin{tikzpicture}[baseline=-.1cm]
	\draw (.8,.6) -- (.8,0) arc (0:-180:.4cm) arc (0:180:.2cm) -- (-.4,-.6);
	\draw (.7,.6) -- (.7,0) arc (0:-180:.3cm) arc (0:180:.5cm) -- (-.9,-.6);
	\node at (1.4,.4) {\scriptsize{$(a\otimes b)^\vee$}};
	\node at (-.6,-.4) {\scriptsize{$a^\vee$}};
	\node at (-1.1,-.4) {\scriptsize{$b^\vee$}};
\end{tikzpicture}
\\&=
(\ev_b\otimes \id_{(a\otimes b)^\vee})
\circ 
(\id_{b^\vee}\otimes \ev_a \otimes \id_b \otimes \id_{(a\otimes b)^\vee})
\circ
(\id_{b^\vee\otimes a^\vee} \otimes \coev_{a\otimes b})
\end{split}
\end{equation}
and unit isomorphism $r:=\coev_{1_\cC}: 1 \to 1^\vee$.
In what follows, we suppress $r$ to ease the notation.
\end{defn}

\begin{remark}
\label{rem:CoevDeterminesEv}
Given a dual $(c^\vee, \ev_c, \coev_c)$ of $c\in \cC$, the morphism $\ev_c$ is completely determined by $\coev_c$.
(Similarly, $\ev_c$ completely determines $\coev_c$.)
Hence if $(c^\vee_i, \ev_c^i, \coev_c^i)$ for $i=1,2$ are two duals of $c$ and if there is a $\zeta_c\in \cC(c_2^\vee \to c_1^\vee)$ such that $(\id_{c}\otimes \zeta_c)\circ \coev_c^2 = \coev_c^1$, then
\begin{equation}
\label{eq:CanonicalIntertwiner}
\zeta_c
=
\begin{tikzpicture}[baseline=-.1cm]
	\draw[thick, red] (-.4,-.4) -- (-.4,0) arc (180:0:.2cm);
	\draw[thick] (.4,.4) -- (.4,0) arc (0:-180:.2cm);
	\node at (-.6,-.2) {\scriptsize{$c_2^\vee$}};
	\node at (.6,.2) {\scriptsize{$c_1^\vee$}};
\end{tikzpicture}
= 
(\ev_c^2 \otimes \id_{c_1^\vee})\circ (\id_{c^\vee_2} \otimes \coev_c^1)
\end{equation}
which is necessarily invertible, and $\ev_c^1 \circ (\zeta_c\otimes \id_c) = \ev_c^2$ as well.
Hence any two choices of dual functor are \emph{uniquely} monoidally naturally isomorphic.

Moreover, given a dual functor $\vee$, the tensorator $\nu$ from \eqref{eq:CanonicalTensorator} above is \emph{not} part of the data of $\vee$, as it is the unique isomorphism $\zeta_{a\otimes b}$ for the two duals
$((a\otimes b)^\vee, \ev_{a\otimes b}, \coev_{a\otimes b})$ 
and 
$(
b^\vee\otimes a^\vee
,
\ev_b \circ (\id_{b^\vee}\otimes \ev_a\otimes \id_b)
, 
(\id_a \otimes \coev_b\otimes \id_{a^\vee})\circ\coev_a
)$.
\end{remark}

\begin{defn}
A \emph{pivotal structure} on a rigid monoidal category $\cC$ is a pair $(\vee, \varphi)$ where $\vee: \cC \to \cC^{\rm mop}$ is a dual functor and $\varphi: \id\Rightarrow \vee\circ \vee$ is a monoidal natural isomorphism.
This means $\varphi = \{\varphi_c : c \to c^{\vee\vee}\}$ is a collection of natural isomorphisms such that for all $a,b\in \cC$,
the following diagram commutes:
\begin{equation}
\label{eq:EquivalentPivotalStructures}
\begin{tikzcd}
a\otimes b \ar[rr, "\varphi_a\otimes\varphi_b"] \ar[d, "\varphi_{a\otimes b}"]  
&& 
(a\otimes b)^{\vee\vee}
\ar[d, "(\nu_{b^\vee, a^\vee})^\vee"]
\\
a^{\vee\vee} \otimes b^{\vee\vee}
\ar[rr, "\nu_{a^\vee, b^\vee}"]
&& 
(b^\vee \otimes a^\vee)^\vee
\end{tikzcd}
\end{equation}
A \emph{pivotal category} is a rigid monoidal category equipped with a pivotal structure.

Two pivotal structures $(\vee_1,\varphi^1)$ and $(\vee_2, \varphi^2)$ on $\cC$ are \emph{equivalent} if for every $c\in \cC$, the following diagram commutes:
\begin{equation}
\label{eq:EquivalentPivotalStructures}
\begin{tikzcd}
c \ar[rr, "\varphi^1_c"] \ar[drr, "\varphi^2_c"]  
&& 
c_1^{\vee\vee}
\\
&& c_2^{\vee\vee}\ar[u, "{}"]
\end{tikzcd}
\begin{tikzpicture}[baseline=-.1cm]
	\draw[thick, red] (-.4,-.4) -- (-.4,0) arc (180:0:.3cm) arc (0:-180:.1cm);
	\draw[thick] (.4,.4) -- (.4,0) arc (0:-180:.3cm) arc (180:0:.1cm);
	\node at (-.7,-.2) {\scriptsize{$c_2^{\vee\vee}$}};
	\node at (.7,.2) {\scriptsize{$c_1^{\vee\vee}$}};
\end{tikzpicture}
\end{equation}
\end{defn}

\begin{rem}
\label{rem:ClassifyingPivotalStructures}
If $\cC$ has a pivotal structure, then the equivalence classes of pivotal structures on $\cC$ form a torsor for the group $\Aut_\otimes(\id_\cC)$ of monoidal natural automorphisms of the identity functor of $\cC$ \cite[Ex.~4.7.16]{MR3242743}.
\end{rem}

\begin{defn}
Given a pivotal category $(\cC,\varphi)$, we define the left and right trace on $\cC(c\to c)$ for each $c\in \cC$ by
\begin{equation}
\label{eq:DiagrammaticPivotalTraces}
\tr_L^\varphi(f) 
:=
\begin{tikzpicture}[baseline=-.1cm]
	\draw (0,.85) arc (0:180:.3cm) -- (-.6,-.85) arc (-180:0:.3cm) -- (0,.85);
	\roundNbox{fill=white}{(0,.5)}{.35}{0}{0}{$f$}
	\roundNbox{fill=white}{(0,-.5)}{.35}{0}{0}{$\varphi_c^{-1}$}
	\node at (-.9,0) {\scriptsize{$c^\vee$}};
	\node at (.2,0) {\scriptsize{$c$}};
	\node at (.2,1) {\scriptsize{$c$}};
	\node at (.3,-1) {\scriptsize{$c^{\vee\vee}$}};
\end{tikzpicture}
\qquad\qquad
\tr_R^\varphi(f) 
:=
\begin{tikzpicture}[baseline=-.1cm]
	\draw (0,.85) arc (180:0:.3cm) -- (.6,-.85) arc (0:-180:.3cm) -- (0,.85);
	\roundNbox{fill=white}{(0,.5)}{.35}{0}{0}{$\varphi_c$}
	\roundNbox{fill=white}{(0,-.5)}{.35}{0}{0}{$f$}
	\node at (.9,0) {\scriptsize{$c^\vee$}};
	\node at (-.2,0) {\scriptsize{$c$}};
	\node at (-.2,-1) {\scriptsize{$c$}};
	\node at (-.3,1) {\scriptsize{$c^{\vee\vee}$}};
\end{tikzpicture}
\,.
\end{equation}
\end{defn}

\subsection{Semisimple pivotal categories}
\label{sec:SemisimplePivotal}

For this section, $(\cC, \varphi)$ is a semisimple pivotal multitensor category.
This means $\cC(1_\cC \to 1_\cC)$ is a finite dimensional complex semisimple algebra, and is thus isomorphic to $\bbC^r$ for some $r\in \bbN$.
The next lemma is well known to experts; we include a proof for convenience and completeness.

\begin{lem}
\label{lem:Nondegenerate}
The traces $\tr_L^\varphi$ and $\tr_R^\varphi$ are nondegenerate, i.e., for every nonzero $f\in \cC(a\to b)$, there is a $g\in \cC(b\to a)$ such that $\tr_L(g\circ f) \neq 0$, and similarly for $\tr_R$.
\end{lem}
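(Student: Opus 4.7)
The plan is to reduce nondegeneracy of the trace pairing to nondegeneracy of the composition pairing $\cC(1_\cC \to x) \times \cC(x \to 1_\cC) \to \cC(1_\cC \to 1_\cC)$ via rigidity, and then handle the latter by semisimplicity. For each $a, b \in \cC$, rigidity and invertibility of $\varphi_a$ yield natural bijections
\begin{align*}
\Phi : \cC(a \to b) &\xrightarrow{\sim} \cC(1_\cC \to b\otimes a^\vee), & f &\mapsto (f\otimes \id_{a^\vee})\circ \coev_a, \\
\Psi : \cC(b \to a) &\xrightarrow{\sim} \cC(b\otimes a^\vee \to 1_\cC), & g &\mapsto \ev_{a^\vee}\circ ((\varphi_a\circ g)\otimes \id_{a^\vee}),
\end{align*}
and unpacking the diagrammatic definition of $\tr_R^\varphi$ via the zig-zag axioms gives $\tr_R^\varphi(g\circ f) = \Psi(g)\circ\Phi(f)$ in $\cC(1_\cC \to 1_\cC)$. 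Hence nondegeneracy of the trace pairing for $\tr_R^\varphi$ is equivalent to nondegeneracy in the first variable of the composition pairing with $x = b\otimes a^\vee$. The analogous fold into $\cC(1_\cC \to a^\vee\otimes b)$, using $\varphi^{-1}$, handles $\tr_L^\varphi$.

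By semisimplicity, any object $x$ decomposes as a finite direct sum of simples and the composition pairing splits accordingly, so it suffices to treat $x = \sigma$ simple. Write $1_\cC = \bigoplus_{i=1}^r 1_i$ with inclusions $\iota_i$ and projections $p_i$, so that the $e_i := \iota_i\circ p_i$ are the minimal orthogonal idempotents of $\cC(1_\cC \to 1_\cC) \cong \bbC^r$. Then $\cC(1_\cC \to \sigma) = \bigoplus_i \cC(1_i \to \sigma)$ is nonzero only when $\sigma\cong 1_{i_0}$ for a unique $i_0$, in which case it is one-dimensional; any nonzero $\alpha : 1_\cC \to \sigma$ is then a scalar multiple of $\phi\circ p_{i_0}$ for some iso $\phi : 1_{i_0}\xrightarrow{\sim}\sigma$, and pairing against $\beta := \iota_{i_0}\circ \phi^{-1}$ yields a nonzero scalar multiple of $e_{i_0}$.

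The main technical hurdle is the first step, namely locating $\varphi_a$ in exactly the right place inside $\Psi$ so that $\Psi$ is a bijection and the identity $\tr_R^\varphi(g\circ f) = \Psi(g)\circ\Phi(f)$ holds on the nose. Once that identification is in place, the reduction to the simple case and the explicit pairing check in the second paragraph are routine consequences of semisimple decomposition in a multitensor category.
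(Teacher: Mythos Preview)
Your argument is correct. Both proofs rest on the same core mechanism: convert the trace pairing via rigidity into a composition pairing $\cC(1_\cC \to x)\times \cC(x\to 1_\cC)\to \cC(1_\cC\to 1_\cC)$ and then invoke semisimplicity to see that this pairing is nondegenerate. The difference is in the order of operations. The paper first factors the nonzero $f$ through a simple $c$ (using semisimplicity), applies cyclicity of the trace, and then unfolds $\tr_L$ on the simple $c$ into a composition through $c^\vee\otimes c^{\vee\vee}$, citing an external lemma for the final nonvanishing. You instead unfold the trace first, via the Frobenius-reciprocity bijections $\Phi,\Psi$, into a composition through $x=b\otimes a^\vee$, and only afterwards decompose $x$ into simples and verify the pairing explicitly on each simple summand of $1_\cC$. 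Your route is self-contained and avoids the external citation; the paper's is a line shorter by outsourcing the last step. Otherwise the two arguments are interchangeable.
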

\begin{proof}
Suppose $f\in \cC(a\to b)$ is nonzero.
Then there is a simple $c\in \cC$, a monomorphism $g \in \cC(c\to a)$, and an epimorphism $h\in \cC(b \to c)$ such that $h\circ f\circ g \neq 0$.
Then 
$$
0\neq e := \ev_c \circ [\id_{c^\vee}\otimes (h\circ f\circ g \circ  \varphi_c^{-1})]
\in 
\cC(1 \to c^\vee \otimes c^{\vee\vee}).
$$
Since we also know $0\neq \coev_{c^\vee} \in \cC(c^\vee \otimes c^{\vee\vee})$, 
by \cite[Lem.~A.5]{MR3578212}, 
$$
\tr_L((g\circ h)\circ f) = \tr_L(h\circ f\circ g) = e \circ \coev_{c^\vee} \neq 0.
$$
Hence $\tr_L$ is nondegenerate.
The proof that $\tr_R$ is nondegenerate is similar and left to the reader.
\end{proof}

\begin{defn}
Let $1_\cC = \bigoplus_{i=1}^r 1_i$ be a decomposition into simples, and for $1\leq i\leq r$, let $p_i \in \cC(1_\cC \to 1_\cC)$ be the minimal idempotent corresponding to $1_i$.
For $c\in \cC$ and $f\in \cC(c\to c)$, we define the $M_r(\bbC)$-valued traces $\Tr^\varphi_L$ and $\Tr^\varphi_R$ by the formulas
\begin{equation}
\label{eq:SphericalDefinition}
\begin{split}
(\Tr^\varphi_L(f))_{i,j}
\id_{1_j}
&=
\tr_L^\varphi(p_i \otimes f \otimes p_j) 
=
\begin{tikzpicture}[baseline=-.1cm]
	\draw (0,1) arc (0:180:.5cm) -- (-1,-1) arc (-180:0:.5cm) -- (0,1);
	\roundNbox{fill=white}{(0,.6)}{.35}{0}{0}{$f$}
	\roundNbox{fill=white}{(0,-.6)}{.35}{0}{0}{$\varphi_c^{-1}$}
	\roundNbox{fill=white}{(-.6,0)}{.25}{0}{0}{$p_i$}
	\roundNbox{fill=white}{(.6,0)}{.25}{0}{0}{$p_j$}
	\node at (-1.3,0) {\scriptsize{$c^\vee$}};
	\node at (.15,0) {\scriptsize{$c$}};
	\node at (.15,1.2) {\scriptsize{$c$}};
	\node at (.3,-1.2) {\scriptsize{$c^{\vee\vee}$}};
\end{tikzpicture}
\\
(\Tr^\varphi_R(f))_{i,j}
\id_{1_i}
&=
\tr_R^\varphi(p_i \otimes f \otimes p_j)
=
\begin{tikzpicture}[baseline=-.1cm]
	\draw (0,1) arc (180:0:.5cm) -- (1,-1) arc (0:-180:.5cm) -- (0,1);
	\roundNbox{fill=white}{(0,.6)}{.35}{0}{0}{$\varphi_c$}
	\roundNbox{fill=white}{(0,-.6)}{.35}{0}{0}{$f$}
	\roundNbox{fill=white}{(-.6,0)}{.25}{0}{0}{$p_i$}
	\roundNbox{fill=white}{(.6,0)}{.25}{0}{0}{$p_j$}
	\node at (1.3,0) {\scriptsize{$c^\vee$}};
	\node at (-.15,0) {\scriptsize{$c$}};
	\node at (-.15,-1.2) {\scriptsize{$c$}};
	\node at (-.3,1.2) {\scriptsize{$c^{\vee\vee}$}};
\end{tikzpicture}
\end{split}
\end{equation}
Notice that $\Tr^\varphi_L(f)^T = \Tr^\varphi_R(f^\vee)$ and $\Tr^\varphi_L(f^\vee) = \Tr^\varphi_R(c)^T$ for all $c\in \cC$ and $f\in \cC(c\to c)$.
Moreover, $\Tr^\varphi_L, \Tr^\varphi_R : \cC(c\to c) \to M_r(\bbC)$ are tracial; for all $f\in \cC(c\to d)$ and $g\in \cC(d\to c)$, we have $\Tr^\varphi_L(g\circ f) = \Tr^\varphi_L(f\circ g)$, and similarly for $\Tr^\varphi_R$.
%

We call $(\cC,\varphi)$ \emph{spherical} if for every $c\in\cC$ and $f\in \End_\cC(c)$, $\Tr^\varphi_L(f) = \Tr^\varphi_R(f)$. 

For each $c\in \cC$, we define
$\Dim^\varphi_L(c),\Dim^\varphi_R(c) \in M_r(\bbC)$ by
\begin{equation}
\label{eq:DimensionsInMultitensor}
\Dim^\varphi_L(c) := \Tr^\varphi_L(\id_c)
\qquad
\qquad
\Dim^\varphi_R(c) := \Tr^\varphi_R(c).
\end{equation}
%
Notice that $\Dim^\varphi_L(c)^T = \Dim^\varphi_R(c^\vee)$ and $\Dim^\varphi_L(c^\vee) = \Dim^\varphi_R(c)^T$ for all $c\in \cC$.
Moreover, $\Dim^\varphi_L, \Dim^\varphi_R : K_0(\cC) \to M_r(\bbC)$ are ring homomorphisms.
For each simple $c\in \cC$, the matrices $\Dim^\varphi_L(c)$ and $\Dim^\varphi_R(c)$ have exactly one non-zero entry, which we denote $\dim^\varphi_L(c)$ and $\dim^\varphi_R(c)$ respectively.
\end{defn}

\begin{cor}[{\cite[Prop.~4.8.4]{MR3242743}}]
\label{cor:NonzeroDimensionsForSimples}
For all simple $c\in \cC$, $\dim^\varphi_L(c)\neq 0 \neq \dim^\varphi_R(c)$.
\end{cor}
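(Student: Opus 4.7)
The plan is to deduce this directly from the nondegeneracy of the traces $\tr_L^\varphi$ and $\tr_R^\varphi$ established in Lemma \ref{lem:Nondegenerate}, using the fact that $\End_\cC(c) = \bbC\id_c$ for $c$ simple.

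First, I would record the small lemma that in a semisimple multitensor category, any simple object $c$ satisfies $c = 1_i\otimes c\otimes 1_j$ for a unique pair $(i,j)$: the decomposition $1_\cC = \bigoplus 1_k$ forces $c \cong \bigoplus_{k,\ell} 1_k\otimes c\otimes 1_\ell$, and simplicity picks out a single summand. Consequently $p_i \otimes \id_c \otimes p_j = \id_c$ while $p_{i'}\otimes \id_c\otimes p_{j'} = 0$ for $(i',j')\neq (i,j)$, so $\Tr_L^\varphi(\id_c)$ and $\Tr_R^\varphi(\id_c)$ have exactly one potentially nonzero entry, namely $\dim_L^\varphi(c)$ and $\dim_R^\varphi(c)$ respectively. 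In particular, $\tr_L^\varphi(\id_c) = \dim_L^\varphi(c)\cdot \id_{1_j}$ and $\tr_R^\varphi(\id_c) = \dim_R^\varphi(c)\cdot \id_{1_i}$ up to the identifications from \eqref{eq:SphericalDefinition}.

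Now apply Lemma \ref{lem:Nondegenerate} with $a=b=c$ and $f=\id_c$: since $\id_c\neq 0$, there exists $g\in\cC(c\to c)$ such that $\tr_L^\varphi(g\circ \id_c) = \tr_L^\varphi(g)\neq 0$. By semisimplicity and simplicity of $c$, $\End_\cC(c)=\bbC\id_c$, so $g=\lambda\id_c$ for some scalar $\lambda$. Linearity of $\tr_L^\varphi$ then gives
\[
0\neq \tr_L^\varphi(g) = \lambda\cdot \tr_L^\varphi(\id_c) = \lambda\cdot \dim_L^\varphi(c)\cdot \id_{1_j},
\]
forcing $\dim_L^\varphi(c)\neq 0$. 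The identical argument with $\tr_R^\varphi$ (also nondegenerate by Lemma \ref{lem:Nondegenerate}) yields $\dim_R^\varphi(c)\neq 0$.

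There is essentially no obstacle here; this is a one-line corollary of nondegeneracy combined with $\End_\cC(c)=\bbC\id_c$. The only mild point of care is the bookkeeping identifying the unique potentially nonzero matrix entry of $\Tr_{L/R}^\varphi(\id_c)$ with the scalar $\dim_{L/R}^\varphi(c)$, which is immediate from the definition \eqref{eq:DimensionsInMultitensor} together with the observation that a simple object lies in a unique corner $1_i\otimes \cC\otimes 1_j$.
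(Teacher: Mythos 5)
Your proof is correct and follows the route the paper clearly intends: the result is stated as a corollary immediately after Lemma \ref{lem:Nondegenerate}, and your derivation — applying nondegeneracy of $\tr_L^\varphi$ (resp.\ $\tr_R^\varphi$) to $f = \id_c$, invoking Schur's lemma so that the witness $g$ is a scalar multiple of $\id_c$, and noting that a simple object occupies a single corner so $\tr_L^\varphi(\id_c) = \dim_L^\varphi(c)\,\id_{1_j}$ — is exactly the standard argument (also the one in \cite[Prop.~4.8.4]{MR3242743}). There is no circularity: the proof of Lemma \ref{lem:Nondegenerate} reduces to the nondegeneracy of the composition pairing $\cC(c^\vee\otimes c^{\vee\vee}\to 1)\times\cC(1\to c^\vee\otimes c^{\vee\vee})\to\cC(1\to 1)$, which holds in any semisimple category and makes no reference to quantum dimensions.
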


\begin{defn}
A pivotal structure $(\vee,\varphi)$ on a semisimple multitensor category $\cC$ is called \emph{pseudounitary} if $\dim^\varphi_L(c)>0$ and $\dim^\varphi_R(c) >0$ for all simple $c\in \Irr(\cC)$.
This definition is equivalent to \cite[Def.~9.4.4]{MR3242743} in the context of fusion categories by uniqueness of the Frobenius-Perron dimensions.
\end{defn}

\begin{rem}
\label{rem:ClassifyingPseudounitaryPivotalStructures}
Suppose $\cC$ is a semisimple multitensor category which has a pseudounitary pivotal structure.
Then similar to Remark \ref{rem:ClassifyingPivotalStructures}, the equivalence classes of pseudounitary  pivotal structures on $\cC$ forms a torsor for the subgroup $\Aut^+_\otimes(\id_\cC)$ of $\Aut_\otimes(\id_\cC)$ of \emph{positive} monoidal natural automorphisms of the identity dagger tensor functor, which consists of those monoidal natural isomorphisms $\zeta: \id_\cC \Rightarrow \id_\cC$ such that for every simple $c\in \cC$, $\zeta_c : c\to c$ is a \emph{strictly positve} multiple of $\id_c$.
\end{rem}

\begin{lem}
\label{lem:EquivalentPivotalStructures}
For two pivotal structures $(\vee_1,\varphi^1)$ and $(\vee_2, \varphi^2)$, the following are equivalent:
\begin{enumerate}[(1)]
\item
$(\vee_1,\varphi^1)$ and $(\vee_2, \varphi^2)$ are equivalent.
\item
For all $c \in \cC$ and $f\in \cC(c\to c)$, $\tr_L^1(f) = \tr_L^2(f)$.
\item
For all $c \in \cC$ and $f\in \cC(c\to c)$, $\tr_R^1(f) = \tr_R^2(f)$.
\item
For all simple $c\in \cC$, $\dim_L^1(c) = \dim_L^2(c)$.
\item
For all simple $c\in \cC$, $\dim_R^1(c) = \dim_R^2(c)$.
\end{enumerate}
\end{lem}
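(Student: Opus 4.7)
The plan is to establish the cycle $(1) \Rightarrow (2) \Rightarrow (4) \Rightarrow (1)$; the symmetric cycle $(1) \Rightarrow (3) \Rightarrow (5) \Rightarrow (1)$ follows by identical arguments with left and right interchanged.

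For $(1)\Rightarrow (2)$, I would substitute the canonical intertwiner $\zeta_c$ from Remark \ref{rem:CoevDeterminesEv} into the diagrammatic definition \eqref{eq:DiagrammaticPivotalTraces} of $\tr_L^2$, rewriting it in terms of the cups and caps of $\vee_1$; the equivalence diagram \eqref{eq:EquivalentPivotalStructures} then immediately identifies the resulting expression with $\tr_L^1$. The implication $(2)\Rightarrow (4)$ is immediate by specializing $f=\id_c$ for simple $c$.

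For $(4)\Rightarrow (2)$, I would use semisimplicity of $\cC$. Decomposing $c \cong \bigoplus_\alpha V_\alpha \otimes c_\alpha$ into isotypic components with $c_\alpha \in \Irr(\cC)$ and $V_\alpha$ a multiplicity space, any endomorphism $f \in \cC(c\to c)$ takes the block form $\bigoplus_\alpha A_\alpha \otimes \id_{c_\alpha}$, with off-diagonal pieces between non-isomorphic simples vanishing by Schur's lemma. Traciality of $\tr_L^\varphi$ then yields $\tr_L^\varphi(f) = \sum_\alpha \mathrm{tr}(A_\alpha)\,\dim_L^\varphi(c_\alpha)$ (in the appropriate $M_r(\bbC)$-valued sense), so equality of the left dimensions on all simples propagates to equality of left traces on every endomorphism.

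The main step is $(4) \Rightarrow (1)$. First I would transport $\varphi^2$ to a pivotal structure for $\vee_1$: Remark \ref{rem:CoevDeterminesEv} furnishes a unique monoidal natural isomorphism $\zeta:\vee_2\Rightarrow\vee_1$, which in turn induces a canonical monoidal natural isomorphism $\eta:\vee_2\circ\vee_2\Rightarrow\vee_1\circ\vee_1$ (built from $\zeta$ and its dual, and realized precisely by the nested cups/caps of \eqref{eq:EquivalentPivotalStructures}). Setting $\tilde\varphi^2 := \eta\circ\varphi^2$ gives a pivotal structure on $(\cC,\vee_1)$ equivalent to $(\vee_2,\varphi^2)$ in the sense of \eqref{eq:EquivalentPivotalStructures} and sharing the same dimension data. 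Having reduced to two pivotal structures $\varphi^1,\tilde\varphi^2$ for a common dual functor, their ratio $\chi:=(\varphi^1)^{-1}\circ\tilde\varphi^2 \in \Aut_\otimes(\id_\cC)$ acts on each simple $c$ by a scalar $\lambda_c\in\bbC^\times$. A direct calculation gives $\dim_L^{\tilde\varphi^2}(c) = \lambda_c^{-1}\,\dim_L^{\varphi^1}(c)$; combined with hypothesis $(4)$ and the nonvanishing $\dim_L^{\varphi^1}(c)\neq 0$ from Corollary \ref{cor:NonzeroDimensionsForSimples}, this forces $\lambda_c = 1$ on every simple. Naturality of $\chi$ together with the semisimple decomposition then gives $\chi=\id$, so $\tilde\varphi^2=\varphi^1$ and hence $(\vee_1,\varphi^1)$ and $(\vee_2,\varphi^2)$ are equivalent. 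The main obstacle I anticipate is the bookkeeping in defining $\eta$ and verifying it agrees with the canonical intertwiner between double duals displayed in \eqref{eq:EquivalentPivotalStructures}; once that identification is in hand, the remainder is routine Schur theory plus the nondegeneracy of $\tr_L$ from Lemma \ref{lem:Nondegenerate}.
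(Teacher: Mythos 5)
Your proposal is correct and takes essentially the same approach as the paper: the decisive step $(4)\Rightarrow(1)$ reduces to checking a scalar on each simple object and forces it to be $1$ by comparing left dimensions and invoking their nonvanishing from Corollary \ref{cor:NonzeroDimensionsForSimples}. Your transport of $\varphi^2$ to a pivotal structure $\tilde\varphi^2$ on $\vee_1$ and appeal to the torsor structure is just a repackaging of the paper's direct verification of the identity \eqref{eq:EquivalentPivotalIdentity}; likewise your separate semisimplicity argument for $(4)\Rightarrow(2)$ is logically redundant given your cycle $(1)\Rightarrow(2)\Rightarrow(4)\Rightarrow(1)$, but harmless.
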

\begin{proof}
\mbox{}
\item[\underline{$(1)\Rightarrow (2)$:}]
This is straightforward.
\item[\underline{$(2)\Leftrightarrow (3)$:}]
Note that $\tr_L^1(f) = \tr_R^1(f^\vee)$ and $\tr_L^2(f) = \tr_R^2(f^\vee)$ for all $f\in \cC(c\to c)$.
\item[\underline{$(2)\Rightarrow (4)$:}]
Take $f = \id_c$.
\item[\underline{$(4)\Leftrightarrow (5)$:}]
Note that $\dim_L^1(c) = \dim_R^1(c^\vee_1)$ and $\dim_L^2(c) = \dim_R^2(c_2^\vee)$ for all simple $c\in \cC$.
\item[\underline{$(4)\Rightarrow (1)$:}]
By monoidality of $\varphi^1$, $\varphi^2$, and the canonical intertwining morphism in \eqref{eq:EquivalentPivotalStructures}, $(\vee_1, \varphi^1)$ and $(\vee_2, \varphi^2)$ are equivalent if and only if for all simple $c\in \cC$,
\begin{equation}
\label{eq:EquivalentPivotalIdentity}
(\varphi^2_c)^{-1} \circ
\left(\,
\begin{tikzpicture}[baseline=-.1cm]
	\draw[thick, red] (-.4,-.4) -- (-.4,0) arc (180:0:.3cm) arc (0:-180:.1cm);
	\draw[thick] (.4,.4) -- (.4,0) arc (0:-180:.3cm) arc (180:0:.1cm);
\end{tikzpicture}
\,\right)
\circ
\varphi^1_c
=
\id_c.
\end{equation}
Now the left hand side of \eqref{eq:EquivalentPivotalIdentity} is a scalar multiple of $\id_c$.
By Corollary \ref{cor:NonzeroDimensionsForSimples}, we may determine this scalar by applying $\tr_L^1$ to both sides as $\dim^i_L(c) \neq 0$ for $i=1,2$.
It is straightforward to check that $\tr_L^1$ applied to the left hand side is equal to $\dim^2_L(c)$, which is equal to $\dim^1_L(c)$ by assumption.
Hence \eqref{eq:EquivalentPivotalIdentity} holds.
\end{proof}

\subsection{Pivotal functors}
\label{sec:PivotalFunctors}

\begin{defn}
\label{def:PivotalFunctor}
Given a strong monoidal functor between pivotal categories $(F, \mu) : (\cC, \varphi^\cC) \to (\cD, \varphi^\cD)$, for each $c$ in $\cC$, we have a canonical natural isomorphism $\delta_c : F(c^\vee) \to F(c)^\vee$ given by
\begin{equation}
\label{eq:CanonicalDualIso}
\delta_c
:=
\begin{tikzpicture}[baseline = .3cm]
	\draw (-.5,-1) -- (-.5,0);
	\draw[double] (0,0) -- (0,1.2);
	\draw (.5,-.3) arc (-180:0:.5cm) -- (1.5,1.8);
	\roundNbox{fill=white}{(0,1.2)}{.3}{.5}{.5}{$F(\ev_c)$}
	\roundNbox{fill=white}{(0,0)}{.3}{.7}{.7}{$\mu_{F(c^\vee), F(c)}$}
	\node at (-.9,.6) {\scriptsize{$F(c^\vee\otimes c)$}};
	\node at (-1,-.7) {\scriptsize{$F(c^\vee)$}};
	\node at (.2,-.7) {\scriptsize{$F(c)$}};
	\node at (1.1,.6) {\scriptsize{$F(c)^\vee$}};
\end{tikzpicture}
=
(F(\ev_c) \otimes \id_{F(c)^\vee})\circ (\mu_{F(c^\vee), F(c)}\otimes \id_{F(c)^\vee})\circ (\id_{F(c^\vee)}\otimes \coev_{F(c)})
.
\end{equation}
We call $(F,\mu)$ \emph{pivotal} if for all $c\in \cC$,
\begin{equation}
\label{eqref:PivotalFunctor}
\begin{tikzpicture}[baseline=.6cm]
	\node[box] (F) at (0,0) {$\varphi^\cD_{F(c)}$};
	\node[box] (d) at (0,1.2) {$\delta_{c}^\vee$};
	\draw (F) -- node [left] {\scriptsize $F(c)$} ++(0,-.7);
	\draw (F) -- node [left] {\scriptsize $F(c)^{\vee\vee}$} (d);
	\draw (d) -- node [left] {\scriptsize $F(c^\vee)^\vee$} ++ (0,.7);
\end{tikzpicture}
=
\begin{tikzpicture}[baseline=.6cm]
	\node[box] (F) at (0,0) {$F(\varphi^\cC_c)$};
	\node[box] (d) at (0,1.2) {$\delta_{c^\vee}$};
	\draw (F) -- node [right] {\scriptsize $F(c)$} ++(0,-.7);
	\draw (F) -- node [right] {\scriptsize $F(c^{\vee\vee})$} (d);
	\draw (d) -- node [right] {\scriptsize $F(c^\vee)^\vee$} ++ (0,.7);
\end{tikzpicture}.
\end{equation}
\end{defn}

\begin{lem}
\label{lem:PreserveDimensions}
Suppose $(\cC,\varphi^\cC)$ and $(\cD, \varphi^\cD)$ are pivotal categories 
and 
$(F,\mu): \cC \to \cD$ is a pivotal strong monoidal functor.
Then $(F, \mu)$ preserves the left and right pivotal traces, i.e., for all $f\in \cC(c\to c)$, 
\begin{equation}
\label{eq:PreserveLeftDimension}
F\left(
\begin{tikzpicture}[baseline=-.1cm]
	\draw (0,.85) arc (0:180:.3cm) -- (-.6,-.85) arc (-180:0:.3cm) -- (0,.85);
	\roundNbox{fill=white}{(0,.5)}{.35}{0}{0}{$f$}
	\roundNbox{fill=white}{(0,-.5)}{.35}{0}{0}{$\varphi_c^{-1}$}
	\node at (-.9,0) {\scriptsize{$c^\vee$}};
	\node at (.2,0) {\scriptsize{$c$}};
	\node at (.2,1) {\scriptsize{$c$}};
	\node at (.3,-1) {\scriptsize{$c^{\vee\vee}$}};
\end{tikzpicture}
\right)
=
F(\tr^{\varphi^\cC}_L(f))
=
\tr_L^{\varphi^\cD}(F(f))
=
\begin{tikzpicture}[baseline=-.1cm]
	\draw (0,.85) arc (0:180:.4cm) -- (-.8,-.85) arc (-180:0:.4cm) -- (0,.85);
	\roundNbox{fill=white}{(0,.5)}{.3}{.2}{.2}{$F(f)$}
	\roundNbox{fill=white}{(0,-.5)}{.35}{.2}{.2}{$\varphi_{F(c)}^{-1}$}
	\node at (-1.2,0) {\scriptsize{$F(c)^\vee$}};
	\node at (.3,0) {\scriptsize{$F(c)$}};
	\node at (.3,1) {\scriptsize{$F(c)$}};
	\node at (.5,-1) {\scriptsize{$F(c)^{\vee\vee}$}};
\end{tikzpicture}
\,,
\end{equation}
and similarly for the right pivotal traces.
\end{lem}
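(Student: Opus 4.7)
The plan is a direct string-diagram computation that converts $F$ applied to the left-pivotal-trace picture in $\cC$ into the left-pivotal-trace picture in $\cD$, with all comparison isomorphisms cancelling in pairs. \emph{First}, I would record the basic fact that for any strong monoidal functor $(F,\mu)$, the triple
$(F(c^\vee),\, F(\ev_c)\circ\mu_{c^\vee,c},\, \mu_{c,c^\vee}^{-1}\circ F(\coev_c))$
satisfies the zig-zag axioms and so is a dual of $F(c)$ in $\cD$. By the uniqueness clause in Remark \ref{rem:CoevDeterminesEv}, the morphism $\delta_c$ defined in \eqref{eq:CanonicalDualIso} is the unique intertwiner between this $F$-transported dual and the canonical dual $(F(c)^\vee,\ev_{F(c)},\coev_{F(c)})$; iterating once more, $\delta_c^\vee\circ\delta_{c^\vee}:F(c^{\vee\vee})\to F(c)^{\vee\vee}$ is the unique intertwiner between the $F$-transported double dual and the canonical one.

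\emph{Next}, I would expand $F(\tr^{\varphi^\cC}_L(f))$ using the diagrammatic definition \eqref{eq:DiagrammaticPivotalTraces}, strong monoidality of $F$, and naturality of $\mu$. This produces a single planar diagram built from $F(\ev_c)$, $F(\coev_{c^\vee})$, $F(f)$, $F((\varphi^\cC_c)^{-1})$ and the tensorators. Using the previous step, each occurrence of $F(\ev_c)\circ\mu$ gets converted to $\ev_{F(c)}$ at the cost of a single factor of $\delta_c$, and each $\mu^{-1}\circ F(\coev_{c^\vee})$ gets converted to $\coev_{F(c)^\vee}$ at the cost of factors of $\delta_c$ and $\delta_c^\vee\circ\delta_{c^\vee}$. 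The pivotal compatibility \eqref{eqref:PivotalFunctor} then rewrites $F((\varphi^\cC_c)^{-1})$ as $(\delta_c^\vee\circ\delta_{c^\vee})\circ(\varphi^\cD_{F(c)})^{-1}$.

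\emph{Third}, after these substitutions the $\delta$ factors cancel in zig-zags along the strands of the resulting diagram. What is left is exactly the left-trace picture for $F(f)$ in $\cD$ against $\varphi^\cD$, which by \eqref{eq:DiagrammaticPivotalTraces} equals $\tr^{\varphi^\cD}_L(F(f))$. The right-trace identity follows either by the mirror version of the same argument, or more economically by applying the already-proven left case to $f^\vee$ together with the identities $\tr_L^\varphi(f)=\tr_R^\varphi(f^\vee)$ recorded in \S\ref{sec:SemisimplePivotal} and the fact that $(F,\mu)$ commutes with dualization up to $\delta$. The only real obstacle is bookkeeping: ensuring that the $\delta_c$ introduced by $F(\ev_c)$ and the $\delta_c^\vee\circ\delta_{c^\vee}$ produced by \eqref{eqref:PivotalFunctor} pair off correctly against the $\delta$'s introduced by $F(\coev_{c^\vee})$. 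This is forced, without an explicit coordinate computation, by the uniqueness clauses in Remark \ref{rem:CoevDeterminesEv} applied to the various comparisons of duals.
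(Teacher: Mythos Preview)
Your approach is essentially the paper's: expand $F(\tr_L^{\varphi^\cC}(f))$ via the tensorator $\mu$, trade the $F$-transported (co)evaluations for the $\cD$-canonical ones at the cost of $\delta$'s, invoke \eqref{eqref:PivotalFunctor} to absorb $F(\varphi_c^{-1})$, and observe that all $\delta$'s cancel. The paper organizes the computation slightly differently---it first passes through $\coev_{F(c^\vee)}$ rather than jumping straight to $\coev_{F(c)^\vee}$, and then slides $\delta_c$ around the loop using $(\delta_c\otimes\id)\circ\coev_{F(c^\vee)}=(\id\otimes\delta_c^\vee)\circ\coev_{F(c)^\vee}$---but this is the same argument in a different order.

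One genuine bookkeeping slip to fix: your composite ``$\delta_c^\vee\circ\delta_{c^\vee}:F(c^{\vee\vee})\to F(c)^{\vee\vee}$'' does not typecheck. Since $\delta_c:F(c^\vee)\to F(c)^\vee$ and $\vee$ is contravariant, we have $\delta_c^\vee:F(c)^{\vee\vee}\to F(c^\vee)^\vee$, while $\delta_{c^\vee}:F(c^{\vee\vee})\to F(c^\vee)^\vee$; both land in $F(c^\vee)^\vee$. The double-dual comparison you want is $(\delta_c^\vee)^{-1}\circ\delta_{c^\vee}$. Correspondingly, inverting \eqref{eqref:PivotalFunctor} gives $F((\varphi^\cC_c)^{-1})=(\varphi^\cD_{F(c)})^{-1}\circ(\delta_c^\vee)^{-1}\circ\delta_{c^\vee}$, not the order you wrote. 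With these corrections the cancellations go through exactly as you describe, and your appeal to uniqueness of duals (Remark~\ref{rem:CoevDeterminesEv}) does force the pairing to work out.
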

\begin{proof}
Notice that for $f\in \cC(c\to c)$, we always have
\begin{equation}
\label{eq:FOfLeftDimension}
F(\tr_L^\cC(f))
=
\begin{tikzpicture}[baseline=-.6cm]
	\draw (-.4,-2) -- (-.4,1);
	\draw (-1.2,-2) -- (-1.2,1);
	\draw[double] (-.8,1) -- (-.8,2);
	\draw[double] (-.8,-2) -- (-.8,-3);
	\roundNbox{fill=white}{(-.8,2)}{.3}{.3}{.3}{$F(\ev_{c})$}
	\roundNbox{fill=white}{(-.8,1)}{.3}{.5}{.5}{$\mu$}
	\roundNbox{fill=white}{(-.4,0)}{.3}{.2}{.2}{$F(f)$}
	\roundNbox{fill=white}{(-.4,-1)}{.35}{.3}{.3}{$F(\varphi_{c}^{-1})$}
	\roundNbox{fill=white}{(-.8,-2)}{.3}{.5}{.5}{$\mu^{-1}$}
	\roundNbox{fill=white}{(-.8,-3)}{.3}{.6}{.6}{$F(\coev_{c^\vee})$}
	\node at (-1.5,1.5) {\scriptsize{$F(c^\vee \otimes c)$}};
	\node at (-.1,.5) {\scriptsize{$F(c)$}};
	\node at (-.1,-.5) {\scriptsize{$F(c)$}};
	\node at (-1.6,-.5) {\scriptsize{$F(c^\vee)$}};
	\node at (.1,-1.5) {\scriptsize{$F(c^{\vee\vee})$}};
	\node at (-1.7,-2.5) {\scriptsize{$F(c^\vee \otimes c^{\vee\vee})$}};
\end{tikzpicture}
=
\begin{tikzpicture}[baseline=-.1cm]
	\draw (-1.4,-1.3) -- (-1.4,1.3) arc (180:0:.5cm) -- (-.4,-1.3) arc (0:-180:.5cm);
	\roundNbox{fill=white}{(-1.4,1)}{.3}{0}{0}{$\delta_c$}
	\roundNbox{fill=white}{(-.4,1)}{.3}{.2}{.2}{$F(f)$}
	\roundNbox{fill=white}{(-.4,0)}{.35}{.3}{.3}{$F(\varphi_{c}^{-1})$}
	\roundNbox{fill=white}{(-.4,-1)}{.3}{.1}{.1}{$\delta_{c^\vee}^{-1}$}
	\node at (-1.8,1.5) {\scriptsize{$F(c)^\vee$}};
	\node at (-.1,1.5) {\scriptsize{$F(c)$}};
	\node at (-.1,.5) {\scriptsize{$F(c)$}};
	\node at (-1.8,0) {\scriptsize{$F(c^\vee)$}};
	\node at (.1,-.5) {\scriptsize{$F(c^{\vee\vee})$}};
	\node at (.1,-1.5) {\scriptsize{$F(c^\vee)^{\vee}$}};
\end{tikzpicture}
=
\begin{tikzpicture}[baseline=-.1cm]
	\draw (-1.4,-2.3) -- (-1.4,1.3) arc (180:0:.5cm) -- (-.4,-2.3) arc (0:-180:.5cm);
	\roundNbox{fill=white}{(-.4,-2)}{.3}{0}{0}{$\delta_c^\vee$}
	\roundNbox{fill=white}{(-.4,1)}{.3}{.2}{.2}{$F(f)$}
	\roundNbox{fill=white}{(-.4,0)}{.35}{.3}{.3}{$F(\varphi_{c}^{-1})$}
	\roundNbox{fill=white}{(-.4,-1)}{.3}{.1}{.1}{$\delta_{c^\vee}^{-1}$}
	\node at (-1.8,1.5) {\scriptsize{$F(c)^\vee$}};
	\node at (-.1,1.5) {\scriptsize{$F(c)$}};
	\node at (-.1,.5) {\scriptsize{$F(c)$}};
	\node at (-1.8,0) {\scriptsize{$F(c^\vee)$}};
	\node at (.1,-.5) {\scriptsize{$F(c^{\vee\vee})$}};
	\node at (.1,-1.5) {\scriptsize{$F(c^\vee)^{\vee}$}};
	\node at (.1,-2.5) {\scriptsize{$F(c)^{\vee\vee}$}};
\end{tikzpicture}
\end{equation}
If $(F,\mu)$ is pivotal, then the right hand sides of \eqref{eq:PreserveLeftDimension} and \eqref{eq:FOfLeftDimension} above are equal.
The proof for the right pivotal trace is analogous.
\end{proof}

The converse of Lemma \ref{lem:PreserveDimensions} is true under some additional assumptions.

\begin{lem}
If $(\cC,\varphi^\cC),(\cD,\varphi^\cD)$ are pivotal semisimple multitensor categories 
and $(F,\mu): \cC \to \cD$ is a full strong monoidal functor which preserves the left or right pivotal traces, then $(F,\mu)$ is fully faithful and pivotal.
\end{lem}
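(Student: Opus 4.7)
First, I will prove faithfulness by reducing it to injectivity of the induced unital $\bbC$-algebra map $F\colon\cC(1_\cC\to 1_\cC)\to\cD(1_\cD\to 1_\cD)$. Given $F(f)=0$ for $f\in\cC(a\to b)$, trace preservation yields $0=\tr_L^{\varphi^\cD}(F(g)\circ F(f))=F(\tr_L^{\varphi^\cC}(g\circ f))$ for every $g\in\cC(b\to a)$. Once the unit algebra map is shown injective, this forces $\tr_L^{\varphi^\cC}(g\circ f)=0$ for all such $g$, hence $f=0$ by the nondegeneracy of $\tr_L^{\varphi^\cC}$ (Lemma \ref{lem:Nondegenerate}).

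To prove injectivity on $\cC(1_\cC\to 1_\cC)$, I need $F(1_i)\neq 0$ for every minimal unit summand $1_i$, since under the identification $\cC(1_\cC\to 1_\cC)\cong\bbC^r$ the minimal idempotents $p_i=\id_{1_i}$ go to the orthogonal idempotents $\id_{F(1_i)}$. Suppose instead that $F(1_{i_0})=0$ for some $i_0$. Then every simple $c$ with $1_{i_0}\otimes c\cong c$ satisfies $F(c)\cong F(1_{i_0})\otimes F(c)=0$, and applying trace preservation to $\id_c$ yields $\dim^{\varphi^\cC}_L(c)\cdot F(p_{t(c)})=\tr_L^{\varphi^\cD}(\id_{F(c)})=0$, where $t(c)$ is the unique index with $c\otimes 1_{t(c)}\cong c$. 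Since $\dim^{\varphi^\cC}_L(c)\neq 0$ by Corollary \ref{cor:NonzeroDimensionsForSimples}, this forces $F(1_{t(c)})=0$, and iterating propagates the vanishing across the whole indecomposable component of $\cC$ containing $i_0$---contradicting $F(1_\cC)\cong 1_\cD\neq 0$.

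For pivotality, I will feed faithfulness back into the trace-preservation identity. Now $F$ is fully faithful, so $F(c)$ is simple in $\cD$ whenever $c\in\cC$ is, making $\cD(F(c)\to F(c^\vee)^\vee)$ one-dimensional. The two candidate morphisms $\delta_c^\vee\circ\varphi^\cD_{F(c)}$ and $\delta_{c^\vee}\circ F(\varphi^\cC_c)$ in \eqref{eqref:PivotalFunctor} therefore differ by some scalar $\lambda_c\in\bbC^\times$. Rewriting $F(\tr_L^{\varphi^\cC}(f))=\tr_L^{\varphi^\cD}(F(f))$ at $f=\id_c$ via the graphical formulas \eqref{eq:PreserveLeftDimension} and \eqref{eq:FOfLeftDimension} reveals that the two diagrams differ exactly by the factor $\lambda_c$ inserted on the $F(c^\vee)^\vee$-string, and nondegeneracy of $\tr_L^{\varphi^\cD}$ pins $\lambda_c=1$. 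The resulting pivotality identity on simples extends to all objects by naturality and semisimplicity.

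The main obstacle I anticipate is the propagation argument for injectivity on $\cC(1_\cC\to 1_\cC)$: it depends on the indecomposability of $\cC$ in the sense that simple objects link any two unit summands. In the decomposable setting one first restricts $F$ to the full subcategory of objects whose source and target lie in $J=\{i:F(1_i)\neq 0\}$, on which all hypotheses continue to hold and the propagation argument applies verbatim; $F$ is then faithful on exactly this subcategory.
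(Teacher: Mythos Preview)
Your argument follows the same route as the paper's---nondegeneracy of $\tr_L$ together with trace preservation for faithfulness, then simplicity of $F(c)$ reducing the pivotality identity \eqref{eqref:PivotalFunctor} to a scalar comparison settled by the trace formulas \eqref{eq:PreserveLeftDimension} and \eqref{eq:FOfLeftDimension}---and is in fact more careful at one step. The paper passes directly from $\tr_L^{\varphi^\cC}(g\circ f)\neq 0$ to $\tr_L^{\varphi^\cD}(F(g)\circ F(f))\neq 0$, which tacitly uses that $F$ is injective on $\End_\cC(1_\cC)$; you supply this via your propagation argument showing that no summand $1_i$ is killed by $F$.

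Your closing observation about decomposable $\cC$ is a genuine point rather than an obstacle: the projection $\Vec\oplus\Vec\to\Vec$ onto one summand is full, strong monoidal, and trace-preserving but not faithful, so the indecomposability hypothesis you isolate (equivalently, that each $F(1_i)\neq 0$) is actually needed for the conclusion as stated.
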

\begin{proof}
We assume $(F,\mu)$ preserves the left pivotal traces, and the proof for the right pivotal traces is analogous.
First, suppose $f\in \cC(c\to d)$.
By nondegeneracy of $\tr^{\varphi^\cC}_L$, there is a $g\in \cC(d\to c)$ such that $\tr^{\varphi^\cC}_L(g\circ f) \neq 0$.
Then $\tr^{\varphi^\cD}_L(F(g)\circ F(f)) \neq 0$, so $F(f) \neq 0$ and $F$ is fully faithful.
Notice this immediately implies $F$ takes simples to simples, and non-isomorphic simples in $\cC$ remain non-isomorphic in $\cD$.
Now to show $(F, \mu)$ is pivotal, by monoidality, it suffices to prove \eqref{eqref:PivotalFunctor} when $c\in \cC$ is simple.
By the above argument, $F(c)$ is then simple, and every morphism in $\cD(F(c) \to F(c))$ is a scalar multiple of $\id_{F(c)}$.
Since the right hand side of \eqref{eq:PreserveLeftDimension} is equal to the right hand side of \eqref{eq:FOfLeftDimension} by hypothesis, by nondegeneracy of the trace from Lemma \ref{lem:Nondegenerate}, we must have
$\varphi^{-1}_{F(c)} = F(\varphi^{-1}_c) \circ \delta^{-1}_{c^\vee}\circ \delta^\vee_c$, and thus \eqref{eqref:PivotalFunctor} holds.
\end{proof}

\section{Unitary dual functors}

We begin this section with background on dagger structures and $\Cstar$ categories in \S\ref{sec:DaggerStructures}.
Next, we give the correct notion of unitary dual functor and unitary pivotal structure from \cite[\S7.3]{MR2767048}.
Similar to the situation for tensor categories, in \S\ref{sec:GradingGroupoid}, we classify pivotal structures on a multitensor category via homomorphisms out of the universal grading groupoid $\cU$.
In \S\ref{sec:BalancedDuals}, for a unitary multitensor category, we construct a canonical unitary dual functor from each groupoid homomorphism $\pi \in \Hom(\cU \to \bbR_{>0})$, and we show these exhaust the possible unitary equivalence classes of unitary dual functors.
We then describe the canonical bi-involutive structure associated to a unitary dual functor in \S\ref{sec:Bi-involutive}.

\subsection{Dagger structures and unitary multitensor categories}
\label{sec:DaggerStructures}

\begin{defn}
Given a $\bbC$-linear category $\cC$, a \emph{dagger structure} is a collection of anti-linear maps $\dag : \cC(c\to d) \to \cC(d\to c)$ for all $c,d\in\cC$ such that $(f\circ g)^\dag = g^\dag \circ f^\dag$ for composable $f$ and $g$, and $f^{\dag\dag} = f$ for all $f$.
A morphism $f : \cC(a \to b)$ is called \emph{unitary} if $f^\dag = f^{-1}$. 

A \emph{dagger (multi)tensor category} is a (multi)tensor category equipped with a dagger
structure so that $(f \otimes g)^\dag = f^\dag \otimes g^\dag$ for all
morphisms $f, g$, and all associator and unitors are unitary natural isomorphisms.
\end{defn}

\begin{defn}
A functor between dagger categories $F: \cM \to \cN$ is called a \emph{dagger functor} if $F(f^\dag) = F(f)^\dag$ for all morphisms $f$ in $\cM$.
Given finitely semisimple dagger categories $\cM$ and $\cN$, we define $\Fun^\dag(\cM\to \cN)$ to be the dagger category of dagger functors from $\cC \to \cD$ with dagger structure defined as follows.
Given a natural transformation of dagger functors $\eta: F \Rightarrow G$, it is straightforward to show that $(\eta^\dag)_m := (\eta_m)^\dag$ for $m\in\cM$ gives a well-defined natural transformation $\eta^\dag:G\Rightarrow F$.\footnote{In the $\rm C^*$ setting, one only considers \emph{bounded} natural transformations, i.e., those for which $\sup_{m \in \cM} \|\eta_m\| <\infty$.}
One now calculates that $\eta \mapsto \eta^\dag$ defines a dagger structure on $\Fun^\dag(\cM\to \cN)$.
It is important to note that a natural transformation $\eta: F \Rightarrow G$ is unitary if and only if $\eta_m \in \cN(F(m) \to G(m))$ is unitary for all $m\in\cM$.

A \emph{dagger equivalence} of dagger categories $\cM$ and $\cN$ consists of dagger functors $F: \cM \to \cN$ and $G: \cN \to \cM$ together with unitary natural isomorphisms $\id \Rightarrow F\circ G$ and $\id \Rightarrow G\circ F$.
A tensor functor between dagger tensor categories $(F, \mu): \cC \to \cD$ is called a \emph{dagger tensor functor} if $F$ is a dagger functor and $\mu_{c,d}$ is unitary for all $c,d\in \cC$.
Given a finitely semisimple dagger category $\cM$, $\End^\dag(\cM):= \Fun^\dag(\cM,\cM)$ is easily seen to be a strict semisimple dagger multitensor category.
\end{defn}

\begin{remark}
\label{rem:PrincipleOfEquivalence}
The \emph{principle of equivalence} \cite{nlab:principle_of_equivalence} in category theory roughly states that a properly defined structure should be invariant under the proper notion of equivalence.
The proper notion of equivalence between two objects in a dagger category is that they are \emph{unitary isomorphic}, and the proper notion of equivalence between dagger functors between dagger categories is that they are \emph{unitarily naturally isomorphic}. 
For example, if $F,G: \cC \to \cD$ are functors between dagger categories with $F$ a dagger functor, and $\eta: F\Rightarrow G$ is a natural isomorphism, $G$ need not be a dagger functor unless $\eta$ is unitary.

With this in mind, a dagger category \emph{cannot} be considered as a category with some extra categorical structure.
For example, if $\cD$ is a dagger category and the underlying category of $\cD$ is equivalent to the category $\cC$, there is generally no way to endow $\cC$ with a dagger structure which promotes the equivalence to a dagger equivalence.
We refer the reader to the helpful discussion between Shulman and Selinger available at \cite{nlab:principle_of_equivalence} for further details.
\end{remark}

\begin{remark}
The forgetful functor on a finitely semisimple dagger (multitensor) category which forgets the dagger structure is a fully faithful (tensor) functor.
Thus 
the category $\fdHilb$ of finite dimensional Hilbert spaces 
is equivalent to the 
category $\fdVec$ 
of finite dimensional vector spaces
as a linear (tensor) category.
Also, for a finitely semisimple dagger category $\cM$, $\End^\dag(\cM)\cong \End(\cM)$ as linear multitensor categories.
\end{remark}

\begin{defn}
A dagger category which admits orthogonal direct sums is called a \emph{$\Cstar$ category} if every endomorphism algebra is a $\Cstar$-algebra (see \cite{MR808930,MR3663592}).
Notice that a finitely semisimple dagger category is $\rm C^*$ if and only if it is dagger equivalent to $\Hilb^n$ for some $n\in \bbN$.
\end{defn}

\begin{defn}
A \emph{tensor $\Cstar$ category} is a linear monoidal dagger category which admits orthogonal direct sums and is idempotent complete, and whose underlying dagger category is $\Cstar$.
A \emph{unitary (multi)tensor category} is a semisimple (multi)tensor $\Cstar$ category.\footnote{
By a generalization of \cite[Lem.~3.9]{MR1444286} (see also the second paragraph on p.~9 therein), a tensor $\Cstar$ category is rigid if and only if it is semisimple.
Hence the adjective `tensor' in `tensor $\Cstar$ category' does \emph{not} include `rigid' nor having simple unit object, in conflict with the definition of `tensor category' following \cite{MR3242743}.
}
As before, the prefix \emph{multi-} is used if and only if $1_\cC$ is not simple.

A \emph{unitary (multi)fusion category} is a finitely semisimple unitary (multi)tensor category.
For $r>1$, an \emph{$r\times r$ unitary multifusion category} is an indecomposable unitary multifusion category such that $\dim(\cC(1_\cC \to 1_\cC)) = r$, so we can orthogonally decompose $1_\cC$ into simples as $\bigoplus_{i=1}^r 1_i$.
\end{defn}

\begin{warn}
\label{warn:Pseudounitary}
While natural from a \emph{non-unitary} viewpoint, pseudounitary pivotal structures are \emph{unnatural} for unitary multitensor categories.
The problem arises from the fact that while a dual functor is unique up to unique natural isomorphism as in \eqref{eq:CanonicalIntertwiner}, this unique natural isomorphism need not be unitary!
Hence one may only discuss the compatibility of a \emph{fixed} dual functor $\vee: \cC \to \cC^{\rm mop}$ with $\dag$.
However, compatibility of $\dag$ with $\vee$ need not imply compatibility of $\dag$ with an equivalent dual functor $\vee': \cC \to \cC^{\rm mop}$.
We provide Lemma \ref{lem:DagVeeProblem} below which gives a sufficient condition to transport the compatibility.
In \S\ref{sec:BalancedDuals} below, we give a \emph{manifestly unitary} approach, and we reconcile the latter with the former.
\end{warn}

\begin{lem}
Suppose $\cC$ is a unitary multitensor category and $\varphi$ is a pivotal structure.
The following are equivalent.
\begin{enumerate}[(1)]
\item
$(\cC, \varphi)$ is pseudounitary.
\item
For all $a,b\in \cC$ and all $f\in \cC(a\to b)$ with $f\neq 0$, 
$\tr_L(f^\dag \circ f) > 0$ and $\tr_R(f^\dag \circ f) > 0$.\footnote{
Note that if $f^{\vee\dag} = f^{\dag\vee}$, then $\tr_L(f^\dag \circ f) > 0$ if and only if $\tr_R(f^\dag \circ f) > 0$.
}
\end{enumerate}
\end{lem}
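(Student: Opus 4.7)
The plan is to reduce both implications to the computation of $\tr_L^\varphi(\id_c)$ and $\tr_R^\varphi(\id_c)$ for $c$ simple, using semisimplicity of $\cC$ together with the tracial property of the two trace maps. Throughout, $\cC(1_\cC \to 1_\cC) \cong \bbC^r$ is a commutative finite-dimensional $\Cstar$-algebra with minimal projections $p_1,\ldots,p_r$, and ``$>0$'' is interpreted in this algebra as positive and nonzero (equivalently, a nonnegative real combination of the $p_j$ with at least one strictly positive coefficient).

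For $(2) \Rightarrow (1)$, I would simply test the hypothesis against $f = \id_c$ for each simple $c$. Simplicity of $c$ and the decomposition $1_\cC = \bigoplus_k 1_k$ force $c = 1_i \otimes c \otimes 1_j$ for a unique pair $(i,j)$, and the definition of $\tr_L^\varphi$ then yields $\tr_L^\varphi(\id_c) = \dim_L^\varphi(c)\, p_j$. Positivity and nonvanishing of this element of $\bbC^r$ is then equivalent to $\dim_L^\varphi(c) > 0$. The parallel computation with $\tr_R^\varphi(\id_c) = \dim_R^\varphi(c)\, p_i$ gives $\dim_R^\varphi(c) > 0$.

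For $(1) \Rightarrow (2)$, suppose $f \in \cC(a \to b)$ is nonzero, so $f^\dag \circ f$ is a nonzero positive element of the $\Cstar$-algebra $\cC(a \to a)$. By semisimplicity, choose orthonormal isometries $\{w_{c,k} : c \to a\}$ with $c$ running over $\Irr(\cC)$ and $k$ indexing an orthonormal basis of $\cC(c\to a)$, so that $w_{c',l}^\dag \circ w_{c,k} = \delta_{c,c'}\delta_{k,l}\id_c$ and $\sum_{c,k} w_{c,k} \circ w_{c,k}^\dag = \id_a$. Expanding
\[
f^\dag \circ f = \sum_{c,k,l} \lambda_{c,kl}\, w_{c,k} \circ w_{c,l}^\dag,
\]
each matrix $(\lambda_{c,kl})_{k,l}$ is positive semidefinite and at least one has strictly positive diagonal sum. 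Using the tracial property and the orthonormality relations $w_{c,l}^\dag \circ w_{c,k} = \delta_{kl}\id_c$, one computes
\[
\tr_L^\varphi(f^\dag \circ f) = \sum_{c \in \Irr(\cC)} \Bigl(\sum_k \lambda_{c,kk}\Bigr) \tr_L^\varphi(\id_c) = \sum_{c \in \Irr(\cC)} \Bigl(\sum_k \lambda_{c,kk}\Bigr) \dim_L^\varphi(c)\, p_{j_c},
\]
where $c = 1_{i_c} \otimes c \otimes 1_{j_c}$. Under hypothesis (1), each coefficient is a nonnegative real multiple of a strictly positive number, and at least one is strictly positive, so the sum is a nonzero positive element of $\bbC^r$. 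The computation for $\tr_R^\varphi(f^\dag \circ f)$ is identical, using $\dim_R^\varphi(c) > 0$ and $p_{i_c}$ in place of $p_{j_c}$.

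I expect no serious obstacle here: the only subtlety is that in the multitensor setting the traces take values in $\bbC^r$ rather than $\bbC$, so ``positive and nonzero'' must be checked componentwise against each minimal projection $p_j$. This is automatic from the decomposition above, because each simple $c$ contributes a nonnegative multiple of exactly one of the $p_j$, and the $f\neq 0$ hypothesis guarantees at least one such contribution has strictly positive coefficient.
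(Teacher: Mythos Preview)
Your proof is correct and follows essentially the same approach as the paper: both arguments reduce to the simple case via semisimplicity, orthonormal isometries, and the tracial property of $\tr_L^\varphi$ and $\tr_R^\varphi$. Your version is slightly more streamlined in that you decompose $a$ directly over all of $\Irr(\cC)$ at once, whereas the paper first passes to isotypic components and then to simples, but this is a cosmetic difference rather than a genuinely different route.
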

\begin{proof}
That $(2)\Rightarrow (1)$ is trivial.
Suppose $(\cC, \varphi)$ is pseudounitary.
We show that for all $a,b\in \cC$ and $f\in \cC(a\to b)$ with $f\neq 0$, $\tr_L^\varphi(f^\dag\circ f)>0$.
The proof that $\tr^\varphi_R(f\circ f^\dag)>0$ is similar.

\item[\underline{Step 1:}]
Suppose $c\in \cC$ is simple and $f\in \cC(c\to c)$ with $f\neq 0$.
Then $f = \lambda \id_c$ for some $\lambda \in \bbC^\times$, so $f^\dag\circ f = |\lambda|^2\id_c$, and $\tr_L^\varphi(f^\dag\circ f) = |\lambda|^2\dim^\varphi_L(c) >0$.

\item[\underline{Step 2:}]
Suppose $a,b\in \cC$ are respectively orthogonal direct sums of $m,n$ objects isomorphic to the simple object $c\in \cC$ and $f\in \cC(a\to b)$ with $f\neq 0$.
Pick $m$ isometries $v_1,\dots, v_m\in \cC(c\to a)$ so that $\sum_{i=1}^m v_i \circ v_i^\dag = \id_a$.
Note that $f\neq 0$ if and only if $v_i^\dag \circ f^\dag \circ f \circ v_i \in \cC(c\to c)$ is nonzero for some $i=1,\dots, m$.
Thus by Step 1,
$$
\tr^\varphi_L(f^\dag \circ f)
=
\sum_{i=1}^m 
\tr^\varphi_L(v_i \circ v_i^\dag\circ f^\dag \circ f)
=
\sum_{i=1}^m 
\tr^\varphi_L(v_i^\dag\circ f^\dag \circ f \circ v_i)
>0.
$$

\item[\underline{Step 3:}]
For arbitrary $a,b\in \cC$ and $f\in \cC(a\to b)$ nonzero, decompose $a$ and $b$ into orthogonal direct sums of isotypic components and apply Step 2.
\end{proof}

\subsection{Unitary dual functors}

For this section, $\cC$ is a unitary multitensor category

The following proposition is \cite[Lem.~7.5]{MR2767048}, which can be viewed as a generalization of \cite[Lem.~2.16]{MR2794547} in the non-strict unitary multitensor category setting.

\begin{prop}
\label{prop:VeeAndDagCompatibility}
Fix a dual functor $\vee : \cC \to \cC^{\rm mop}$ with its canonical tensorator $\nu$ from \eqref{eq:CanonicalTensorator}.
The following are equivalent.
\begin{enumerate}[(1)]
\item
$\vee$ is a dagger tensor functor, i.e., 
for all $a,b\in \cC$ and $f\in \cC(a\to b)$, $\nu_{a,b}$ is unitary and $f^{\vee\dag} = f^{\dag\vee}$.
\item
Defining $\varphi_c:=(\coev_{c}^\dag \otimes \id_{c^{\vee\vee}})\circ (\id_c\otimes \coev_{c^\vee})$ gives a pivotal structure $\varphi: \id \Rightarrow \vee\circ\vee$.
\end{enumerate}
\end{prop}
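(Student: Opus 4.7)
My plan is to treat both directions of the equivalence by first extracting a common underlying observation: for any dual functor $\vee$ (with no compatibility assumption on $\dag$), the daggered data $(c, \coev_c^\dag, \ev_c^\dag)$ automatically satisfies zig-zag axioms making $c$ into a \emph{left} dual of $c^\vee$ with evaluation $\coev_c^\dag : c \otimes c^\vee \to 1$ and coevaluation $\ev_c^\dag : 1 \to c^\vee \otimes c$. This is because applying $\dag$ to the zig-zag axioms for $(c^\vee, \ev_c, \coev_c)$ literally produces the zig-zag axioms for this left-dual structure. Since $(c^{\vee\vee}, \ev_{c^\vee}, \coev_{c^\vee})$ is also a left dual of $c^\vee$, the uniqueness of duals up to unique isomorphism (Remark~\ref{rem:CoevDeterminesEv}) produces a canonical invertible $\varphi_c : c \to c^{\vee\vee}$ whose explicit formula matches the one in (2). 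In particular, the two expressions given for $\varphi_c$ in \eqref{eq:CanonicalPivotalStructureFromUnitaryDualFunctor} agree automatically as they are both instances of \eqref{eq:CanonicalIntertwiner} applied to the same pair of left duals.

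For $(1) \Rightarrow (2)$, the existence and invertibility of each $\varphi_c$ is already in hand; what needs to be checked is naturality and monoidality. For naturality, I would observe that given $f \in \cC(a \to b)$, the identity $\varphi_b \circ f = f^{\vee\vee} \circ \varphi_a$ says exactly that $f^\dag$ intertwines the dagger-coevaluations on $a$ and $b$ in the same way that $f^{\vee\vee}$ intertwines $\coev_{a^\vee}$ and $\coev_{b^\vee}$; by uniqueness of the canonical intertwiner between two left duals, this reduces to the hypothesis $f^{\vee\dag} = f^{\dag \vee}$. For monoidality, I would expand both sides of the square \eqref{eq:EquivalentPivotalStructures} using the explicit formulas for $\varphi$ and for the tensorator \eqref{eq:CanonicalTensorator}, and apply zig-zag to collapse the resulting picture. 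The key reduction uses $\nu_{a,b}^\dag = \nu_{a,b}^{-1}$ together with the fact that $\nu_{b^\vee, a^\vee}$ is itself the canonical intertwiner between the two natural left duals $(b \otimes a, \coev_{b \otimes a}^\dag, \ev_{b \otimes a}^\dag)$ and $(a^\vee \otimes b^\vee, \ldots)$ of $(b \otimes a)^\vee = a^\vee \otimes b^\vee$, so that everything reduces to a statement about the uniqueness of the intertwiner between two left duals of $(a \otimes b)^\vee$.

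For $(2) \Rightarrow (1)$, I would run the arguments in reverse. Assume $\varphi$ given by the stated formula is a monoidal natural iso. Unpacking naturality of $\varphi_c$ in $c$ for an arbitrary $f \in \cC(a \to b)$, and then composing both sides with $\ev_b \otimes \id$ and $\id \otimes \coev_a$ to strip off $\varphi$, one directly recovers $f^{\vee\dag} = f^{\dag\vee}$. Using this, the monoidality square \eqref{eq:EquivalentPivotalStructures} can be rewritten purely in terms of $\nu_{a,b}$ and $\nu_{a,b}^\dag$; because the left-hand side equals $\varphi_a \otimes \varphi_b$ and both $\varphi_{a \otimes b}$ and $\nu$ are built from the same universal construction, the square collapses to $\nu_{a,b} \circ \nu_{a,b}^\dag = \id$ and $\nu_{a,b}^\dag \circ \nu_{a,b} = \id$, yielding unitarity of $\nu$.

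The step I expect to be the principal obstacle is the monoidality computation in $(1) \Rightarrow (2)$ (equivalently, its reverse in $(2) \Rightarrow (1)$), since it simultaneously involves $\nu_{a,b}$, $\nu_{b^\vee, a^\vee}$, and their daggers inside a nested diagram of coevaluations and evaluations. The calculation is essentially a long but routine diagram chase; organizing it cleanly via the uniqueness of the intertwiner between two left duals of $(a \otimes b)^\vee$, rather than brute-forcing the zig-zags, is what makes both directions transparent and symmetric.
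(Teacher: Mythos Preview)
Your approach is correct and arrives at the same two reductions the paper uses: naturality of $\varphi$ is equivalent to $f^{\vee\dag}=f^{\dag\vee}$, and monoidality of $\varphi$ is equivalent to unitarity of $\nu$. The organizational difference is that the paper handles both directions at once by writing each as a short chain of ``if and only if'' manipulations: it unfolds $\varphi_b\circ f = f^{\vee\vee}\circ\varphi_a$ directly into $\coev_b^\dag\circ(f\otimes\id) = \coev_a^\dag\circ(\id\otimes f^\vee)$, which is visibly equivalent to $f^{\dag\vee}=f^{\vee\dag}$; and it unfolds the monoidality square into a single equation that is literally $\nu_{a,b}^\dag = \nu_{a,b}^{-1}$. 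Your framing through the observation that $(c,\coev_c^\dag,\ev_c^\dag)$ is a second left dual of $c^\vee$ and that $\varphi_c$ is the canonical intertwiner from Remark~\ref{rem:CoevDeterminesEv} is a nice conceptual overlay (and automatically explains why $\varphi_c$ is invertible without computation), but once you actually execute the naturality and monoidality checks you will be doing the same zig-zag manipulations the paper does. The paper's version is shorter because it skips the conceptual preamble and avoids treating $(1)\Rightarrow(2)$ and $(2)\Rightarrow(1)$ as separate arguments.
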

\begin{proof}
First, note that $\varphi$ is natural if and only if 
$$
\varphi_b\circ f =f^{\vee\vee}\circ \varphi_a
\qquad
\forall f\in \cC(a\to b)
$$
if and only if 
$$
\coev_b^\dag\circ (f \otimes \id_{b^\vee})
=
\coev_a^\dag \circ (\id_{a}\otimes f^\vee)
\qquad
\forall f\in \cC(a\to b)
$$
if and only if $f^{\dag\vee}=f^{\vee\dag}$ for all $f\in \cC(a\to b)$.
Second, note that $\varphi$ is monoidal if and only if 
$$
\coev_a^\dag\circ (\id_a \otimes \coev^\dag_b \otimes \id_{a^\vee})
=
\coev^\dag_{a\otimes b} \circ (\id_{a\otimes b} \otimes \nu_{b,a})
\qquad
\forall a,b\in \cC
$$
if and only if 
$$
\id_{b^\vee\otimes a^\vee}
=
(\id_{b^\vee \otimes a^\vee} \otimes \coev_{a\otimes b}^\dag)
\circ
(\id_{b^\vee} \otimes \ev_a^\dag \otimes \id_b \otimes \id_{(a\otimes b)^\vee})
\circ
(\ev_b^\dag \otimes \id_{(a\otimes b)^\vee})
\circ
\nu_{b,a}
\qquad
\forall a,b\in\cC
$$
if and only if $\nu_{a,b}$ is unitary for all $a,b\in \cC$.
\end{proof}

\begin{cor}
\label{cor:UnitaryPivotal}
If either of the equivalent conditions of Proposition \ref{prop:VeeAndDagCompatibility} hold, then for all $c\in \cC$, 
\begin{equation}
\label{eq:UnitaryPivotalStructure}
(\coev_{c}^\dag \otimes \id_{c^{\vee\vee}})\circ (\id_c\otimes \coev_{c^\vee})
=:
\varphi_c
=
(\id_{c^{\vee\vee}} \otimes \ev_c)\circ ( \ev_{c^\vee}^\dag \otimes \id_c),
\end{equation}
which is equivalent to $\varphi_c$ being unitary for all $c\in \cC$. 
\end{cor}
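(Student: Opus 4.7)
The corollary has two claims: under the hypothesis of Proposition \ref{prop:VeeAndDagCompatibility}, first that the two displayed expressions for $\varphi_c$ coincide, and second that this equation is equivalent to $\varphi_c$ being unitary for all $c$. The first claim exploits the monoidal naturality of $\varphi$ now in force. My plan is to apply naturality of $\varphi$ to $\coev_c : 1 \to c \otimes c^\vee$, yielding $\coev_c^{\vee\vee} = \varphi_{c \otimes c^\vee} \circ \coev_c$ (since $\varphi_1 = \id$). Next, expand $\varphi_{c \otimes c^\vee}$ via the monoidality axiom as a composite of $\varphi_c \otimes \varphi_{c^\vee}$ and the canonical tensorators $\nu$, rewrite $\coev_c^{\vee\vee}$ using the dagger--vee interchange $f^{\dag\vee} = f^{\vee\dag}$ (together with unitarity of $\nu$), and cancel $\coev_c$ on the right via the zig-zag axiom; what drops out is exactly the desired equality, call it $\varphi_c^{(1)} = \varphi_c^{(2)}$ with $\varphi_c^{(1)}$ and $\varphi_c^{(2)}$ denoting the left- and right-hand sides of \eqref{eq:UnitaryPivotalStructure}. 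Graphically, the two expressions are isotopic representatives of the pivotal structure.

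For the equivalence with unitarity, the forward direction is a zig-zag calculation. Assuming $\varphi_c := \varphi_c^{(1)} = \varphi_c^{(2)}$, compute
\[
\varphi_c \circ \varphi_c^\dag = \varphi_c^{(2)} \circ (\varphi_c^{(1)})^\dag = (\id_{c^{\vee\vee}} \otimes \ev_c)(\ev_{c^\vee}^\dag \otimes \id_c)(\id_c \otimes \coev_{c^\vee}^\dag)(\coev_c \otimes \id_{c^{\vee\vee}}).
\]
Applying the dagger of the zig-zag for $c^\vee$ as a dual of $c^{\vee\vee}$, namely $(\coev_{c^\vee}^\dag \otimes \id_{c^{\vee\vee}})(\id_{c^{\vee\vee}} \otimes \ev_{c^\vee}^\dag) = \id_{c^{\vee\vee}}$, to the middle $c^\vee$--$c^{\vee\vee}$ block, and then the ordinary $c$-zig-zag to collapse the outer $c$--$c^\vee$ loop, yields $\id_{c^{\vee\vee}}$; the computation of $\varphi_c^\dag \circ \varphi_c = \id_c$ is symmetric. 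Conversely, assuming $\varphi_c^{(1)}$ is unitary, the same zig-zag chain shows $(\varphi_c^{(1)})^\dag \circ \varphi_c^{(2)} = \id_c$, and composing with $\varphi_c^{(1)}$ on the left while using $\varphi_c^{(1)} \circ (\varphi_c^{(1)})^\dag = \id_{c^{\vee\vee}}$ gives $\varphi_c^{(2)} = \varphi_c^{(1)}$.

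The main obstacle is the first part: assembling naturality, monoidality, dagger--vee compatibility, and unitarity of $\nu$ into a single algebraic chain of equalities for $\varphi_c^{(1)} = \varphi_c^{(2)}$ is notationally heavy, and the cleanest presentation is arguably graphical (an isotopy in the pivotal calculus). The second part, by contrast, reduces to two short zig-zag calculations once the correct bracketing is chosen.
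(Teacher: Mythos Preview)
Your proposal is correct, and your treatment of the second claim (equivalence with unitarity of $\varphi_c$) is essentially the same as the paper's: both come down to the unconditional identity $(\varphi_c^{(1)})^\dag = (\varphi_c^{(2)})^{-1}$, which follows from the zig-zag axioms together with their daggers. The paper states this as a single observation and then reads off both directions of the equivalence by elementary algebra; you split it into a forward and backward computation, but it is the same content. (A small typo: your displayed dagger zig-zag $(\coev_{c^\vee}^\dag \otimes \id_{c^{\vee\vee}})(\id_{c^{\vee\vee}} \otimes \ev_{c^\vee}^\dag) = \id_{c^{\vee\vee}}$ does not type-check as written; you mean $(\id_{c^{\vee\vee}}\otimes\coev_{c^\vee}^\dag)(\ev_{c^\vee}^\dag\otimes\id_{c^{\vee\vee}}) = \id_{c^{\vee\vee}}$.)

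For the first claim your route is more circuitous than necessary. The paper's argument is one line: apply condition~(1) of Proposition~\ref{prop:VeeAndDagCompatibility} directly to the morphism $f=\coev_c$, obtaining $\coev_c^{\dag\vee} = \coev_c^{\vee\dag} = \ev_{c^\vee}^\dag$ (using the standard identity $\coev_c^\vee = \ev_{c^\vee}$ up to the unitary tensorator), and this equation is exactly \eqref{eq:UnitaryPivotalStructure} after bending down one leg with a zig-zag. By contrast, you invoke naturality and monoidality of $\varphi$ applied to $\coev_c$, then unpack $\varphi_{c\otimes c^\vee}$ via tensorators, then re-express $\coev_c^{\vee\vee}$ using $f^{\dag\vee}=f^{\vee\dag}$ and unitarity of $\nu$. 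This works, but it is indirect: the proof of Proposition~\ref{prop:VeeAndDagCompatibility} shows that naturality of $\varphi$ is \emph{equivalent} to $f^{\dag\vee}=f^{\vee\dag}$ and monoidality of $\varphi$ is \emph{equivalent} to unitarity of $\nu$, so you are effectively passing through the equivalence and back rather than using condition~(1) once. Your own remark that the argument is ``notationally heavy'' and best seen graphically is a signal that a shorter algebraic path exists.
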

\begin{proof}
By (1) of Proposition \ref{prop:VeeAndDagCompatibility}, we have  $\coev_c^{\dag\vee} = \coev_c^{\vee\dag} = \ev_{c_\vee}^\dag$, which is equivalent to \eqref{eq:UnitaryPivotalStructure}.
Now notice that $\varphi_c^\dag$ is the inverse of the expression on the right hand side of \eqref{eq:UnitaryPivotalStructure}, so \eqref{eq:UnitaryPivotalStructure} holds if and only if $\varphi_c$ is unitary. 
\end{proof}

\begin{defn}
A dual functor $\vee: \cC \to \cC^{\text{mop}}$ is called a \emph{unitary dual functor}
if any of the equivalent conditions of Proposition \ref{prop:VeeAndDagCompatibility} hold.
Two unitary dual functors are called \emph{unitarily equivalent} if the canonical monoidal natural isomorphism from \eqref{eq:CanonicalIntertwiner} is unitary.
\end{defn}

In line with the principle of equivalence for dagger categories discussed in Remark \ref{rem:PrincipleOfEquivalence}, unitary equivalence between a unitary dual functor and an arbitrary dual functor transports unitarity, as we will see right below in Lemma \ref{lem:DagVeeProblem}.
Of course, two unitary dual functors need not be unitarily equivalent, as can be seen from the construction in \S\ref{sec:BalancedDuals} together with Lemma \ref{lem:UnitarilyEquivalentUnitaryDuals} below.

\begin{lem}
\label{lem:DagVeeProblem}
Suppose $\nu_1,\nu_2 : \cC \to \cC^{\rm mop}$ are two dual functors such that $\vee_1$ unitary.
If for all $c\in \cC$, the canonical isomorphism $\zeta_c\in \cC(c^{\vee_2} \to c^{\vee_1})$ from \eqref{eq:CanonicalIntertwiner} is unitary, then $\vee_2$ is unitary.
\end{lem}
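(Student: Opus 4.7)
My plan is to verify the two conditions in Proposition \ref{prop:VeeAndDagCompatibility}(1) for $\vee_2$: that $f^{\vee_2 \dag} = f^{\dag \vee_2}$ for every morphism $f$, and that the canonical tensorator $\nu^2_{a,b}$ is unitary for every $a, b \in \cC$. The key observation I will exploit is that by Remark \ref{rem:CoevDeterminesEv}, the collection $\{\zeta_c\}$ from \eqref{eq:CanonicalIntertwiner} assembles into the unique monoidal natural isomorphism $\zeta : \vee_2 \Rightarrow \vee_1$, and by hypothesis each component $\zeta_c$ is unitary. So the strategy is to transport unitarity of $\vee_1$ across $\zeta$ using separately the naturality and the monoidality of $\zeta$.

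For the compatibility of $\vee_2$ with $\dag$, I will apply naturality of $\zeta$ to $f \in \cC(a \to b)$ to write $f^{\vee_2} = \zeta_a^{-1} \circ f^{\vee_1} \circ \zeta_b$. Taking adjoints and using $\zeta_a^\dag = \zeta_a^{-1}$ and $\zeta_b^\dag = \zeta_b^{-1}$ produces $f^{\vee_2 \dag} = \zeta_b^{-1} \circ f^{\vee_1 \dag} \circ \zeta_a$. Applying the same naturality identity to $f^\dag \in \cC(b \to a)$ gives $f^{\dag \vee_2} = \zeta_b^{-1} \circ f^{\dag \vee_1} \circ \zeta_a$. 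Since $\vee_1$ is unitary, Proposition \ref{prop:VeeAndDagCompatibility} gives $f^{\dag \vee_1} = f^{\vee_1 \dag}$, and the two expressions coincide.

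For unitarity of the tensorator, I will use monoidality of $\zeta$ (with respect to the anti-monoidal structures on $\vee_1$ and $\vee_2$) to deduce $\zeta_{a \otimes b} \circ \nu^2_{a,b} = \nu^1_{a,b} \circ (\zeta_b \otimes \zeta_a)$, i.e., $\nu^2_{a,b} = \zeta_{a \otimes b}^{-1} \circ \nu^1_{a,b} \circ (\zeta_b \otimes \zeta_a)$. Since $\nu^1_{a,b}$ is unitary by hypothesis and each $\zeta$ is unitary, $\nu^2_{a,b}$ is a composition of unitaries, hence unitary. I do not anticipate any serious obstacle; the only thing requiring care is the direction of $\zeta$ and the reversal of tensor factors in $\cC^{\rm mop}$, both of which are governed by Remark \ref{rem:CoevDeterminesEv} and the formula \eqref{eq:CanonicalTensorator} for $\nu$.
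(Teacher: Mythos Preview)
Your proposal is correct and follows essentially the same approach as the paper. The only cosmetic difference is that for the compatibility $f^{\vee_2\dag}=f^{\dag\vee_2}$ you invoke naturality of $\zeta$ abstractly to obtain $f^{\vee_2}=\zeta_a^{-1}\circ f^{\vee_1}\circ\zeta_b$, whereas the paper unwinds this same identity as an explicit string of evaluation and coevaluation maps; the tensorator argument is identical in both.
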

\begin{proof}
Suppose that the canonical isomorphism in \eqref{eq:CanonicalIntertwiner} is always unitary.
Then for all $f\in \cC(a\to b)$, 
\begin{align*}
f^{\vee_2\dag}
&=
(\id_{b^{\vee_2}} \otimes (\coev^2_a)^\dag)\circ (\id_{b^{\vee_2}}\otimes f^\dag \otimes \id_{a^{\vee_2}})\circ ((\ev^2_b)^\dag \otimes \id_{a^{\vee_2}})
\\&=
(\id_{b_2^\vee} \otimes (\coev_b^1)^\dag \otimes (\coev_a^1)^\dag\otimes (\coev^2_a)^\dag)
\circ 
(\id_{b^{\vee_2}\otimes b\otimes b^{\vee_1}}\otimes f^\dag \otimes \id_{a^{\vee_1}\otimes a\otimes a^{\vee_2}})
\\&\hspace{6.91cm}\circ 
((\ev^2_b)^\dag \otimes (\ev^1_b)^\dag\otimes (\ev^1_a)^\dag\otimes \id_{a^{\vee_2}})
\\&=
(\ev_a^2 \otimes \id_{a^{\vee_1}})\circ (\id_{a^{\vee_2}} \otimes \coev_a^1)
\circ 
f^{\dag\vee_1}
\circ 
(\ev_b^2 \otimes \id_{b^{\vee_1}})\circ (\id_{b^{\vee_2}} \otimes \coev_b^1)
\\&=
f^{\dag\vee_2}.
\end{align*}
Moreover, for all $a,b\in \cC$, we have
$$
\nu^2_{a,b}
=
\zeta_{b\otimes a}^{-1}
\circ
\nu_{a,b}^1
\circ
(\zeta_a\otimes \zeta_b)
\in 
\cC(a^{\vee_2 }\otimes b^{\vee_2}\to (b\otimes a)^{\vee_2}),
$$
which is necessarily unitary as it is a composite of unitaries.
Hence $\vee_2$ is unitary.
\end{proof}

\begin{defn}
We call a pivotal structure $(\vee,\varphi)$ \emph{unitary} if $\vee$ is a unitary dual functor and $\varphi$ is as in \eqref{eq:UnitaryPivotalStructure}.
Two unitary pivotal structures are \emph{unitarily equivalent} if they are equivalent and the canonical monoidal natural isomorphism from \eqref{eq:EquivalentPivotalStructures} is unitary.
\end{defn}

\begin{rem}
\label{rem:UnitaryImpliesPseudounitary}
For a unitary dual functor $\vee$, the left and right pivotal traces have alternate formulas that show they are manifestly positive linear operators $\cC(c\to c) \to \cC(1_\cC \to 1_\cC)$:
\begin{align*}
\tr_L^\varphi(f) 
&
\underset{\text{\eqref{eq:DiagrammaticPivotalTraces}}}{=}
\ev_c
\circ 
(\id_{c^\vee} \otimes f)
\circ 
(\id_{c^\vee}\otimes \varphi_c^{-1})
\circ
\coev_{c^\vee}
\underset{\text{Cor.~\ref{cor:UnitaryPivotal}}}{=}
\ev_c \circ (\id_{\overline{c}^\vee}\otimes f) \circ \ev_c^\dag
\\
\tr_R^\varphi(f)
&
\underset{\text{\eqref{eq:DiagrammaticPivotalTraces}}}{=}
\ev_c
\circ 
(\varphi_c \otimes \id_{c^\vee})
\circ 
(f\otimes \id_{c^\vee} )
\circ
\coev_{c^\vee}
\underset{\text{Cor.~\ref{cor:UnitaryPivotal}}}{=}
\coev_c^\dag \circ (f\otimes \id_{c^\vee})\circ \coev_c.
\end{align*}
Hence every unitary pivotal structure is pseudounitary.
We will use these alternate formulas in \S\ref{sec:BalancedDuals} below.
\end{rem}

\begin{lem}
\label{lem:UnitarilyEquivalentUnitaryDuals}
Fix two unitary dual functors $\vee_1, \vee_2$, and let $\varphi^1,\varphi^2$ be the respective induced unitary pivotal structures.
We have $\vee_1$ and $\vee_2$ are unitarily equivalent if and only if $\varphi^1$ and $\varphi^2$ are unitarily equivalent.
\end{lem}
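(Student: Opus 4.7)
The plan is to translate the question into equality of left quantum dimensions using the alternate formula for the pivotal trace from Remark~\ref{rem:UnitaryImpliesPseudounitary}. Setting $f=\id_c$ in $\tr^\varphi_L(f)=\ev_c\circ(\id_{c^\vee}\otimes f)\circ \ev_c^\dag$ yields $\dim^\varphi_L(c)=\ev_c\circ \ev_c^\dag$; combining this with the characterization $\ev^2_c = \ev^1_c\circ(\zeta_c\otimes \id_c)$ from Remark~\ref{rem:CoevDeterminesEv} produces the core identity
$$
\dim^{\varphi^2}_L(c)\;=\;\ev^2_c\circ (\ev^2_c)^\dag\;=\;\ev^1_c\circ \bigl((\zeta_c\zeta_c^\dag)\otimes \id_c\bigr)\circ (\ev^1_c)^\dag,
$$
which ties the failure of $\zeta_c$ to be unitary directly to the discrepancy between $\dim^{\varphi^1}_L(c)$ and $\dim^{\varphi^2}_L(c)$.

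For the forward implication I would assume $\zeta_c$ is unitary for all $c$.  Then $\zeta_c\zeta_c^\dag=\id_{c^{\vee_1}}$ and the identity collapses to $\dim^{\varphi^1}_L(c)=\dim^{\varphi^2}_L(c)$ for every $c$.  Lemma~\ref{lem:EquivalentPivotalStructures} then upgrades this to equivalence of pivotal structures, meaning the canonical intertwiner $\Phi_c:c^{\vee_2\vee_2}\to c^{\vee_1\vee_1}$ of~\eqref{eq:EquivalentPivotalStructures} satisfies $\Phi_c\circ \varphi^2_c=\varphi^1_c$.  Solving $\Phi_c=\varphi^1_c\circ (\varphi^2_c)^{-1}$ and using that each $\varphi^i_c$ is unitary by Corollary~\ref{cor:UnitaryPivotal} (so $(\varphi^2_c)^{-1}=(\varphi^2_c)^\dag$) exhibits $\Phi_c$ as a composition of unitaries, hence unitary, finishing unitary equivalence of the pivotal structures.

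For the backward implication I would assume $\varphi^1,\varphi^2$ are unitarily equivalent; in particular equivalent, so Lemma~\ref{lem:EquivalentPivotalStructures} gives $\dim^{\varphi^1}_L(c)=\dim^{\varphi^2}_L(c)$ for every simple $c$.  For such $c$, $c^{\vee_1}$ is also simple, so $\zeta_c\zeta_c^\dag=\lambda_c\id_{c^{\vee_1}}$ for some $\lambda_c>0$ (positivity because $\zeta_c\zeta_c^\dag$ is a positive endomorphism of a simple object).  The identity then reduces to $\dim^{\varphi^2}_L(c)=\lambda_c\dim^{\varphi^1}_L(c)$, and pseudounitarity of unitary pivotal structures (Remark~\ref{rem:UnitaryImpliesPseudounitary}) forces $\lambda_c=1$.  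Thus $\zeta_c$ is an isometry on simples, and being an isomorphism, unitary there; naturality of $\zeta$ together with the fact that the dagger tensor functors $\vee_1,\vee_2$ preserve orthogonal direct sums up to unitary isomorphism extends unitarity of $\zeta_c$ to every $c\in \cC$.

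The principal subtlety I anticipate is the backward direction's reliance on pseudounitarity: strict positivity of $\dim^{\varphi^1}_L(c)$ for simple $c$ is what rules out a nontrivial rescaling $\lambda_c\ne 1$.  A useful side observation, implicit in the forward direction, is that within the unitary setting equivalence of pivotal structures already entails unitary equivalence, so the lemma is fundamentally a three-way translation between $\zeta$-unitarity, equality of left quantum dimensions, and pivotal equivalence, all mediated by Lemma~\ref{lem:EquivalentPivotalStructures}.
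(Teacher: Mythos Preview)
Your proof is correct, but it takes a genuinely different route from the paper's.  The paper argues by a direct chain of equivalences: it rewrites the condition $\zeta_c^{-1}=\zeta_c^\dag$ explicitly in terms of $\ev,\coev$ and their adjoints, and then recognizes the resulting identity as precisely the statement that the canonical double-dual intertwiner of~\eqref{eq:EquivalentPivotalStructures} equals $\varphi^2_c\circ(\varphi^1_c)^{-1}$, which is the definition of unitary equivalence of the pivotal structures.  No appeal to dimensions or to Lemma~\ref{lem:EquivalentPivotalStructures} is made, and no reduction to simple objects is needed.

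Your argument instead passes through left quantum dimensions via the identity $\dim^{\varphi^2}_L(c)=\ev^1_c\circ((\zeta_c\zeta_c^\dag)\otimes\id_c)\circ(\ev^1_c)^\dag$, and then invokes Lemma~\ref{lem:EquivalentPivotalStructures} together with pseudounitarity.  This is a perfectly valid and more conceptual path: it makes transparent that the obstruction to unitarity of $\zeta$ is exactly the ratio of left dimensions.  As you observe, a pleasant byproduct is that in the unitary setting mere \emph{equivalence} of the pivotal structures already forces unitary equivalence of the dual functors (your backward direction only uses equivalence), which is slightly stronger than the lemma as stated.  The cost is the extra machinery (reduction to simples, positivity of dimensions, and the direct-sum extension of unitarity of $\zeta$), whereas the paper's formula-matching is shorter and works uniformly for all objects at once.
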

\begin{proof}
Recall that for $c\in \cC$, $\zeta_c\in \cC(c^{\vee_2} \to c^{\vee_1})$ is the unique natural isomorphism from \eqref{eq:CanonicalIntertwiner}.
Observe that
$\vee_1$ and $\vee_2$ are unitarily equivalent
if and only if
$$
\zeta_c^{-1}
=
(\id_{c^{\vee_2}} \otimes \coev_1^\dag)
\circ
(\ev_2^\dag \otimes \id_{c^{\vee_1}})
=
\zeta_c^\dag
\qquad
\forall c\in \cC
$$
if and only if
$$
\begin{tikzpicture}[baseline=-.1cm]
	\draw[thick, red] (-.4,-.4) -- (-.4,0) arc (180:0:.3cm) arc (0:-180:.1cm);
	\draw[thick] (.4,.4) -- (.4,0) arc (0:-180:.3cm) arc (180:0:.1cm);
\end{tikzpicture}
=
\varphi^2_c\circ (\varphi^1_c)^{-1}
=
(\coev_1^\dag \otimes \id_{c^{\vee_1\vee_1}})
\circ
(\id_c \otimes \coev_{c^{\vee_1}})
\circ
(\ev_{c^{\vee_2}} \otimes \id_c)
\circ
(\id_{c^{\vee_2\vee_2}}\otimes \ev_2^\dag)
\qquad
\forall c\in \cC
$$
if and only if $\varphi^1$ and $\varphi^2$ are unitarily equivalent.
\end{proof}

\subsection{The universal grading groupoid}
\label{sec:GradingGroupoid}

We now adapt \cite[\S4.14]{MR3242743} to the (unitary) multitensor category setting.
For this section, $\cC$ is a multitensor category which is not necessarily unitary.
In the unitary setting, one should add the terms in parentheses, and one may ignore them in the algebraic setting.

Recall that a \emph{groupoid} $\cG$ is a category where all morphisms are invertible.
As the groupoids we consider have only finitely many objects, we will identify $\cG$ with its set of morphisms, and we can recover the objects as the \emph{idempotents}, i.e., those morphisms $e\in \cG$ such that $e\circ e = e$.

\begin{defn}
A \emph{grading} of $\cC$ by a groupid $\cG$ is a decomposition 
$$
\cC= \bigoplus_{g\in \cG} \cC_g
$$
where each $\cC_g\subset \cC$ is a semisimple ($\Cstar$) subcategory such that if $g,h$ are composable, the tensor product maps $\cC_g \times \cC_h$ to $\cC_{gh}$.
When $g,h$ are not composable, the tensor product on $\cC_g \times \cC_h$ is the zero bi-functor.
For every idempotent in $e\in\cG$, $\cC_e$ is a (unitary) multitensor subcategory of $\cC$.
A grading by $\cG$ is called \emph{faithful} if $\cC_g \neq 0$ for all $g\in \cG$.
Gradings are in bijection with gradings of the Grothendieck ring $K_0(\cC)$ as a based ring, where the basis corresponds to the isomorphism classes of simple objects of $\cC$.

Given any two faithful gradings, there is a common faithful refinement, so there exists a universal grading of $\cC$.
We call the groupoid associated to the universal grading of $\cC$ the \emph{universal grading groupoid}, denoted $\cU$.
\end{defn}

\begin{rem}
\label{rem:CanonicalGroupoidSurjection}
If $\cC$ is faithfully graded by $\cG$, then since $\cU$ is a refinement of $\cG$, we get canonical surjective groupoid homomorphism $\cU \twoheadrightarrow\cG$ given by mapping a $u \in \cU$ to the $g\in \cG$ such that $\cC_u \subset \cC_g$.
\end{rem}

\begin{nota}
For $c\in \cC$, we say $c$ is \emph{homogeneous} if $c$ lies in one component subcategory $\cC_g\subset \cC$ for some $g\in \cU$.
For such $c$, we define $\gr(c) := g$.
For an arbitrary $c\in \cC$, we write $c = \bigoplus_{g\in \cU} c_g$ for the canonical (orthogonal) decomposition of $c$ into homogeneous subobjects.
For an $f\in \cC(a\to b)$, we write $f_g \in \cC(a_g \to b_g)$ for the $g$-graded component of $f$.

For a groupoid $\cG$ and an abelian group $A$ (whose group law is still denoted multiplicatively), we denote by $\Hom(\cG \to A)$ the set of functors from $\cG$ to $A$ where the latter is viewed as a groupoid with exactly one object.
Note that $\Hom(\cG \to A)$ is a group under pointwise multiplication and pointwise inversion.
\end{nota}

Recall from Remarks \ref{rem:ClassifyingPivotalStructures} and \ref{rem:ClassifyingPseudounitaryPivotalStructures} that
$\Aut_\otimes(\id_\cC)$ 
is the group of of monoidal natural automorphisms of the identity tensor functor, and $\Aut_\otimes^+(\id_\cC)$ is the subgroup of positive monoidal natural isomorphisms of the identity dagger tensor functor.

\begin{lem}
\label{lem:MonoidalNaturalAutomorphisms}
There is a canonical isomorphism $\Aut_\otimes(\id_\cC)\cong \Hom(\cU \to \bbC^\times)$ which takes the subgroup $\Aut^+_\otimes(\id_\cC)$ onto the subgroup $\Hom(\cU \to \bbR_{>0})$.
\end{lem}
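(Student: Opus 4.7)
The plan is to mimic the tensor category argument from \cite[Cor.~4.14.2]{MR3242743}, adapted to the groupoid setting, via Schur's lemma plus monoidality, and then read off the characterization of the universal grading groupoid in terms of multiplicative scalar-valued functions on $\Irr(\cC)$.

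First I would fix $\eta \in \Aut_\otimes(\id_\cC)$. For each simple $c\in \cC$, Schur's lemma (valid since $\cC$ is semisimple with $\cC(c\to c)=\bbC\id_c$) yields $\eta_c = \lambda(c)\,\id_c$ for a unique $\lambda(c)\in \bbC^\times$; naturality implies $\lambda(c)$ depends only on the isomorphism class of $c$. Monoidality, together with naturality applied to any inclusion of a simple summand $\iota:c\hookrightarrow a\otimes b$ with $a,b$ simple, gives
\[
\lambda(c)\,\iota \;=\; \eta_{a\otimes b}\circ \iota \;=\; (\eta_a\otimes \eta_b)\circ\iota \;=\; \lambda(a)\lambda(b)\,\iota,
\]
so $\lambda(c)=\lambda(a)\lambda(b)$ whenever $c$ is a summand of $a\otimes b$. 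In particular, evaluated on the minimal idempotents $1_i$ of $1_\cC$, the relation $1_i\otimes 1_i\cong 1_i$ forces $\lambda(1_i)=1$, and the analogous relation $1_i\otimes c\otimes 1_j\cong c$ for any simple $c\in 1_i\otimes\cC\otimes 1_j$ is automatic.

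Second, I would recall that by the definition of the universal grading, a grading of $\cC$ by a groupoid $\cG$ is the same datum as a function $\Irr(\cC)\to \Mor(\cG)$ compatible with tensor products and sources/targets of identity objects; the universal grading groupoid $\cU$ is characterized by the fact that any such compatible scalar-valued function on $\Irr(\cC)$ factors uniquely through $\cU$. Concretely, the assignment $c\mapsto \lambda(c)$ above is precisely such a compatible function into $\bbC^\times$ (viewed as a one-object groupoid), so by universality it factors uniquely as $\lambda(c)=\pi_\eta(\gr(c))$ for a groupoid homomorphism $\pi_\eta:\cU\to \bbC^\times$. This produces the map $\eta\mapsto \pi_\eta$, which is visibly a group homomorphism.

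For the inverse, given $\pi\in\Hom(\cU\to \bbC^\times)$, I would define $\eta^\pi_c:=\bigoplus_{g\in \cU}\pi(g)\,\id_{c_g}$ using the canonical homogeneous decomposition $c=\bigoplus_g c_g$. Naturality follows because any $f\in\cC(a\to b)$ decomposes as $f=\bigoplus_g f_g$ with $f_g\in\cC(a_g\to b_g)$ and scalars commute with $f_g$; monoidality follows because $(a\otimes b)_{gh}\cong \bigoplus_{g'h'=gh}a_{g'}\otimes b_{h'}$ and $\pi(gh)=\pi(g)\pi(h)$. The two constructions are manifestly mutually inverse, giving the asserted isomorphism $\Aut_\otimes(\id_\cC)\cong \Hom(\cU\to\bbC^\times)$. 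Finally, for the positivity statement: by the definition of $\Aut^+_\otimes(\id_\cC)$ (Remark \ref{rem:ClassifyingPseudounitaryPivotalStructures}), $\eta\in\Aut^+_\otimes(\id_\cC)$ iff $\lambda(c)=\pi_\eta(\gr(c))\in\bbR_{>0}$ for every simple $c$, iff $\pi_\eta$ factors through $\bbR_{>0}\subset \bbC^\times$; conversely, such a $\pi$ evidently produces a positive $\eta^\pi$.

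The only nontrivial step is invoking the universal property of $\cU$ in the groupoid setting to extract a well-defined $\pi_\eta$ from $\lambda$. If one prefers a self-contained argument, one can instead \emph{construct} $\cU$ as the quotient of the free groupoid on $\Irr(\cC)$ (with composition coming from tensor products of simples) by the relations forced by isomorphism classes of simple summands in tensor products; then the multiplicativity relation $\lambda(c)=\lambda(a)\lambda(b)$ makes $\lambda$ a groupoid homomorphism out of this quotient by construction.
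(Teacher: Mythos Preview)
Your proposal is correct and follows essentially the same approach as the paper's proof: both extract a scalar $\lambda(c)\in\bbC^\times$ from $\eta_c$ via Schur's lemma, observe that monoidality makes $c\mapsto\lambda(c)$ define a grading of $\cC$ by $\bbC^\times$, and then invoke the universal property of $\cU$ to obtain a homomorphism $\cU\to\bbC^\times$; the positivity statement is handled identically. The paper's proof is terser---it simply asserts that $\zeta\mapsto f_\zeta$ is an isomorphism without writing down the inverse---whereas you spell out the inverse explicitly via $\eta^\pi_c:=\bigoplus_g\pi(g)\,\id_{c_g}$ and verify naturality and monoidality, which is a useful addition but not a different idea.
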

\begin{proof}
Given $\zeta \in \Aut_\otimes(\id_\cC)$, we get a grading of $\cC$ by $\bbC^\times$ by assigning to each simple $c\in \cC$ the number corresponding to $\zeta_c \in \cC(c\to c) = \bbC \id_c$.
This gives us a homomorphism $f_\zeta : \cU \to \bbC^\times$ by universality of $\cU$.
One now checks the map $\zeta \mapsto f_\zeta$ is an isomorphism.
Finally, $\zeta \in \Aut^+_\otimes(\id_\cC)$ if and only if $\zeta_c\in \bbR_{>0}\id_c$ for all simple $c\in \cC$ if and only if $\im(f_\zeta) \subset \bbR_{>0}$.
\end{proof}

For the convenience of the reader, we provide the lemma below which, in the presence of a pivotal structure $(\vee, \varphi)$, provides an explicit bijection between the torsor of pivotal structures on $\cC$ and $\Hom(\cU \to \bbC^\times)$ obtained by combining Remark \ref{rem:ClassifyingPivotalStructures} and Lemma \ref{lem:MonoidalNaturalAutomorphisms} above.
Moreover, in the presence of a pseudounitary pivotal structure, this bijection restricts to a bijection between the torsor of pseudounitary pivotal structures and $\Hom(\cU \to \bbR_{>0})$ obtained by combining Remark \ref{rem:ClassifyingPseudounitaryPivotalStructures} with Lemma \ref{lem:MonoidalNaturalAutomorphisms}.

\begin{lem}
\label{lem:RatioHomormophism}
Suppose $(\vee, \varphi)$ is a pivotal structure on a semisimple multitensor category $\cC$.
Defining for a simple $c\in \cC$ 
\begin{equation}
\label{eq:RatioHomomorphism}
\pi(\gr(c))
:=
\frac{\dim^\varphi_L(c)}{\dim^\varphi_R(c)}
\end{equation}
gives a well defined a homormophism $\pi: \cU \to \bbC^\times$.
If $(\vee, \varphi)$ is pseudounitary, then $\operatorname{im}(\pi)\subset \bbR_{>0}$.
\end{lem}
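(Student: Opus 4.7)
My approach is to construct $\pi$ by exhibiting a monoidal natural automorphism of $\id_\cC$ whose associated character, under the isomorphism $\Aut_\otimes(\id_\cC) \cong \Hom(\cU \to \bbC^\times)$ of Lemma \ref{lem:MonoidalNaturalAutomorphisms}, is the required $\pi$. First, by Corollary \ref{cor:NonzeroDimensionsForSimples}, for every simple $c$ both $\dim^\varphi_L(c)$ and $\dim^\varphi_R(c)$ lie in $\bbC^\times$, so the scalar $\tilde\pi(c) := \dim^\varphi_L(c)/\dim^\varphi_R(c)$ is well-defined on isomorphism classes of simples. Setting $\zeta_c := \tilde\pi(c) \cdot \id_c$ on simples and extending additively to all objects gives a natural automorphism $\zeta : \id_\cC \Rightarrow \id_\cC$ (naturality is automatic since $\zeta_c$ is a scalar on each simple).

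The main task is to verify that $\zeta$ is monoidal, which amounts to the \emph{fusion-multiplicativity} of $\tilde\pi$: for simple $c,d$ with $c\otimes d \neq 0$ and every simple summand $e$ of $c \otimes d$, one has $\tilde\pi(e) = \tilde\pi(c)\tilde\pi(d)$. To establish this, I would pick $\iota \in \cC(e \to c\otimes d)$ and $\varpi \in \cC(c\otimes d \to e)$ with $\varpi \circ \iota = \id_e$ so that $p := \iota \circ \varpi$ is an idempotent isolating one copy of $e$ inside $c\otimes d$. On the one hand, the trace property gives $\tr^\varphi_L(p) = \tr^\varphi_L(\varpi\circ\iota) = \dim^\varphi_L(e)$. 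On the other hand, expanding $\tr^\varphi_L(p)$ diagrammatically and invoking monoidality of $\varphi$, which factors $\varphi_{c\otimes d}$ through $\varphi_c \otimes \varphi_d$ via the canonical tensorator $\nu$ from \eqref{eq:CanonicalTensorator}, rewrites $\tr^\varphi_L(p)$ as a contraction of $\iota,\varpi$ against the separate $L$-loops for $c$ and $d$. An identical computation for $\tr^\varphi_R(p) = \dim^\varphi_R(e)$ produces the same $\iota,\varpi$-contraction, but with the separate $R$-loops for $c$ and $d$ in place of the $L$-loops. Taking the ratio, the $\iota,\varpi$-contraction cancels and one is left with $\tilde\pi(e) = \tilde\pi(c)\tilde\pi(d)$. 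Carrying out this diagrammatic cancellation carefully in the multitensor setting, where one must track the matrix indices on the unit summands and the gradings of each strand, is the main obstacle.

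With monoidality of $\zeta$ in hand, Lemma \ref{lem:MonoidalNaturalAutomorphisms} yields a homomorphism $\pi \in \Hom(\cU \to \bbC^\times)$ satisfying $\pi(\gr(c)) \cdot \id_c = \zeta_c$ for every simple $c$, which is precisely \eqref{eq:RatioHomomorphism}. Well-definedness follows since $\tilde\pi$ factors through $\gr$, and compatibility with groupoid inversion is the sanity check $\tilde\pi(c^\vee) = \dim^\varphi_R(c)/\dim^\varphi_L(c) = \tilde\pi(c)^{-1}$, using the identities $\dim^\varphi_L(c^\vee) = \dim^\varphi_R(c)$ noted after \eqref{eq:SphericalDefinition}. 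For the pseudounitary case, positivity of both $\dim^\varphi_L(c)$ and $\dim^\varphi_R(c)$ on simples forces $\tilde\pi(c) > 0$, so $\zeta \in \Aut^+_\otimes(\id_\cC)$ and Lemma \ref{lem:MonoidalNaturalAutomorphisms} places $\operatorname{im}(\pi)$ inside $\bbR_{>0}$.
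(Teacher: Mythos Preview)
Your overall strategy---constructing $\zeta \in \Aut_\otimes(\id_\cC)$ from the ratios and invoking Lemma \ref{lem:MonoidalNaturalAutomorphisms}---is a reasonable repackaging, but the entire content of the lemma is concentrated in your ``fusion-multiplicativity'' step, and the argument you sketch for it is circular. You assert that $\tr^\varphi_L(p)$ and $\tr^\varphi_R(p)$ each factor as \emph{the same} ``$\iota,\varpi$-contraction'' times the respective products of loop values, so that this common factor cancels in the ratio. But unpack what that means: if $\tr^\varphi_L(p)=K_L\cdot\dim^\varphi_L(c)\dim^\varphi_L(d)$ and $\tr^\varphi_R(p)=K_R\cdot\dim^\varphi_R(c)\dim^\varphi_R(d)$, then since $\tr^\varphi_L(p)=\dim^\varphi_L(e)$ and $\tr^\varphi_R(p)=\dim^\varphi_R(e)$ you have $K_L=\dim^\varphi_L(e)/(\dim^\varphi_L(c)\dim^\varphi_L(d))$ and $K_R=\dim^\varphi_R(e)/(\dim^\varphi_R(c)\dim^\varphi_R(d))$. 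The equality $K_L=K_R$ \emph{is} the identity $\tilde\pi(e)=\tilde\pi(c)\tilde\pi(d)$ you want to establish. The nested partial traces you describe do not manufacture a common scalar: closing $p$ on the left versus the right produces genuinely different intermediate morphisms in $\cC(d\to d)$ and $\cC(c\to c)$, and nothing you have written forces their scalar values to agree. Your own remark that ``carrying out this diagrammatic cancellation carefully \ldots\ is the main obstacle'' is an accurate self-diagnosis---that obstacle is the whole lemma.

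The paper avoids this circularity by separating two ingredients. First, the matrix-valued dimensions $\Dim^\varphi_L,\Dim^\varphi_R:K_0(\cC)\to M_r(\bbC)$ are ring homomorphisms, so for simples $c,d$ with $c\otimes d\neq 0$ one has $\dim^\varphi_L(c\otimes d)=\dim^\varphi_L(c)\dim^\varphi_L(d)$ and likewise for $R$. Second---and this is the idea you are missing---well-definedness is handled independently: if $\gr(c)=\gr(d)$ then $c\otimes d^\vee$ lies in the identity component $\cC_e$, which is a spherical tensor category, so $\dim^\varphi_L(c\otimes d^\vee)=\dim^\varphi_R(c\otimes d^\vee)$; expanding via the first ingredient and $\dim^\varphi_L(d^\vee)=\dim^\varphi_R(d)$ gives $\tilde\pi(c)=\tilde\pi(d)$. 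Once $\tilde\pi$ is known to be constant on each $\cU$-graded piece, multiplicativity follows immediately from the first ingredient by summing over the simple summands of $c\otimes d$, with no per-summand diagrammatic cancellation required.
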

\begin{proof}
First, note that for a simple $c\in \cC$, $\dim^\varphi_L(c) \neq 0 \neq \dim_R^\varphi(c)$ by Corollary \ref{cor:NonzeroDimensionsForSimples}.
Next, if $c, d\in \cC$ are simple such that $c\otimes d \neq 0$, 
\begin{equation}
\label{eq:ComposableMorphismsMultiplicativeDimension}
\Dim_L^\varphi(c)\Dim_L^\varphi(d) = \Dim_L^\varphi(c\otimes d)
\qquad
\Longrightarrow
\qquad
\dim_L^\varphi(c\otimes d) = \dim_L^\varphi(c) \dim_L^\varphi(d),
\end{equation}
and similarly for the right dimension.
If moreover $\gr(c) = \gr(d)$, then $e:=\gr(c\otimes d^\vee)$ is an idempotent in $\cU$ (an identity morphism).
Since $(\vee, \varphi)$ restricted to $\cC_e$ gives a spherical tensor category, we have
$$
\dim_L^\varphi(c \otimes d^\vee) 
= 
\dim_R^\varphi(c \otimes d^\vee)
\qquad
\Longleftrightarrow
\qquad
\frac{\dim^\varphi_L(c)}{\dim^\varphi_R(c)}
=
\frac{\dim^\varphi_L(d)}{\dim^\varphi_R(d)},
$$
and $\pi$ is well-defined.
Now \eqref{eq:ComposableMorphismsMultiplicativeDimension} immediately implies that $\pi$ is a homomorphism.
The last claim is obvious.
\end{proof}

\subsection{Balanced duals}
\label{sec:BalancedDuals}

In this section, $\cC$ is a unitary multitensor category.
The following definition was suggested by Andr\'{e} Henriques.

\begin{defn}
\label{def:PhiBalancedSolutions}
Let $\pi : \cU \to \bbR_{>0}$ be a groupoid homomorphism.
Denote by $\Psi$ the linear functional in $\cC(1\to 1) \to \bbC$ which sends every minimal projection to $1_\bbC$.
A $\pi$-\emph{balanced dual} of $c\in \cC$ is a triple $(c^{\vee_\pi}, \ev^\pi_c, \coev^\pi_c)$ such that the morphisms $\ev^\pi_c, \coev^\pi_c$ satisfy the zig-zag axioms and the $\pi$-\emph{balancing condition}: 
for all $f\in \cC(c \to c)$ and $g\in \cU$
\begin{equation}
\label{eq:PhiBalancing}
\Psi\left(
\ev^\pi_c \circ (\id_{c^{\vee_\pi}}\otimes f_{g}) \circ (\ev^\pi_c)^\dag
\right)
=
\pi(g)
\cdot
\Psi\left(
(\coev^\pi_c)^\dag \circ (f_{g}\otimes \id_{c^{\vee_\pi}}) \circ \coev^\pi_c
\right).
\end{equation}
A unitary dual functor $\vee_\pi$ is called \emph{$\pi$-balanced} if \eqref{eq:PhiBalancing} holds for all $c\in \cC$, $f\in \cC(c\to c)$, and $g\in \cU$.

If $\pi(g)=1$ for all $g\in \cU$, we omit $\pi$ from the notation;
we simply say $(c^{\vee}, \ev_c, \coev_c)$ is a \emph{balanced} dual which satisfies the zig-zag axioms and the \emph{balancing condition}.
A dual functor $\vee$ is \emph{balanced} if \eqref{eq:PhiBalancing} holds with $\pi(g) =1$ for all $c\in \cC$, $f\in \cC(c\to c)$, and $g\in \cU$.
\end{defn}

Our next task is to construct a $\pi$-balanced unitary dual functor for every $\pi \in \Hom(\cU \to \bbR_{>0})$.
To do so, we will use the following facts from \cite{MR2091457}.

\begin{fact}[{\cite[Lem.~3.6]{MR2091457}}]
\label{fact:Faithful}
Suppose $(c^\vee, \ev_c, \coev_c)$ is an arbitrary dual of $c\in \cC$.
The positive map $\cC(c\to c) \to \cC(1\to 1)$ given by $f\mapsto \ev_c\circ (\id_{c^\vee} \otimes f) \circ \ev_c^\dag$ is \emph{faithful}:
for any $g\in \cC(c\to d)$, 
$$
\ev_c\circ (\id_{c^\vee} \otimes (g^\dag \circ g)) \circ \ev_c^\dag = 0
\Longleftrightarrow
(\id_{c^\vee} \otimes \circ g) \circ \ev_c^\dag = 0
\Longleftrightarrow
g=0.
$$
Similarly, $f\mapsto \coev_c^\dag\circ (f\otimes \id_{c^\vee}) \circ \coev_c$ is faithful.\footnote{
More is true:
given a morphism $\varepsilon \in \cC(c^\vee\otimes c\to 1_\cC)$, the map 
$f\mapsto \epsilon\circ (\id_{c^\vee} \otimes f) \circ \epsilon^\dag$ 
is faithful if and only if there is a morphism $\eta \in \cC(1_\cC\to c\otimes c^\vee)$ such that $(c^\vee, \varepsilon, \eta)$ is a dual of $c$.
This is proven in \cite[Lem.~3.6]{MR2091457} where the condition that $1_\cC$ is simple is never used.
A similar statement holds swapping $\varepsilon$ and $\eta$.
}
\end{fact}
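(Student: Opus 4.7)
The plan is to prove the $\ev$-version; the $\coev$-version is entirely symmetric. I split the chain of equivalences into two independent steps. For the first equivalence, set $h := (\id_{c^\vee} \otimes g) \circ \ev_c^\dag \in \cC(1 \to c^\vee \otimes d)$; the dagger and tensor axioms give
$$\ev_c \circ (\id_{c^\vee} \otimes (g^\dag \circ g)) \circ \ev_c^\dag = h^\dag \circ h,$$
and the $\Cstar$-identity $\|h^\dag h\| = \|h\|^2$ yields $h^\dag h = 0 \Leftrightarrow h = 0$.

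For the second equivalence, the forward implication is immediate. For the converse, I take adjoints to replace $h = 0$ by $h^\dag = \ev_c \circ (\id_{c^\vee} \otimes g^\dag) = 0$ where $g^\dag : d \to c$, and then invoke the zig-zag identity $(\id_c \otimes \ev_c) \circ (\coev_c \otimes \id_c) = \id_c$. With a few applications of the interchange law to slide $g^\dag$ past tensor products, I would derive the explicit recovery formula
$$g^\dag = (\id_c \otimes [\ev_c \circ (\id_{c^\vee} \otimes g^\dag)]) \circ (\coev_c \otimes \id_d),$$
so that $h^\dag = 0$ forces $g^\dag = 0$ and hence $g = 0$. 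Conceptually this is just the zig-zag witnessing the tensor-hom adjunction $c \otimes (-) \dashv c^\vee \otimes (-)$ on $\Hom$-spaces.

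For the $\coev$ version, I set $k := (g \otimes \id_{c^\vee}) \circ \coev_c$ and run the identical $\Cstar$-identity argument for the first equivalence; for the second, one can even skip the adjoint step, since the \emph{same} zig-zag gives the direct recovery $g = (\id_d \otimes \ev_c) \circ (k \otimes \id_c)$, whence $k = 0 \Rightarrow g = 0$. The only genuinely delicate point in the whole argument is the bookkeeping of the interchange law during the zig-zag manipulation; beyond that, no obstacle presents itself, as the result is forced by the rigid and dagger structures.
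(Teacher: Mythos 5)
The paper does not give its own proof here; the statement is recorded as a Fact cited to [MR2091457, Lem.~3.6]. Your argument is correct and is the standard one: the first equivalence is the $\Cstar$-identity applied to $h := (\id_{c^\vee}\otimes g)\circ\ev_c^\dag$, which works because $\cC$ is a $\Cstar$ category so $\|\cdot\|$ is a genuine norm and $h^\dag h=0$ forces $h=0$; the second equivalence is the zig-zag producing the explicit recovery $g^\dag = (\id_c\otimes h^\dag)\circ(\coev_c\otimes\id_d)$, and I checked that this expression does simplify to $g^\dag$ after the interchange-law manipulations you describe. The $\coev$ version is symmetric, and you are right that one may skip the adjoint step there since $g = (\id_d\otimes\ev_c)\circ(k\otimes\id_c)$ directly. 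One small terminological slip: you call $g=0\Rightarrow h=0$ the ``forward'' implication of the second equivalence, but in the chain as written ($h^\dag h=0\iff h=0\iff g=0$) the forward direction of the second arrow would be $h=0\Rightarrow g=0$; this is a labeling matter only and does not affect the correctness of the proof.
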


\begin{fact}[{\cite[Lem.~3.7.ii]{MR2091457}, \cite[Prop.~2.6]{MR3308880}}]
\label{fact:DagAndVeeCompatibility}
For $a,b\in \cC$ with choices of arbitrary duals $(a^\vee, \ev_a, \coev_a), (b^\vee, \ev_b,\coev_b)$ respectively, the following are equivalent:
\begin{enumerate}[(1)]
\item
For all $f\in \cC(a\to b)$, $f^{\vee\dag} = f^{\dag\vee}$.
\item
For all $g\in \cC(b\to a)$, $g^{\vee\dag} = g^{\dag\vee}$.
\item
For all $a,b\in\cC$, $f\in \cC(a\to b)$, and $g\in \cC(b\to a)$,
\begin{equation}
\label{eq:EvaluationFunctional}
\ev_a \circ (\id_{a^\vee} \otimes (g\circ f)) \circ \ev_a^\dag
=
\ev_b \circ (\id_{b^\vee} \otimes (f\circ g)) \circ \ev_b^\dag.
\end{equation}
\item
For all $a,b\in\cC$, $f\in \cC(a\to b)$, and $g\in \cC(b\to a)$,
\begin{equation}
\label{eq:CoevaluationFunctional}
\coev_a^\dag \circ ((g\circ f) \otimes \id_{a^\vee} ) \circ \coev_a
=
\coev_b^\dag \circ ((f\circ g)\otimes \id_{b^\vee} ) \circ \coev_b.
\end{equation}
\end{enumerate}
\end{fact}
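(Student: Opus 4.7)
My strategy is to establish $(1) \Leftrightarrow (2)$, then $(1) \Leftrightarrow (3)$ and $(1) \Leftrightarrow (4)$ in parallel using the $\ev$/$\coev$ symmetry. The equivalence $(1) \Leftrightarrow (2)$ is immediate: the map $f \mapsto f^\dag$ is a bijection $\cC(a \to b) \to \cC(b \to a)$, and $(f^\dag)^{\vee\dag} = (f^\dag)^{\dag\vee}$ is precisely the dagger of $f^{\vee\dag} = f^{\dag\vee}$.

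For $(1) \Rightarrow (3)$, I plan a direct diagrammatic computation. The purely rigid naturality identity $\ev_b \circ (\id_{b^\vee} \otimes h) = \ev_a \circ (h^\vee \otimes \id_a)$ for $h: a \to b$, applied to $h = f^\dag: b \to a$ and then daggered, yields $(\id_{a^\vee} \otimes f) \circ \ev_a^\dag = (f^{\dag\vee\dag} \otimes \id_b) \circ \ev_b^\dag$. Invoking (1)/(2) to rewrite $f^{\dag\vee\dag} = f^\vee$ gives $(\id_{a^\vee} \otimes f) \circ \ev_a^\dag = (f^\vee \otimes \id_b) \circ \ev_b^\dag$. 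Then
\begin{align*}
\ev_a \circ (\id_{a^\vee} \otimes (g \circ f)) \circ \ev_a^\dag
&= \ev_b \circ (g^\vee \otimes \id_b) \circ (\id_{a^\vee} \otimes f) \circ \ev_a^\dag \\
&= \ev_b \circ ((g^\vee \circ f^\vee) \otimes \id_b) \circ \ev_b^\dag \\
&= \ev_b \circ (\id_{b^\vee} \otimes (f \circ g)) \circ \ev_b^\dag,
\end{align*}
which is (3).

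The reverse $(3) \Rightarrow (1)$ will be the hardest step, since one must extract a Hom-space equality from a scalar equality; the natural tool is the faithfulness recorded in Fact~\ref{fact:Faithful}. Setting $\delta_f := f^{\vee\dag} - f^{\dag\vee} \in \cC(a^\vee \to b^\vee)$, I aim to show $\delta_f = 0$. Combining the dagger of (3) with the instance of (3) applied to the pair $(g^\dag, f^\dag)$ produces the bilinear identity
\[
\ev_b \circ (\delta_f \otimes g^\dag) \circ \ev_a^\dag
=
\ev_a \circ (\delta_g \otimes f^\dag) \circ \ev_b^\dag
\qquad \forall f: a \to b,\, g: b \to a.
\]
Specialising to $g = f^\dag$, and using that $\delta_{f^\dag} = -\delta_f^\dag$, this turns into a self-adjointness constraint on $\ev_b \circ (\delta_f \otimes f) \circ \ev_a^\dag \in \cC(1 \to 1)$. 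Combining this with the positivity and faithfulness of Fact~\ref{fact:Faithful} applied to $\delta_f^\dag \circ \delta_f$ through chosen duals for $a^\vee$ then forces $\delta_f = 0$ for every $f$.

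Finally, the equivalence $(1) \Leftrightarrow (4)$ runs in parallel, replacing the $\ev$-naturality identity with the $\coev$ counterpart $(f \otimes \id_{a^\vee}) \circ \coev_a = (\id_b \otimes f^\vee) \circ \coev_b$ (and its dagger), and invoking the coevaluation form of Fact~\ref{fact:Faithful} in place of the evaluation form. The main obstacle is exactly the passage $(3) \Rightarrow (1)$: the forward direction is a routine graphical manipulation, but recovering the morphism-level compatibility $f^{\vee\dag} = f^{\dag\vee}$ from a trace-form identity genuinely requires the positive-definiteness of the evaluation functional on hom spaces.
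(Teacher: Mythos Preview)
Your treatment of $(1)\Leftrightarrow(2)$ and of the forward direction $(1)\Rightarrow(3)$ is fine and matches the paper's approach.

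The gap is in $(3)\Rightarrow(1)$. Your proposed first step is to ``combine the dagger of (3) with the instance of (3) applied to the pair $(g^\dag,f^\dag)$.'' But these two equations are \emph{identical}: taking the dagger of
\[
\ev_a\circ(\id\otimes gf)\circ\ev_a^\dag=\ev_b\circ(\id\otimes fg)\circ\ev_b^\dag
\]
gives
\[
\ev_a\circ(\id\otimes f^\dag g^\dag)\circ\ev_a^\dag=\ev_b\circ(\id\otimes g^\dag f^\dag)\circ\ev_b^\dag,
\]
which is exactly (3) with $f'=g^\dag\in\cC(a\to b)$ and $g'=f^\dag\in\cC(b\to a)$. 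So this combination produces no new information, and the bilinear identity you wrote is not obtained that way. The subsequent sketch (self-adjointness constraint, then positivity applied to $\delta_f^\dag\delta_f$ through a chosen dual for $a^\vee$) is too vague to assess, and in any case rests on an identity you have not established.

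The paper's argument avoids introducing $\delta_f$ entirely. It uses the arbitrariness of $f$ (not of $g$) as the ``test function.'' By pure rigid naturality one always has
\[
\ev_a\circ(\id_{a^\vee}\otimes gf)\circ\ev_a^\dag=\ev_b\circ(g^\vee\otimes f)\circ\ev_a^\dag,
\]
so (3) is equivalent to
\[
\ev_b\circ(\id_{b^\vee}\otimes f)\circ\bigl[(g^\vee\otimes\id_a)\circ\ev_a^\dag\bigr]
=
\ev_b\circ(\id_{b^\vee}\otimes f)\circ\bigl[(\id_{b^\vee}\otimes g)\circ\ev_b^\dag\bigr]
\qquad\forall f.
\]
Faithfulness (Fact~\ref{fact:Faithful}) lets you cancel the $\ev_b\circ(\id\otimes f)$ over all $f$, yielding the single morphism identity $(g^\vee\otimes\id_a)\circ\ev_a^\dag=(\id_{b^\vee}\otimes g)\circ\ev_b^\dag$, which after one zig-zag and one dagger is exactly $g^{\vee\dag}=g^{\dag\vee}$. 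This gives $(3)\Leftrightarrow(2)$ directly, with $(1)\Leftrightarrow(4)$ entirely parallel. No positivity argument on $\delta_f^\dag\delta_f$, and no auxiliary dual for $a^\vee$, is needed.
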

\begin{proof}
\mbox{}
\item[\underline{$(1)\Rightarrow (2)$:}]
Suppose (1) holds and $g\in \cC(b\to a)$.
Then $g^\dag \in \cC(a\to b)$, so
$$
g^{\dag\vee} 
=
(g^{\dag})^{\vee\dag\dag}
=
(g^{\dag})^{\dag\vee\dag}
=
g^{\vee\dag}.
$$

\item[\underline{$(2)\Rightarrow (1)$:}]
Analogous to $(1)\Rightarrow (2)$.

\item[\underline{$(2)\Leftrightarrow (3)$:}]
This is similar to the proof of \cite[Lem.~3.7.ii]{MR2091457}, which does not require that $1_\cC$ is simple or that $a=b$.
(Indeed, \cite[Prop.~2.6]{MR3308880} does not assume $a=b$.)
In more detail, observe that for $f\in \cC(a\to b)$ and $g\in \cC(b\to a)$,
$$
\ev_a \circ (\id_{a^\vee} \otimes (g\circ f)) \circ \ev_a^\dag
=
\ev_b \circ (g^\vee \otimes f) \circ \ev_a^\dag.
$$  
By faithfulness from Fact \ref{fact:Faithful}, the above is equal to the right hand side of \eqref{eq:EvaluationFunctional} if and only if 
$$
(\coev^\dag_b\otimes \id_a)
\circ
(\id_b \otimes g^\vee \otimes \id_a)
\circ
(\id_b \otimes \ev^\dag_a)
=
g
\qquad
\Longleftrightarrow
\qquad
g^{\vee\dag} = g^{\dag\vee}.
$$

\item[\underline{$(1)\Leftrightarrow (4)$:}]
This is similar to $(2)\Leftrightarrow (3)$ and left to the reader.
\end{proof}

\begin{prop}
\label{prop:ExistsUniqueBalancedDual}
Fix a groupoid homomorphism $\pi : \cU \to \bbR_{>0}$.
For each $c\in \cC$, there exists a unique $\pi$-balanced dual $(c^{\vee_\pi}, \ev^\pi_c, \coev^\pi_c)$ up to unique unitary isomorphism.
\end{prop}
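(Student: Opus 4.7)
The plan is to establish existence by a rescaling argument reducing to the case of simple objects, and then to obtain uniqueness by showing that the canonical intertwiner between any two $\pi$-balanced duals must be unitary.

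For existence, I would first note that starting from any dual $(c^\vee, \ev_c, \coev_c)$, the rescaling $(\ev_c, \coev_c)\mapsto (\lambda \ev_c, \lambda^{-1}\coev_c)$ preserves the zig-zag axioms for every $\lambda\in\bbC^\times$, giving me a free positive parameter $|\lambda|$ to play with. When $c$ is simple and homogeneous with $\gr(c)=g$, the algebra $\cC(c\to c)=\bbC\id_c$ is one-dimensional, so the $\pi$-balancing condition \eqref{eq:PhiBalancing} applied to $\id_c$ collapses to the single scalar equation
\[
|\lambda|^4\,\Psi\bigl(\ev_c\circ\ev_c^\dag\bigr) \;=\; \pi(g)\,\Psi\bigl(\coev_c^\dag\circ \coev_c\bigr).
\]
Since $\Psi$ is positive and faithful on $\cC(1_\cC\to 1_\cC)$, and both $\Psi(\ev_c\circ\ev_c^\dag)$ and $\Psi(\coev_c^\dag\circ\coev_c)$ are strictly positive by Fact \ref{fact:Faithful} (each is the $\Psi$-image of a strictly positive element), the right-hand side is strictly positive and there is a unique positive $|\lambda|$ solving the equation.

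Next, I would extend the construction to general $c\in\cC$ via orthogonal direct sums. Decompose $c=\bigoplus_g c_g$ into its homogeneous components and each $c_g$ further into an orthogonal sum of simples; then build $(c^{\vee_\pi}, \ev^\pi_c, \coev^\pi_c)$ as the orthogonal direct sum of the $\pi$-balanced duals obtained above. The zig-zag axioms are preserved under orthogonal direct sums by the standard argument, and the balancing condition is additive and vanishes on cross-grade summands, so it reduces to the grade-by-grade balancing already verified.

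For uniqueness, let $(c^{\vee_\pi,i}, \ev^{\pi,i}_c, \coev^{\pi,i}_c)$ for $i=1,2$ be two $\pi$-balanced duals, and let $\zeta\in\cC(c^{\vee_\pi,2}\to c^{\vee_\pi,1})$ be the unique invertible intertwiner from Remark \ref{rem:CoevDeterminesEv}. Setting $T:=\zeta\zeta^\dag\in\cC(c^{\vee_\pi,1}\to c^{\vee_\pi,1})$, which is positive invertible, substituting $\ev^{\pi,2}_c = \ev^{\pi,1}_c\circ(\zeta\otimes\id_c)$ and $\coev^{\pi,2}_c = (\id_c\otimes \zeta^{-1})\circ\coev^{\pi,1}_c$ into the balancing condition for dual $2$ yields
\[
\Psi\!\left(\ev^{\pi,1}_c\circ (T\otimes f_g)\circ (\ev^{\pi,1}_c)^\dag\right)
\;=\; \pi(g)\,\Psi\!\left((\coev^{\pi,1}_c)^\dag\circ (f_g\otimes T^{-1})\circ\coev^{\pi,1}_c\right)
\]
for every $g\in\cU$ and $f\in\cC(c\to c)$. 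Comparing with the balancing condition for dual $1$ applied to the same $f_g$, and using that $T$ is homogeneous of grade equal to the identity at $\gr(c)$ and commutes past $f_g$ via rigidity (namely, $T\otimes\id_c$ and $\id_{c^{\vee_\pi,1}}\otimes T'$ induce the same linear form inside $\ev^{\pi,1}_c(\cdot)(\ev^{\pi,1}_c)^\dag$ for a unique $T'\in\cC(c\to c)$), the faithfulness statements in Fact \ref{fact:Faithful} force $T=\id$, i.e., $\zeta$ is unitary. Uniqueness of $\zeta$ among unitaries is then immediate from the uniqueness in Remark \ref{rem:CoevDeterminesEv}.

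The main obstacle I anticipate is the final step: cleanly converting the two balancing identities into a statement that $T=\id$. The delicate point is that $T$ lives on $c^{\vee_\pi,1}$ while $f_g$ lives on $c$, so one must transfer $T$ to the $c$-side through the dual functor and then invoke faithfulness of both $\omega_\ev$ and $\omega_\coev$; decomposing $T$ according to the grading of $c^{\vee_\pi,1}$ and arguing one grade at a time should make this manageable.
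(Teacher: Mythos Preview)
Your existence argument is the paper's: rescale on simples, then assemble via orthogonal direct sums. The paper is more explicit, writing $\ev_c^\pi=\sum_i\ev_a^\pi\circ(w_i^\dag\otimes v_i^\dag)$ and $\coev_c^\pi=\sum_i(v_i\otimes w_i)\circ\coev_a^\pi$ for an isotypic $c\cong a^{\oplus n}$ via isometries $v_i,w_i$, but the content is the same.

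For uniqueness the two arguments diverge. The paper builds an explicit unitary $U:c^\vee\to c^{\vee_\pi}$ from the matrix entries of a unitary $u\in\cC(c\to c)$ intertwining two systems of isometries, and checks that $U$ coincides with the canonical intertwiner. You instead try to show directly that $T:=\zeta\zeta^\dag=\id$. This route does work, but the mechanism you sketch (transfer $T$ to the $c$-side via rigidity and invoke Fact~\ref{fact:Faithful}) is not the right lever and is where your gap lies: the transfer you want uses that $\vee_1$ is a dagger functor, which is only established later (Corollary~\ref{cor:ExistsBalancedUnitaryDualFunctors}) and would be circular here. What actually closes the argument is this. Take dual~1 to be the one constructed in your existence step; a direct computation with the isometries $v_i,w_i$ shows the \emph{bilinear} identity
\[
\Psi\bigl(\ev_1\circ(S\otimes f)\circ\ev_1^\dag\bigr)=\pi(g)\,\Psi\bigl(\coev_1^\dag\circ(f\otimes S)\circ\coev_1\bigr)
\]
for \emph{all} $S\in\cC(c^{\vee_\pi,1}\to c^{\vee_\pi,1})$ and homogeneous $f\in\cC(c\to c)$, not merely $S=\id$. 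Your balancing~2 identity is the case $S=T$ on the left and $S=T^{-1}$ on the right; comparing with the bilinear identity at $S=T$ yields $T=T^{-1}$, and since $T>0$ this forces $T=\id$. So the crucial ingredient is the two-sided appearance of $T$ and $T^{-1}$ together with positivity, not faithfulness alone.
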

\begin{proof}
We adapt the proof from \cite[Lem.~3.9]{MR2091457} and \cite[Prop.~2.2.15]{MR3204665}.

\item[\underline{Step 1:}]
Suppose $c\in \cC$ is simple, and let $(c^\vee, \ev_c, \coev_c)$ be a dual of $c$.
Then \eqref{eq:PhiBalancing} is satisfied if and only if $\pi(\gr(c))(\ev_c\circ \ev_c^\dag) = \coev_c^\dag \circ \coev_c$.
Since $\pi(\gr(c))>0$, we can scale $\ev_c$ and $\coev_c$ by inverse scalars to achieve this.
Moreover, this choice of scalar is unique up to a phase.
Hence the choice of $\ev_c^\pi$ and $\coev_c^\pi$ satisfying \eqref{eq:PhiBalancing} is unique up to a unique phase in $U(1)$.

\item[\underline{Step 2:}]
Suppose $c\in \cC$ is an orthogonal direct sum of $n$ objects isomorphic to the simple object $a\in \cC$.
Let $(a^{\vee_\pi}, \ev_a^\pi, \coev_a^\pi)$ be the unique $\pi$-balanced dual of $a$ from Step 1.
Suppose $c^{\vee_\pi}\in \cC$ can be equipped with an evaluation and coevaluation which make it a dual for $c$.
Pick $n$ isometries $v_1, \dots, v_n \in \cC(a \to c)$ and $w_1,\dots, w_n \in \cC(a^{\vee_\pi} \to c^{\vee_\pi})$ with orthogonal ranges so that $\sum_{i=1}^n v_i\circ v_i^\dag = \id_c$ and $\sum_{i=1}^n w_i \circ w_i^\dag = \id_{c^{\vee_\pi}}$.
Define 
$$
\ev_c^\pi := \sum_{i=1}^n \ev_a^\pi \circ (w_i^\dag \otimes v_i^\dag)
\qquad
\text{and}
\qquad
\coev_c^\pi := \sum_{i=1}^n (v_i \otimes w_i) \circ \coev_a^\pi.
$$
It is clear that $\ev_c^\pi$ and $\coev_c^\pi$ satisfy the zig-zag axioms.
Moreover, for $v_k \circ v_\ell^\dag$, we calculate that
\begin{align}
\ev_c^\pi \circ (\id_{c^{\vee_\pi}} \otimes (v_k \circ v_\ell^\dag)) \circ (\ev_c^\pi)^\dag
&=
\sum_{i,j=1}^n \ev_a^\pi \circ (w_i^\dag \otimes v_i^\dag) \circ (\id_{c^{\vee_\pi}} \otimes (v_k \circ v_\ell^\dag)) \circ (w_j \otimes v_j)\circ (\ev_a^\pi)^\dag
\notag
\\&=
\sum_{i,j=1}^n \ev_a^\pi  \circ (\id_{c^{\vee_\pi}} \otimes (v_i^\dag\circ v_k \circ v_\ell^\dag\circ v_j)) \circ  (\ev_a^\pi)^\dag
\label{eq:PhiBalancedEvaluationFunctional}
\\&=
\delta_{k=\ell} (\ev_a^\pi \circ (\ev_a^\pi)^\dag)
\qquad\qquad\text{and similarly,}
\notag
\\
(\coev_c^\pi)^\dag \circ ((v_k \circ v_\ell^\dag)\otimes \id_{c^{\vee_\pi}}) \circ \coev_c^\pi
&=
\delta_{k=\ell} ((\coev_a^\pi)^\dag \circ \coev_a^\pi).
\label{eq:PhiBalancedCoevaluationFunctional}
\end{align}
Hence \eqref{eq:PhiBalancing} is satisfied since it was true for $a\in\cC$ by Step 1.

Now suppose in addition to $(c^{\vee_\pi}, \ev_c^\pi, \coev_c^\pi)$, we have a second dual $(c^\vee, \ev_c, \coev_c)$ which is $\pi$-balanced.
Pick $n$ isometries $x_1, \dots, x_n \in \cC(a \to c)$ and $y_1,\dots, y_n \in \cC(a^{\vee_\pi} \to c^\vee)$ with orthogonal ranges so that $\sum_{i=1}^n x_i\circ x_i^\dag = \id_c$ and $\sum_{i=1}^n y_i \circ y_i^\dag = \id_{c^\vee}$.
Then there is a unitary $u \in \cC(c\to c)$ such that $ux_i = v_i$ for $i=1,\dots, n$.
Define the scalars $u_{ij}:= x_i^\dag\circ u\circ x_j \in \cC(a\to a) \cong\bbC$ so that $(u_{ij})_{i,j=1}^n\in M_n(\bbC)$ is unitary.
Define $U = \sum_{i,j} u_{ij} (w_j \circ y_i^\dag) \in \cC(c^\vee\to c^{\vee_\pi})$ which is necessarily unitary by construction.
Then we have
$$
v_j = u\circ x_j =  \sum_i u_{ij} x_i
\qquad
\text{and}
\qquad
U\circ y_i = \sum_j u_{ij} w_j,
$$
which implies
$$
\sum_i x_i \otimes (U\circ y_i) 
= 
\sum_{i,j} 
u_{i,j} 
(x_i \circ w_j)
=
\sum_j
v_j \otimes w_j
\Longrightarrow
(\id_c \otimes U) \circ \coev_c =\coev_c^\pi.
$$
By Remark \ref{rem:CoevDeterminesEv}, we have that the unique isomorphism $c^\vee \to c^{\vee_\pi}$ is equal to $U$ and is necessarily unitary.

\item[\underline{Step 3:}]
Suppose $c\in \cC$ is arbitrary.
Decompose $c$ into an orthogonal direct sum of isotypic components and apply Step 2.
\end{proof}

\begin{lem}
\label{lem:TensorProductOfBalancedDuals}
Fix a groupoid homomorphism $\pi : \cU \to \bbR_{>0}$.
Let $(c^{\vee_\pi}, \ev_c^\pi, \coev_c^\pi)$ and $(d^{\vee_\pi}, \ev_d^\pi, \coev_d^\pi)$ be the $\pi$-balanced duals of $c$ and $d$ respectively.
For $c\otimes d\in \cC$, the dual 
$$
(
d^{\vee_\pi}\otimes c^{\vee_\pi}
, 
\ev^\pi_d \circ (\id_{d^{\vee_\pi}}\otimes \ev^\pi_c\otimes \id_d)
, 
(\id_c \otimes \coev^\pi_d\otimes \id_{c^{\vee_\pi}})\circ\coev^\pi_c
)
$$ 
is $\pi$-balanced.
\end{lem}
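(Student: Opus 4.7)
The plan has two parts: verify the zig-zag axioms for the proposed dual, then verify the $\pi$-balancing condition. The zig-zag axioms follow by a standard snake-diagram calculation from the zig-zag axioms for $(c^{\vee_\pi}, \ev_c^\pi, \coev_c^\pi)$ and $(d^{\vee_\pi}, \ev_d^\pi, \coev_d^\pi)$ and do not require balancedness; I would dispatch them briefly and focus the writeup on balancedness.

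The key tool is three iterated partial trace identities. Writing $\tr_L^c$ for the partial left trace over $c$, namely $\phi\mapsto (\ev_c^\pi\otimes \id)\circ(\id_{c^\vee}\otimes \phi)\circ((\ev_c^\pi)^\dag\otimes \id)$, and $\tr_R^d$ for the analogous partial right trace over $d$ using $\coev_d^\pi$, one checks directly from the definitions of the proposed $\ev_{c\otimes d}^\pi$ and $\coev_{c\otimes d}^\pi$ that
\begin{align*}
\tr_L^{c\otimes d}(f) &= \tr_L^{d}\bigl(\tr_L^{c}(f)\bigr), \\
\tr_R^{c\otimes d}(f) &= \tr_R^{c}\bigl(\tr_R^{d}(f)\bigr), \\
\tr_R^{d}\bigl(\tr_L^{c}(f)\bigr) &= \tr_L^{c}\bigl(\tr_R^{d}(f)\bigr),
\end{align*}
the last being a string-diagrammatic consequence of the fact that the cap/cup pair for $c$ and the cap/cup pair for $d$ act on disjoint strands.

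Now decompose $c = \bigoplus_u c_u$ and $d = \bigoplus_v d_v$ into $\cU$-homogeneous summands. Any matrix entry $\phi\colon c_u \otimes d_v \to c_{u'} \otimes d_{v'}$ of $f_g$ satisfies $uv = u'v' = g$. Since $\ev_c^\pi$ only pairs $c_u^\vee$ with $c_u$ (same $u$), $\tr_L^c$ annihilates the entries with $u\neq u'$; because $\cU$ is a groupoid, cancelling the common factor $u$ in $uv = u'v'$ then forces $v=v'$. Thus only the diagonal entries $\phi_{(u,v)}\colon c_u \otimes d_v \to c_u \otimes d_v$ with $uv = g$ contribute to either partial trace. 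For each such $\phi_{(u,v)}$, the element $h_{u,v}:=\tr_L^{c_u}(\phi_{(u,v)}) \in \cC(d_v \to d_v)$ is a pure $v$-graded morphism, so the $\pi$-balancing of $d^{\vee_\pi}$ applies directly; commuting the partial traces and then applying the $\pi$-balancing of $c^{\vee_\pi}$ to $\tr_R^{d_v}(\phi_{(u,v)})\in\cC(c_u \to c_u)$ (a pure $u$-graded morphism) gives
\[
\Psi\bigl(\tr_L^{d_v}\tr_L^{c_u}(\phi_{(u,v)})\bigr) = \pi(u)\pi(v)\,\Psi\bigl(\tr_R^{c_u}\tr_R^{d_v}(\phi_{(u,v)})\bigr) = \pi(g)\,\Psi\bigl(\tr_R^{c_u}\tr_R^{d_v}(\phi_{(u,v)})\bigr),
\]
using that $\pi$ is a groupoid homomorphism so $\pi(u)\pi(v) = \pi(uv) = \pi(g)$. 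Summing over all diagonal pairs $(u,v)$ with $uv = g$ and reassembling via the iterated trace identities yields $\Psi(\tr_L^{c\otimes d}(f_g)) = \pi(g)\,\Psi(\tr_R^{c\otimes d}(f_g))$, which is exactly the $\pi$-balancing identity.

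The main obstacle is not any single hard step but the careful bookkeeping of the $\cU$-grading under partial traces: one must verify that off-diagonal entries (in either $c$ or $d$) contribute nothing, invoke groupoid cancellation to align the surviving indices, and confirm that a partial trace of a homogeneous morphism is again homogeneous of the expected grade so that the two balancing identities apply termwise and combine to $\pi(uv)$ in exactly the right way.
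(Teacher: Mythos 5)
Your proof is correct and follows essentially the same approach as the paper's: introduce partial left and right traces over $c$ and $d$, use the iterated-trace and commutation identities to chain the two balancing conditions together, and finish with $\pi(u)\pi(v) = \pi(uv)$. The one substantive difference is that the paper proves the lemma only for homogeneous $c$ and $d$, explicitly delegating the reduction to the general case to the reader, whereas you carry out that reduction: you decompose $c$ and $d$ into $\cU$-homogeneous summands, observe that the partial trace over $c$ annihilates entries with mismatched $c$-grading, and invoke groupoid cancellation ($uv = u'v'$ and $u = u'$ forces $v = v'$) so that only diagonal blocks survive. This filling-in is correct and a genuine bonus, though it doesn't change the structure of the core computation.
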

\begin{proof}
We prove the lemma in the case $c,d$ are homogeneous following \cite[Lem.~3.11.i]{MR2091457}, and we leave the rest of the details to the reader.
In this case, for $f \in \cC(c\otimes d \to c\otimes d)$, $f$ is homogeneous, and we define
\begin{align*}
E_c^L(f) 
&:=
(\ev_c^\pi \otimes \id_d)\circ (\id_{c^{\vee_\pi}} \otimes f) \circ ((\ev_c^\pi)^\dag \otimes \id_d)
\in \cC(d\to d)
\\
E_d^R(f) 
&:=
( \id_c \otimes (\coev_d^\pi)^\dag)\circ (f\otimes \id_{d^{\vee_\pi}}) \circ ( \id_c \otimes \coev_d^\pi)
\in \cC(d\to d),
\end{align*}
which are also homogeneous morphisms.
We then calculate
\begin{align*}
&\Psi\left(
\left(
\ev^\pi_d \circ (\id_{d^{\vee_\pi}}\otimes \ev^\pi_c\otimes \id_d)
\right)
\circ
(\id_{d^{\vee_\pi} \otimes c^{\vee_\pi}} \otimes  f)
\circ
\left(
\ev^\pi_d \circ (\id_{d^{\vee_\pi}}\otimes \ev^\pi_c\otimes \id_d)
\right)^\dag
\right)
\\&=
\Psi\left(
\ev_d^\pi \circ (\id_{d^{\vee_\pi}} \otimes E^L_c(f)) \circ (\ev_d^\pi)^\dag
\right)
\\&=
\pi(\gr(d))
\cdot
\Psi\left(
(\coev_d^\pi)^\dag \circ  (E^L_c(f)\otimes \id_{d^{\vee_\pi}}) \circ \coev_d^\pi
\right)
\\&=
\pi(\gr(d))
\cdot
\Psi\left(
\ev_c^\pi \circ (\id_{c^{\vee_\pi}} \otimes E^R_d(f)) \circ (\ev_c^\pi)^\dag
\right)
\\&=
\pi(\gr(c))\pi(\gr(d))
\cdot
\Psi\left(
(\coev_c^\pi)^\dag \circ (\id_{c^{\vee_\pi}} \otimes E^R_d(f)) \circ \coev_c^\pi
\right)
\\&=
\pi(\gr(c\otimes d))
\cdot
\Psi\left(
\left(
(\id_c \otimes \coev^\pi_d\otimes \id_{c^{\vee_\pi}})\circ\coev^\pi_c
\right)^\dag
\circ
(f\otimes \id_{d^{\vee_\pi}\otimes c^{\vee_\pi}})
\left(
(\id_c \otimes \coev^\pi_d\otimes \id_{c^{\vee_\pi}})\circ\coev^\pi_c
\right)
\right).
\end{align*}
Thus the dual 
$(d^{\vee_\pi}\otimes c^{\vee_\pi}, \ev^\pi_d \circ (\id_{d^{\vee_\pi}}\otimes \ev^\pi_c\otimes \id_d), (\id_c \otimes \coev^\pi_d\otimes \id_{c^{\vee_\pi}})\circ\coev^\pi_c)$ 
is $\pi$-balanced.
\end{proof}

\begin{cor}
\label{cor:ExistsBalancedUnitaryDualFunctors}
For every $\pi \in \Hom(\cU \to \bbR_{>0})$, there exists a unique $\pi$-balanced unitary dual functor $\vee_\pi$ up to unique unitary monoidal natural isomorphism.
\end{cor}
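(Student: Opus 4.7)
The plan is to build $\vee_\pi$ from the objectwise $\pi$-balanced duals provided by Proposition \ref{prop:ExistsUniqueBalancedDual} and then verify that the assembly is both dagger-compatible and monoidal via uniqueness of those duals. First, for each $c \in \cC$ choose a $\pi$-balanced dual $(c^{\vee_\pi}, \ev^\pi_c, \coev^\pi_c)$, and define $\vee_\pi : \cC \to \cC^{\rm mop}$ on morphisms by the standard zig-zag formula. This automatically produces a dual functor together with its canonical tensorator $\nu^\pi_{a,b}$ from \eqref{eq:CanonicalTensorator}; what remains is to check that $\nu^\pi_{a,b}$ is unitary and that $\vee_\pi$ is a dagger functor.

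Unitarity of the tensorator is immediate from Lemma \ref{lem:TensorProductOfBalancedDuals}: the triple
\[
\bigl(b^{\vee_\pi}\otimes a^{\vee_\pi},\; \ev^\pi_b\circ(\id_{b^{\vee_\pi}}\otimes\ev^\pi_a\otimes\id_b),\; (\id_a\otimes\coev^\pi_b\otimes\id_{a^{\vee_\pi}})\circ\coev^\pi_a\bigr)
\]
is a $\pi$-balanced dual of $a\otimes b$, so by the uniqueness clause of Proposition \ref{prop:ExistsUniqueBalancedDual} its unique isomorphism to the chosen $\pi$-balanced dual $(a\otimes b)^{\vee_\pi}$ is unitary. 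Per Remark \ref{rem:CoevDeterminesEv}, this unique isomorphism is precisely $\nu^\pi_{a,b}$, so $\nu^\pi_{a,b}$ is unitary.

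For dagger compatibility $f^{\vee_\pi\dag} = f^{\dag\vee_\pi}$, I appeal to Fact \ref{fact:DagAndVeeCompatibility} and prove its condition (3). Given $f \in \cC(a\to b)$ and $g \in \cC(b\to a)$, pass to $c := a\oplus b$, whose $\pi$-balanced dual decomposes as $a^{\vee_\pi}\oplus b^{\vee_\pi}$ by Step 2 of Proposition \ref{prop:ExistsUniqueBalancedDual}, with evaluation and coevaluation expressed via the isometries $v_a,v_b$ into $c$ and $w_a,w_b$ into $c^{\vee_\pi}$. Applying the $\pi$-balancing identity \eqref{eq:PhiBalancing} to the endomorphism $h := v_a\circ g\circ v_b^\dag + v_b\circ f\circ v_a^\dag \in \cC(c\to c)$ (decomposed grade-by-grade), and then using the block-diagonalization formulas \eqref{eq:PhiBalancedEvaluationFunctional}--\eqref{eq:PhiBalancedCoevaluationFunctional} to reduce the $c$-side evaluation/coevaluation functionals back to those on $a$ and $b$, the cross-terms match up precisely to give $\ev^\pi_a\circ(\id_{a^{\vee_\pi}}\otimes(g\circ f))\circ(\ev^\pi_a)^\dag = \ev^\pi_b\circ(\id_{b^{\vee_\pi}}\otimes(f\circ g))\circ(\ev^\pi_b)^\dag$; faithfulness of $\Psi$ upgrades the scalar identity to the morphism identity in $\cC(1_\cC\to 1_\cC)$ summand-by-summand. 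This is condition (3) of Fact \ref{fact:DagAndVeeCompatibility}, so $\vee_\pi$ is dagger-compatible, and combined with unitarity of $\nu^\pi$, Proposition \ref{prop:VeeAndDagCompatibility} certifies $\vee_\pi$ as a unitary dual functor, manifestly $\pi$-balanced.

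For uniqueness, let $\vee'$ be any other $\pi$-balanced unitary dual functor. The uniqueness clause of Proposition \ref{prop:ExistsUniqueBalancedDual} produces for each $c$ a unique unitary $\zeta_c : c^{\vee'}\to c^{\vee_\pi}$, which coincides with the canonical intertwiner \eqref{eq:CanonicalIntertwiner}; naturality of $\zeta$ is then automatic from \eqref{eq:CanonicalIntertwiner}. Monoidality of $\zeta$ -- i.e.\ compatibility with the two canonical tensorators -- follows once more by uniqueness: both $\nu^\pi_{a,b}\circ(\zeta_a\otimes\zeta_b)$ and $\zeta_{a\otimes b}\circ\nu'_{a,b}$ are unitary isomorphisms between the two $\pi$-balanced duals of $a\otimes b$ (using Lemma \ref{lem:TensorProductOfBalancedDuals}), hence are equal. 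The main obstacle is the dagger-compatibility step: the $\pi$-balancing identity \eqref{eq:PhiBalancing} is only stated for endomorphisms, while $f^{\vee_\pi\dag}=f^{\dag\vee_\pi}$ involves morphisms between distinct objects, so the direct-sum trick (together with careful bookkeeping of the grade decomposition, since $\pi$ acts grade-by-grade) is essential to transfer the hypothesis from $\End(a\oplus b)$ to $\cC(a\to b)\times\cC(b\to a)$.
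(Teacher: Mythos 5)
Your overall architecture matches the paper's proof exactly: build $\vee_\pi$ objectwise from Proposition \ref{prop:ExistsUniqueBalancedDual}, get unitarity of $\nu^\pi$ from Lemma \ref{lem:TensorProductOfBalancedDuals} plus the uniqueness clause, and establish dagger-compatibility via Fact \ref{fact:DagAndVeeCompatibility}. The tensorator and uniqueness parts are correct and essentially reproduce the paper's argument, with a welcome spelling-out of the monoidality of $\zeta$.

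There is, however, a genuine error in the dagger-compatibility step. You propose to feed the cross-endomorphism $h := v_a\circ g\circ v_b^\dag + v_b\circ f\circ v_a^\dag$ into the $\pi$-balancing identity \eqref{eq:PhiBalancing}. But $h$ is purely off-diagonal with respect to the block decomposition of $c = a\oplus b$, so when you expand $\ev^\pi_c = \sum_i \ev^\pi_i\circ(w_i^\dag\otimes v_i^\dag)$ and $(\ev^\pi_c)^\dag = \sum_j (w_j\otimes v_j)(\ev^\pi_j)^\dag$, the $w$-isometries never cancel: $w_a^\dag w_b = 0$ forces $\ev^\pi_c\circ(\id\otimes h)\circ(\ev^\pi_c)^\dag = 0$, and symmetrically for the coevaluation side. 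So \eqref{eq:PhiBalancing} applied to $h$ reads $0 = \pi(\gamma)\cdot 0$ and conveys nothing. More fundamentally, \eqref{eq:PhiBalancing} compares the evaluation and coevaluation functionals against each other; it is not a traciality statement and cannot by itself yield condition (3) of Fact \ref{fact:DagAndVeeCompatibility}.

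What actually does the work is \eqref{eq:PhiBalancedEvaluationFunctional} alone (not the balancing condition): it shows that on an isotypic matrix block $x^{\oplus n}$ the functional $E_c(\cdot) := \ev^\pi_c\circ(\id\otimes\cdot)\circ(\ev^\pi_c)^\dag$ sends matrix units $v_k v_\ell^\dag$ to $\delta_{k\ell}$ times a fixed scalar, hence is proportional to the matrix trace and in particular tracial. You should then apply this traciality of $E_c$ not to the sum $h$, but to the individual off-diagonal pieces $u := v_a\circ g\circ v_b^\dag$ and $w := v_b\circ f\circ v_a^\dag$ separately: $uw = v_a(g\circ f)v_a^\dag$ and $wu = v_b(f\circ g)v_b^\dag$, so traciality gives $E_c(v_a(g\circ f)v_a^\dag) = E_c(v_b(f\circ g)v_b^\dag)$, and the block structure of $\ev^\pi_c$ reduces the left side to $E_a(g\circ f)$ and the right side to $E_b(f\circ g)$. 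This is exactly condition (3) of Fact \ref{fact:DagAndVeeCompatibility}, with the coevaluation functional handled identically via \eqref{eq:PhiBalancedCoevaluationFunctional}. So your direct-sum trick is the right idea and the endomorphism you wrote down is the right raw material — you just need to multiply the two cross-pieces rather than add them and feed the sum to \eqref{eq:PhiBalancing}.
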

\begin{proof}
By Proposition \ref{prop:ExistsUniqueBalancedDual}, for every $c\in \cC$, there exists a unique $\pi$-balanced dual $(c^{\vee_\pi}, \ev^\pi_c, \coev^\pi_c)$ up to unique unitary isomorphism.
By Lemma \ref{lem:TensorProductOfBalancedDuals} and Proposition \ref{prop:ExistsUniqueBalancedDual}, the canonical tensorator $\nu^\pi$ from \eqref{eq:CanonicalTensorator} is necessarily unitary.
It remains to prove $\vee_\pi$ is a dagger functor.
As in the proof of \cite[Lem.~3.9]{MR2091457}, 
By \eqref{eq:PhiBalancedEvaluationFunctional} and \eqref{eq:PhiBalancedCoevaluationFunctional}, equations \eqref{eq:EvaluationFunctional} and \eqref{eq:CoevaluationFunctional} hold, i.e., the positive linear maps 
\begin{align*}
f&\mapsto \ev^\pi_c \circ (\id_{c^{\vee_\pi}}\otimes f) \circ (\ev^\pi_c)^\dag \in \cC(1\to 1)
\\
f&\mapsto (\coev^\pi_c)^\dag \circ (f\otimes \id_{c^{\vee_\pi}}) \circ \coev^\pi_c \in \cC(1\to 1)
\end{align*}
are tracial (and faithful by Fact \ref{fact:Faithful}).
Hence by Fact \ref{fact:DagAndVeeCompatibility}, $\vee_\pi$ is a dagger functor.
\end{proof}

\begin{cor}
\label{cor:ClassificationOfUnitaryPivotalStructures}
The following are in canonical bijection:
\begin{enumerate}[(1)]
\item
Unitary dual functors up to unique unitary monoidal natural isomorphism
\item
$\Hom(\cU \to \bbR_{>0})$.
\end{enumerate}
\end{cor}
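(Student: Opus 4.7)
The plan is to route the bijection through the canonical pseudounitary pivotal structure and the ratio homomorphism, and then invoke Corollary \ref{cor:ExistsBalancedUnitaryDualFunctors} for the inverse.

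\textbf{Construction of the forward map (1)$\to$(2).} Given a unitary dual functor $\vee$, form the canonical unitary pivotal structure $\varphi$ via \eqref{eq:UnitaryPivotalStructure}, which is pseudounitary by Remark \ref{rem:UnitaryImpliesPseudounitary}. Apply Lemma \ref{lem:RatioHomormophism} to get $\pi_{\vee}\in \Hom(\cU\to \bbR_{>0})$ defined on simples by
$$
\pi_\vee(\gr(c)) := \frac{\dim^\varphi_L(c)}{\dim^\varphi_R(c)}.
$$
By Lemma \ref{lem:UnitarilyEquivalentUnitaryDuals}, unitarily equivalent unitary dual functors give rise to unitarily equivalent unitary pivotal structures, so by Lemma \ref{lem:EquivalentPivotalStructures} they have the same left and right dimensions of simples; hence $\pi_\vee$ depends only on the unitary equivalence class of $\vee$.

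\textbf{Construction of the inverse map (2)$\to$(1).} Given $\pi\in \Hom(\cU\to \bbR_{>0})$, assign the $\pi$-balanced unitary dual functor $\vee_\pi$ produced in Corollary \ref{cor:ExistsBalancedUnitaryDualFunctors}; this is well defined up to unique unitary monoidal natural isomorphism.

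\textbf{Verifying (2)$\to$(1)$\to$(2) is the identity.} For $\vee_\pi$ and any simple $c\in \cC$, Remark \ref{rem:UnitaryImpliesPseudounitary} identifies
$\dim^{\varphi^\pi}_L(c) = \Psi(\ev^\pi_c\circ (\ev^\pi_c)^\dag)$ and $\dim^{\varphi^\pi}_R(c) = \Psi((\coev^\pi_c)^\dag\circ \coev^\pi_c)$ (after applying $\Psi$ to the $\cC(1\to 1)$-valued dimensions, which for simple $c$ are scalar multiples of a minimal projection). Plugging $f=\id_c$ into the $\pi$-balancing condition \eqref{eq:PhiBalancing} then yields $\dim^{\varphi^\pi}_L(c)= \pi(\gr(c))\dim^{\varphi^\pi}_R(c)$, so $\pi_{\vee_\pi} = \pi$.

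\textbf{Verifying (1)$\to$(2)$\to$(1) is the identity.} Starting from a unitary dual functor $\vee$, let $\pi = \pi_\vee$ be the associated ratio homomorphism. I claim $\vee$ is $\pi$-balanced, after which Corollary \ref{cor:ExistsBalancedUnitaryDualFunctors} forces $\vee$ to be unitarily monoidally naturally isomorphic to $\vee_\pi$ (uniquely so). By semisimplicity and linearity, to verify \eqref{eq:PhiBalancing} it suffices to check it for each simple $c$ with $f=\id_c$ (general homogeneous $c_g\subset c$ decomposes orthogonally into simples, and the zig-zag calculations \eqref{eq:PhiBalancedEvaluationFunctional}--\eqref{eq:PhiBalancedCoevaluationFunctional} reduce the balancing on each isotypic summand to the simple case). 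For simple $c$ and $f=\id_c$, both sides of \eqref{eq:PhiBalancing} become scalars, and the identity reduces by Remark \ref{rem:UnitaryImpliesPseudounitary} to $\dim^\varphi_L(c) = \pi(\gr(c))\cdot \dim^\varphi_R(c)$, which is the definition of $\pi=\pi_\vee$.

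\textbf{Main obstacle.} The main subtlety is the reduction of the $\pi$-balancing \eqref{eq:PhiBalancing} for an arbitrary $\vee$ to the simple case. The grading decomposes $c=\bigoplus_g c_g$ and $c^\vee = \bigoplus_g (c_g)^\vee$ compatibly with $\ev_c$ and $\coev_c$ (cross-terms vanish because the Hom spaces between objects of different grading in $\cC(-\to 1_\cC)$ are zero); after this decomposition, the balancing for $f_g$ reduces to a sum over simple summands of $c_g$, and within each summand the two sides of \eqref{eq:PhiBalancing} match by the very definition of $\pi_\vee$. Modulo this bookkeeping, the bijection follows directly from uniqueness in Corollary \ref{cor:ExistsBalancedUnitaryDualFunctors}.
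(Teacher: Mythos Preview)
Your argument is correct and very close to the paper's. Both constructions of the forward map and of the inverse are identical to what the paper does. The only difference is in how the bijection is \emph{closed}: the paper argues injectivity of $\vee\mapsto\pi_\vee$ directly from Lemmas~\ref{lem:EquivalentPivotalStructures} and~\ref{lem:UnitarilyEquivalentUnitaryDuals} and gets surjectivity from Corollary~\ref{cor:ExistsBalancedUnitaryDualFunctors}, whereas you show both composites are the identity. Your route has the advantage that the step ``$\vee$ is itself $\pi_\vee$-balanced'' is explicit and avoids the implicit fact (needed in the paper's injectivity step) that $\dim_L^\varphi(c)\dim_R^\varphi(c)$ is independent of the unitary dual functor, so that equal ratios force equal dimensions.

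One small citation fix: in your reduction of \eqref{eq:PhiBalancing} for an arbitrary unitary $\vee$ to simples, you invoke \eqref{eq:PhiBalancedEvaluationFunctional}--\eqref{eq:PhiBalancedCoevaluationFunctional}, but those equations are computed for the \emph{particular} $\pi$-balanced dual built in Proposition~\ref{prop:ExistsUniqueBalancedDual}, not for a general $\vee$. What you actually need is that $f\mapsto \ev_c\circ(\id\otimes f)\circ\ev_c^\dag$ and $f\mapsto \coev_c^\dag\circ(f\otimes\id)\circ\coev_c$ are tracial for any unitary dual functor; this is exactly Fact~\ref{fact:DagAndVeeCompatibility} (equations \eqref{eq:EvaluationFunctional}--\eqref{eq:CoevaluationFunctional}), which applies since $f^{\vee\dag}=f^{\dag\vee}$. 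With that correction, your reduction to $f=\id_c$ for $c$ simple goes through verbatim.
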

\begin{proof}
Suppose $\vee$ is a unitary dual functor, and let $\varphi$ be the canonical associated unitary pivotal structure, which is pseudounitary by Remark \ref{rem:UnitaryImpliesPseudounitary}.
Thus \eqref{eq:RatioHomomorphism} from Lemma \ref{lem:RatioHomormophism} gives us a function $\vee\mapsto \pi_{\vee}$ from unitary equivalence classes of unitary dual functors to $\Hom(\cU \to \bbR_{>0})$, which is injective by Lemmas \ref{lem:EquivalentPivotalStructures} and \ref{lem:UnitarilyEquivalentUnitaryDuals}.
Surjectivity now follows immediately from Corollary \ref{cor:ExistsBalancedUnitaryDualFunctors}, since it is easy to calculate that $\pi_{\vee_\pi} = \pi$ by \eqref{eq:PhiBalancing}.
\end{proof}

\begin{rem}
\label{rem:OtherGradingsInduceUnitaryDualFunctors}
Suppose $\cC$ is faithfully graded by the groupoid $\cG$.
Then for any $\pi \in \Hom(\cG \to \bbR_{>0})$, we get a unique lift $\widetilde{\pi} \in \Hom(\cU \to \bbR_{>0})$ using the canonical canonical groupoid surjection $\cU \twoheadrightarrow \cG$ from Remark \ref{rem:CanonicalGroupoidSurjection}.
Then the unique $\widetilde{\pi}$-balanced unitary dual functor is \emph{$\pi$-balanced}:
for all simple $c\in \cC$ with $\gr(c) = g\in \cG$ and all $f\in \cC(c\to c)$, 
$$
\Psi(
\ev^{\widetilde{\pi}}_c \circ (\id_{c^\vee}\otimes f) \circ (\ev^{\widetilde{\pi}}_c)^\dag
)
=
\pi(g)
\cdot
\Psi(
(\coev^{\widetilde{\pi}}_c)^\dag \circ (f\otimes \id_{c^\vee}) \circ \coev^{\widetilde{\pi}}_c
).
$$
\end{rem}

Choosing $\cG$ to be trivial or $\pi \in \Hom(\cU \to \bbR_{>0})$ trivial yields the following corollary.

\begin{cor}[{\cite[Thm.~4.7]{MR2091457} and \cite[\S4]{MR3342166}}]
\label{cor:CanonicalSphericalStructure}
A unitary multitensor category has a unique unitary spherical structure corresponding to $\pi = 1$ such that for all $f\in \cC(c\to c)$ and $g\in\cU$,
\begin{equation}
\label{eq:SphericalTrace}
\Psi\left(
\coev_c^\dag \circ (f_g\otimes \id_{c^\vee}) \circ \coev_c
\right)
= 
\Psi\left(
\ev_c \circ (\id_{c^\vee}\otimes f_g) \circ \ev_c^\dag
\right).
\end{equation}
\end{cor}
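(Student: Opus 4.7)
The strategy is to invoke the heavy lifting already done in Corollary \ref{cor:ExistsBalancedUnitaryDualFunctors} and Corollary \ref{cor:ClassificationOfUnitaryPivotalStructures} by specializing to the trivial homomorphism $\pi \equiv 1 \in \Hom(\cU \to \bbR_{>0})$, and then to verify that the resulting unitary pivotal structure really is spherical in the multitensor sense. Concretely, I would apply Corollary \ref{cor:ExistsBalancedUnitaryDualFunctors} with $\pi\equiv 1$ to produce a unitary dual functor $\vee_1$, unique up to unique unitary monoidal natural isomorphism, together with its canonical unitary pivotal structure $\varphi$ from \eqref{eq:UnitaryPivotalStructure}. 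By inspection of Definition \ref{def:PhiBalancedSolutions}, the $\pi$-balancing identity \eqref{eq:PhiBalancing} with $\pi\equiv 1$ is exactly the claimed equation \eqref{eq:SphericalTrace}, so the identification of $\vee_1$ with the candidate spherical structure is essentially tautological.

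The genuine check is that this $\varphi$ satisfies $\Tr_L^\varphi(f) = \Tr_R^\varphi(f)$ as $M_r(\bbC)$-valued traces for every $c \in \cC$ and every $f \in \cC(c\to c)$. Using the alternative formulas from Remark \ref{rem:UnitaryImpliesPseudounitary}, the $1$-balancing identity reads $\Psi(\tr_L^\varphi(h_g)) = \Psi(\tr_R^\varphi(h_g))$ for every homogeneous component $h_g$; summing over $g \in \cU$ yields $\Psi(\tr_L^\varphi(h)) = \Psi(\tr_R^\varphi(h))$ for all $h \in \cC(c \to c)$. To extract individual matrix entries, I would apply this with $h := p_i \otimes f \otimes p_j$. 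By \eqref{eq:SphericalDefinition} the morphism $\tr_L^\varphi(p_i\otimes f\otimes p_j) \in \cC(1_j \to 1_j)$ is the scalar $(\Tr_L^\varphi(f))_{i,j}$ times $\id_{1_j}$, and similarly $\tr_R^\varphi(p_i\otimes f\otimes p_j) = (\Tr_R^\varphi(f))_{i,j}\id_{1_i}$. Since $\Psi$ sends each minimal projection $\id_{1_k}$ of $\cC(1_\cC\to 1_\cC)$ to $1$, applying $\Psi$ to both sides isolates the scalars, giving $(\Tr_L^\varphi(f))_{i,j} = (\Tr_R^\varphi(f))_{i,j}$ for all $i,j$, i.e., sphericality.

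For uniqueness, suppose $\vee'$ is any unitary dual functor inducing a spherical unitary pivotal structure $\varphi'$. By Lemma \ref{lem:RatioHomormophism} the ratio $\pi_{\vee'}(\gr(c)) := \dim_L^{\varphi'}(c)/\dim_R^{\varphi'}(c)$ defines a homomorphism $\cU\to\bbR_{>0}$, and sphericality forces $\dim_L^{\varphi'}(c) = \dim_R^{\varphi'}(c)$ on every simple, hence $\pi_{\vee'}\equiv 1$. Corollary \ref{cor:ClassificationOfUnitaryPivotalStructures} then pins down the unitary equivalence class of $\vee'$ uniquely, and it coincides with that of $\vee_1$. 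The only point requiring genuine thought in the whole argument is the reconciliation between the $\Psi$-scalar formulation of balancing, which is what Definition \ref{def:PhiBalancedSolutions} provides, and the $M_r(\bbC)$-valued sphericality that is standard for multitensor categories; the auxiliary morphisms $p_i \otimes f \otimes p_j$ and the functional $\Psi$ do exactly the bookkeeping needed to bridge the two.
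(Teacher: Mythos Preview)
Your proposal is correct and follows the paper's approach: the paper derives the corollary in a single line by specializing Remark~\ref{rem:OtherGradingsInduceUnitaryDualFunctors} and Corollary~\ref{cor:ExistsBalancedUnitaryDualFunctors} to $\pi=1$ (or $\cG$ trivial), leaving the rest to the cited references. Your additional verification---that the $\Psi$-balancing condition with $\pi\equiv 1$ genuinely yields $\Tr_L^\varphi=\Tr_R^\varphi$ entrywise via $h=p_i\otimes f\otimes p_j$, and the uniqueness argument via Lemma~\ref{lem:RatioHomormophism} and Corollary~\ref{cor:ClassificationOfUnitaryPivotalStructures}---correctly fills in details the paper leaves implicit, but does not constitute a different route.
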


\begin{rem}
\label{rem:PerturbUnitaryDualFunctor}
Starting with a balanced unitary dual functor $\vee$, we can \emph{rescale} $\vee$ by a $\pi \in \Hom(\cU \to \bbR_{>0})$ to obtain a $\pi$-balanced unitary dual functor $\vee_\pi : \cC \to \cC^{\text{mop}}$ as follows.
For a homogeneous $c\in \cC$ with $\gr(c)=g\in \cU$, we define
\begin{equation}
\label{eq:RenormalizeCupsAndCaps}
\ev_c^\pi := \pi(g)^{1/4} \ev_c
\qquad
\qquad
\coev_c^\pi := \pi(g)^{-1/4} \coev_c.
\end{equation}
It is immediate that these renormalized maps satisfy the zig-zag axioms, and moreover, we see $(c^\vee, \ev_c^\pi , \coev_c^\pi)$ is $\pi$-balanced:
\begin{align*}
\ev^\pi_c
\circ
(\id_{c^\vee} \otimes f)
\circ 
(\ev^\pi_c)^\dag
&=
\pi(g)^{1/2}\cdot
\left(
\ev_c
\circ
(\id_{c^\vee} \otimes f)
\circ 
\ev_c^\dag
\right)
\\&=
\pi(g)^{1/2}
\cdot
\left(
\coev^\dag_c
\circ
(f\otimes \id_{c^\vee})
\circ 
\coev_c
\right)
\\&=
\pi(g)
\cdot
\left(
(\coev^\pi_c)^\dag
\circ
(f\otimes \id_{c^\vee})
\circ 
\coev_c^\pi
\right).
\end{align*}
However, notice that we have left $\nu$ and $\varphi$ \emph{unchanged}!
Indeed, if $c,d$ are homogeneous with $\gr(c)=g$ and $\gr(d)=h$, then $c\otimes d$ is homogeneous with $\gr(c\otimes d) = gh$, and
\begin{align*}
\nu^\pi_{c,d} 
&=
(\ev_d^\pi\otimes \id_{(c\otimes d)^\vee})
\circ 
(\id_{d^\vee}\otimes \ev^\pi_c \otimes \id_d \otimes \id_{(c\otimes d)^\vee})
\circ
(\id_{d^\vee\otimes c^\vee} \otimes \coev^\pi_{c\otimes d})
\\&=
\pi(g)^{1/4}\pi(h)^{1/4}\pi(gh)^{-1/4}
\cdot
\left(
(\ev_d\otimes \id_{(c\otimes d)^\vee})
\circ 
(\id_{d^\vee}\otimes \ev_c \otimes \id_d \otimes \id_{(c\otimes d)^\vee})
\circ
(\id_{d^\vee\otimes c^\vee} \otimes \coev_{c\otimes d})
\right)
\\&=
\nu_{c,d}.
\end{align*}
Similarly, $\gr(c^\vee) = g^{-1}$, and
$$
\varphi^\pi_c
=
((\coev^\pi_c)^\dag \otimes \id_{c^{\vee\vee}})
\circ
(\id_c \otimes \coev^\pi_{c^\vee})
=
\pi(g)^{-1/4} \pi(g^{-1})^{-1/4}\cdot
\left(
(\coev^\dag_c \otimes \id_{c^{\vee\vee}})
\circ
(\id_c \otimes \coev_{c^\vee})
\right)
=
\varphi_c.
$$
Since $\nu,\varphi$ are completely determined on homogeneous objects by naturality, we see that $\nu^\pi = \nu$ and $\varphi^\pi = \varphi$.

Conversely, starting with $\vee_\pi$ a $\pi$-balanced unitary dual functor, we can obtain a balanced unitary dual functor by rescaling evaluations and coevaluations on homogeneous objects $c\in \cC$ with $\gr(c) = g\in \cU$ by
\begin{equation}
\label{eq:SphericalizerCupsAndCaps}
\ev_c := \pi(g)^{-1/4} \ev^\pi_c
\qquad
\qquad
\coev_c := \pi(g)^{1/4} \coev^\pi_c.
\end{equation}
As before, $\nu$ and $\varphi$ remained unchanged by this scaling.
\end{rem}

\subsection{Bi-involutive structures}
\label{sec:Bi-involutive}

The notion of unitary dual functor is stronger than the similar notion of \emph{bi-involutive structure} from \cite[\S2.1]{MR3663592}.

\begin{defn}
An \emph{involutive structure} \cite{MR2861112} on a multitensor category $\cC$ consists of a conjugate-linear tensor functor $(\overline{\,\cdot\,},\nu) : \cC \to \cC^{\text{mp}}$\,\footnote{
Using the notation of \cite{1312.7188}, $\cC^{\text{mp}}$ denotes the tensor category obtained from $\cC$ by reversing the order of tensor product.
In other words, $(\overline{\,\cdot\,}, \nu): \cC \to \cC$ is conjugage-linear and anti-monoidal.
}
together with a monoidal natural isomorphism $\varphi: \id_\cC \to \overline{\overline{\,\cdot\,}}$\,\footnote{Monoidality is similar to \eqref{eq:EquivalentPivotalStructures}.}.
When $\cC$ is unitary, we call $(\overline{\,\cdot\,}, \nu,\varphi)$ a \emph{bi-involutive structure} \cite{MR3663592} if $\overline{\,\cdot\,}$ is a dagger functor and $\nu,\varphi$ are unitary.
\end{defn}

\begin{ex}
Complex conjugation gives a bi-involutive structure on the tensor $\Cstar$ category $\Hilb$ of separable Hilbert spaces.
\end{ex}

\begin{ex}
Similar to the previous example, complex conjugation gives a bi-involutive structure on $\Bim(M)$, the tensor $\Cstar$ category of separable $M-M$ bimodules where $M$ is any von Neumann algebra with the Connes fusion tensor product.
\end{ex}

By Proposition \ref{prop:VeeAndDagCompatibility} and Corollary \ref{cor:UnitaryPivotal}, every unitary dual functor $\vee_\pi: \cC \to \cC^{\text{mop}}$ on a unitary multitensor category $\cC$ gives a bi-involutive structure $(\overline{\,\cdot\,}^\pi, \nu^\pi, \varphi^\pi)$
as follows.
On objects, we define $\overline{c}^\pi := c^{\vee_\pi}$,
 and on morphisms $f\in \cC(a\to b)$, we define
$$
\overline{f}^{\pi}
:=
\begin{tikzpicture}[baseline=-.1cm]
	\draw (0,-1) -- (0,1);
	\draw (-.9,-1) -- (-.9,1.5);
	\draw (.9,1) -- (.9,-1.5);
	\roundNbox{fill=white}{(0,0)}{.3}{0}{0}{$f^\dag$}
	\roundNbox{fill=white}{(0,-1)}{.3}{.9}{0}{$(\ev_b^\pi)^\dag$}
	\roundNbox{fill=white}{(0,1)}{.3}{0}{.9}{$(\coev_a^\pi)^\dag$}
	\node at (1.1,-1) {\scriptsize{$\overline{b}^\pi$}};
	\node at (-1.1,1) {\scriptsize{$\overline{a}^\pi$}};
	\node at (-.2,-.5) {\scriptsize{$b$}};
	\node at (-.2,.5) {\scriptsize{$a$}};
\end{tikzpicture}
=
f^{\vee_\pi \dag}
=
f^{\dag \vee_\pi}.
$$
We take the canonical 
unitary monoidal natural isomorphisms $\nu^\pi = \{\nu^\pi_{a,b} : \overline{a}^\pi \otimes \overline{b}^\pi \to \overline{b\otimes a}^\pi\}$ and $\varphi^\pi: \id \Rightarrow \overline{\overline{\,\cdot\,}}^\pi$ induced by $\vee_\pi$.

\begin{rem}
Notice that the data of a bi-involutive structure $(\overline{\,\cdot\,}^\pi, \nu^\pi, \varphi^\pi)$ is weaker than that of the dual functor $\vee_\pi$ as we have \emph{forgotten} the  evaluation and coevaluation morphisms $\ev^\pi_c, \coev^\pi_c$ for $c\in \cC$.
Thus we \emph{cannot} recover $\vee_\pi$ from $(\overline{\,\cdot\,}^\pi, \nu^\pi, \varphi^\pi)$.
\end{rem}

In order to discuss unitary equivalence of bi-involutive structures, we first define the notion of a bi-involutive tensor functor from  \cite{MR3663592}.

\begin{defn}
\label{defn:Bi-involutiveTensorFunctor}
A \emph{bi-involutive tensor functor} between bi-involutive tensor $\Cstar$ categories 
$$
(F, \mu,\chi):
(\cC,\overline{\,\cdot\,}^\cC,\nu^\cC, \varphi^\cC) \to (\cD,\overline{\,\cdot\,}^\cD, \nu^\cD, \varphi^\cD)
$$ 
is a dagger tensor functor $(F, \mu)$ (our convention is $\mu_{a,b} : F(a)\otimes F(b) \to F(a\otimes b)$) equipped with a unitary natural isomorphism $\chi_c : F(\overline{c}) \to \overline{F(c)}$ which is monoidal with respect to $\mu,\nu^\cC, \nu^\cD$ and involutive with respect to $\varphi^\cC, \varphi^\cD$.
That is, the following diagrams commute:
$$
\begin{tikzcd}
F(\overline{a})\otimes F(\overline{b})
\ar[d, "\chi_a\otimes\chi_b"]
\ar[rr, "\mu_{\overline{a}, \overline{b}}"]
&&
F(\overline{a}\otimes \overline{b})
\ar[rr, "F(\nu^\cC_{a,b})"]
& &
F(\overline{b\otimes a})
\ar[d, "\chi_{b\otimes a}"]
\\
\overline{F(a)} \otimes \overline{F(b)}
\ar[rr, "\nu^\cD_{F(a),F(b)}"]
&&
\overline{F(b)\otimes F(a)}
\ar[rr, "\overline{\mu_{b,a}}"]
&&
\overline{F(b\otimes a)}
\end{tikzcd}
\qquad
\qquad
\begin{tikzcd}
F(a)
\ar[rr, "F(\varphi^\cC_a)"] \ar[d, "\varphi^\cD_{F(a)}"]  
&& 
F(\overline{\overline{a}}) 
\ar[d, "\chi_{\overline{a}}"]
\\
\overline{\overline{F(a)}}
&& 
\overline{F(\overline{a})}
\ar[ll, "\overline{\chi_{a}}"]
\end{tikzcd}
$$
\end{defn}

\begin{defn}
Two bi-involutive structures 
$(\overline{\,\cdot\,}^i, \nu^i, \varphi^i)$ on $\cC$ for $i=1,2$  
are \emph{unitarily equivalent} if there is an anti-monoidal involutive unitary natural isomorphism $\chi : \overline{\,\cdot\,}^1 \Rightarrow\overline{\,\cdot\,}^2$.
This means that $\chi$ satisfies the commutative diagrams in Definition \ref{defn:Bi-involutiveTensorFunctor} substituting $\cD$ with $\cC$ and $F: \cC\to \cD$ with $\id_\cC: \cC \to \cC$.
\end{defn}

\begin{rem}
\label{rem:AbsenceOfCanonicalUnitaryEquivalenceForBiInvolutive}
Given a bi-involutive structure $(\overline{\,\cdot\,}, \nu,\varphi)$ on a unitary multitensor category $\cC$, an autoequivalence $\chi \in \Aut((\overline{\,\cdot\,}, \nu,\varphi))$
consists of a unitary 
$\chi_c \in \cC(\overline{c} \to \overline{c})$  
for all $c\in \cC$
such that for all $a,b\in \cC$,
$\chi_{b\otimes a} \circ \nu_{a,b} = \nu_{a,b} \circ (\chi_a\otimes \chi_b)$
and
$\overline{\chi_a} = \chi_{\overline{a}}^{-1}$.
Similar to Remark \ref{rem:ClassifyingPivotalStructures} and Lemma \ref{lem:MonoidalNaturalAutomorphisms}, looking at simple objects, we get a canonical isomorphism between $\Aut((\overline{\,\cdot\,}, \nu,\varphi))$
and $\Hom(\cU^{\text{op}} \to U(1))$, but notice $g\mapsto g^{-1}$ gives an isomorphism $\cU \cong \cU^{\text{op}}$.
This means for any two bi-involutive structures $(\overline{\,\cdot\,}^i, \nu^i, \varphi^i)$ on $\cC$ for $i=1,2$, 
$\Hom((\overline{\,\cdot\,}^1,\nu^1, \varphi^1) \to (\overline{\,\cdot\,}^2,\nu^2,\varphi^2))$ is either empty or a torsor for $\Hom(\cU^{\text{op}} \to U(1))$.
Hence there is not a \emph{unique} unitary equivalence between two unitarily equivalent bi-involutive structures.
However, we will see in the proof of Corollary \ref{cor:UniqueBi-involutive} below that 
given two unitary dual functors, there is a \emph{canonical} unitary equivalence between their induced bi-involutive structures.
\end{rem}

\begin{ex}
\label{ex:SphericalAndTracialPivotalStructures}
When $(N,\tr)$ is a ${\rm II}_1$ factor with its canonical trace, there are two distinguished unitary dual functors that are often used in applications.
One is the balanced dual functor giving the canonical spherical structure corresponding to the trivial homomorphism $\pi=1$.
The other is obtained from the grading on $\bfBim(N)$ given by taking the ratio of the left to right von Neumann dimension.
When $N=R$, the hyperfinite ${\rm II}_1$ factor, this grading is faithful, since the fundamental group of $R$ is $\bbR_{>0}$ \cite{MR0009096}.
Taking the group homomorphism $\id: \bbR_{>0} \to \bbR_{>0}$ as in Remark \ref{rem:OtherGradingsInduceUnitaryDualFunctors} gives the \emph{tracial} unitary dual functor.
Calculating the universal grading group of $\bfBim(R)$ remains an important open question.
Interestingly, both the spherical and tracial unitary dual functors induce unitarily equivalent bi-involutive structures as was noted in \cite[Rem.~2.14]{1704.02035}.
\end{ex}

Motivated by Example \ref{ex:SphericalAndTracialPivotalStructures}, we now prove Corollary \ref{cor:UniqueBi-involutive}, which states that any two bi-involutive structures induced from unitary dual functors are canonically unitarily equivalent.
Notice that by Remark \ref{rem:AbsenceOfCanonicalUnitaryEquivalenceForBiInvolutive}, such a unitary equivalence is not unique.

\begin{proof}[Proof of Corollary \ref{cor:UniqueBi-involutive}]
Suppose $\vee_1$ and $\vee_2$ are two unitary dual functors on $\cC$, and let $\pi_1$ and $\pi_2$ be the corresponding homomorphisms in $\Hom(\cU \to \bbR_{>0})$.
While the unique monoidal natural isomorphism $\zeta: \vee_2 \Rightarrow \vee_1$ from \eqref{eq:CanonicalIntertwiner} is \emph{not} unitary,
it can be \emph{rescaled} as in Remark \ref{rem:PerturbUnitaryDualFunctor} to obtain a canonical unitary equivalence between the bi-involutive structures induced by $\vee_1$ and $\vee_2$.
Indeed, we define for a simple $c\in \cC$ with $\gr(c) = g$,
$$
\chi_c
:=
\left(
\frac{\pi_2(g)}{\pi_1(g)}
\right)^{1/4}
\begin{tikzpicture}[baseline=-.1cm]
	\draw[thick, red] (-.4,-.4) -- (-.4,0) arc (180:0:.2cm);
	\draw[thick] (.4,.4) -- (.4,0) arc (0:-180:.2cm);
	\node at (-.7,-.2) {\scriptsize{$c^{\vee_2}$}};
	\node at (.7,.2) {\scriptsize{$c^{\vee_1}$}};
\end{tikzpicture}
=
\left(
\frac{\pi_2(g)}{\pi_1(g)}
\right)^{1/4}
\zeta_c.
$$
The above formula is derived as follows.
First, we rescale $\vee_1$ and $\vee_2$ to get balanced dual functors $\vee_1^b$ and $\vee_2^b$ as in \eqref{eq:SphericalizerCupsAndCaps}.
By Corollary \ref{cor:ExistsBalancedUnitaryDualFunctors}, the unique monoidal natural isomorphism from \eqref{eq:CanonicalIntertwiner} $\zeta^b : \vee_2^b \Rightarrow \vee_1^b$ is necessarily unitary.
Notice now that $\chi_c = \zeta^b_c$ as morphisms from $c^{\vee_2} = c^{\vee_2^b}$ to $c^{\vee_1} = c^{\vee_1^b}$, as the rescaling procedure fixes the dual objects.
Hence $\chi$ is unitary.
It is now straightforward to verify that for all $a,b\in \cC$,
$\chi_{b\otimes a} \circ \nu_{a,b} = \nu_{a,b} \circ (\chi_a\otimes \chi_b)$
and
$\overline{\chi_a} = \chi_{\overline{a}}^{-1}$.
\end{proof}

\begin{rem}
Note that the canonical unitary equivalence $\chi$ from the proof of Corollary \ref{cor:UniqueBi-involutive} is the unique unitary natural isomorphism which can be obtained from the unique monoidal natural isomorphism $\zeta: \vee_1 \Rightarrow \vee_2$ by scaling the $\zeta_c$ for simple $c\in \cC$ by strictly positive real numbers.
Hence if we have three unitary dual functors $\vee_i$ for $i=1,2,3$ which induce bi-involutive structures $(\overline{\,\cdot\,}^i, \nu^i, \varphi^i)$ for $i=1,2,3$, then the canonical unitary equivalence $\chi^{12}$ composed with the canonical unitary equivalence $\chi^{23}$ is equal to the canonical unitary equivalence $\chi^{13}$.
\end{rem}

We finish this section by providing some important results on bi-involutive tensor functors.

\begin{prop}
\label{prop:PivotalFunctorBi-involutive}
Suppose $(F,\mu): (\cC, \vee) \to (\cD, \vee)$ is a dagger tensor functor between unitary multitensor categories equipped with unitary dual functors, and let $\varphi^\cC$ and $\varphi^\cD$ be the induced unitary pivotal structures.
The following are equivalent.
\begin{enumerate}[(1)]
\item
$(F,\mu)$ is pivotal with respect $\varphi^\cC$ and $\varphi^\cD$.
\item
The canonical isomorphism $\delta_c$ from \eqref{eq:CanonicalDualIso} is unitary for all $c\in \cC$.
\end{enumerate}
\end{prop}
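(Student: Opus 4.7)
The plan is to show that the pivotal condition \eqref{eqref:PivotalFunctor} reduces, after algebraic manipulation, to the equation characterizing unitarity of $\delta_c$, yielding both implications at once. The key ingredients are: the explicit formula \eqref{eq:UnitaryPivotalStructure} for $\varphi^\cC$ and $\varphi^\cD$ as the induced unitary pivotal structures; the defining formula \eqref{eq:CanonicalDualIso} for $\delta_c$; the hypothesis that $(F,\mu)$ is dagger tensor, so $\mu$ is unitary and $F(f)^\dag = F(f^\dag)$; and the identity $f^{\vee\dag} = f^{\dag\vee}$ in $\cD$ (Fact~\ref{fact:DagAndVeeCompatibility}), which will let me rewrite $\delta_c^\vee$ in terms of $\delta_c^\dag$ rather than $\delta_c$.

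Concretely, I would expand both sides of \eqref{eqref:PivotalFunctor}. On the left, $\delta_c^\vee\circ\varphi^\cD_{F(c)}$ can be massaged, using $f^{\vee\dag} = f^{\dag\vee}$ in $\cD$ together with the alternate form of $\varphi^\cD_{F(c)}$ from \eqref{eq:UnitaryPivotalStructure}, into an expression involving only $\delta_c^\dag$ and coevaluations in $\cD$. On the right, $\delta_{c^\vee}\circ F(\varphi^\cC_c)$ expands by using $F(\coev_c^\dag) = F(\coev_c)^\dag$ and by dragging $F$ through each tensor product at the cost of $\mu$'s and $\mu^{-1}$'s; the leftover $\mu$'s combine with $\delta_{c^\vee}$ via the defining property of $\delta_{c^\vee}$ from Remark~\ref{rem:CoevDeterminesEv}. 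After applying the zig-zag axioms in $\cD$, the pivotal equation is seen to be equivalent to the identity
\[
F(\coev_c) \;=\; \mu_{F(c), F(c^\vee)} \circ (\id_{F(c)} \otimes \delta_c^\dag) \circ \coev_{F(c)}.
\]

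To close the argument, I would invoke Remark~\ref{rem:CoevDeterminesEv} applied to the two duals $(F(c^\vee),\, F(\ev_c)\circ\mu_{F(c^\vee),F(c)},\, \mu_{F(c),F(c^\vee)}^{-1}\circ F(\coev_c))$ and $(F(c)^\vee,\, \ev_{F(c)},\, \coev_{F(c)})$ of $F(c)\in\cD$, whose unique intertwiner $F(c)^\vee \to F(c^\vee)$ is $\delta_c^{-1}$ and is characterized by the same identity with $\delta_c^\dag$ replaced by $\delta_c^{-1}$. Thus the pivotal condition holds if and only if $\delta_c^\dag = \delta_c^{-1}$, i.e., $\delta_c$ is unitary, proving both (1)$\Rightarrow$(2) and (2)$\Rightarrow$(1) simultaneously. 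The main obstacle I anticipate is bookkeeping of the tensorators $\mu$ and the associators during the diagrammatic reduction; the cleanest approach is to use string diagrams and insert $\mu$'s only where needed to pass $F$ through a tensor product, treating $\cD$ as strict.
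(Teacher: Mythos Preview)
Your approach is essentially the same as the paper's. The paper reduces the pivotal equation \eqref{eqref:PivotalFunctor} to the intermediate identity $F(\coev_c^\dag)\circ\mu_{c,c^\vee} = \coev_{F(c)}^\dag\circ(\id_{F(c)}\otimes\delta_c)$ (the dagger of yours), then bends this to recognize the right-hand side as the formula for $(\delta_c^{-1})^\dag$, concluding that pivotal holds iff $\delta_c = (\delta_c^{-1})^\dag$; your invocation of Remark~\ref{rem:CoevDeterminesEv} to identify the unique intertwiner as $\delta_c^{-1}$ and compare it with $\delta_c^\dag$ is exactly this recognition step. One minor point: the identity $f^{\vee\dag}=f^{\dag\vee}$ you need is the definition of unitary dual functor (Proposition~\ref{prop:VeeAndDagCompatibility}(1)), not Fact~\ref{fact:DagAndVeeCompatibility}.
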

\begin{proof}
For notational simplicity, we simply denote evaluations and coevaluations in this proof by $\ev$ and $\coev$.
Recall from \eqref{eqref:PivotalFunctor} that $(F,\mu)$ is pivotal if and only if for all $c\in \cC$, $\delta_c^\vee\circ \varphi^\cD_{F(c)} = \delta_{c^\vee}\circ F(\varphi^\cC_c)$.
This equality holds if and only if
$$
F(\coev_c^\dag) \circ \mu_{c, c^\vee} = \coev_{F(c)}^\dag \circ (\id_{F(c)} \otimes \delta_c)
$$
if and only if
$$
\delta_c 
= 
(\id_{F(c)^\vee}\otimes F(\coev_c^\dag))
\circ
(\id_{F(c)^\vee}\otimes \mu_{c, c^\vee})
\circ
(\ev_{F(c)}^\dag \otimes \id_{F(c^\vee)})
$$
if and only if $\delta_c=(\delta_c^{-1})^\dag$ is unitary.
\end{proof}

\begin{cor}
\label{cor:PivotalImpliesBi-involutive}
If either of the equivalent conditions in Proposition \ref{prop:PivotalFunctorBi-involutive} hold, then $(F,\mu,\delta)$ is bi-involutive.
\end{cor}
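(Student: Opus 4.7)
The plan is to verify that $\chi := \delta$ makes $(F, \mu, \delta)$ satisfy Definition \ref{defn:Bi-involutiveTensorFunctor}. By hypothesis, $(F, \mu)$ is already a dagger tensor functor, and condition (2) of Proposition \ref{prop:PivotalFunctorBi-involutive} states that each $\delta_c$ is unitary. So only two statements remain: $\delta$ is compatible with the tensorators (the left square), and $\delta$ is compatible with the involutors (the right square).

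For the involutivity square, I would first translate from the bi-involutive language back to the dual-functor language. Recall $\overline{f} = f^{\vee\dag} = f^{\dag \vee}$, and since $\delta_a$ is unitary, $\overline{\delta_a} = \delta_a^{\vee\dag} = (\delta_a^{\dag})^\vee = (\delta_a^{-1})^\vee = (\delta_a^\vee)^{-1}$. So the involutivity diagram $\overline{\delta_a} \circ \delta_{\overline{a}} \circ F(\varphi^\cC_a) = \varphi^\cD_{F(a)}$ rewrites as
\[
\delta_{a^\vee} \circ F(\varphi^\cC_a) = \delta_a^\vee \circ \varphi^\cD_{F(a)},
\]
which is exactly equation \eqref{eqref:PivotalFunctor} from the definition of a pivotal functor. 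This is condition (1) of Proposition \ref{prop:PivotalFunctorBi-involutive}, so the involutivity square commutes for free.

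For the monoidality square, I expect to rely on the uniqueness principle recorded in Remark \ref{rem:CoevDeterminesEv}: both canonical tensorators $\nu^\cC$ and $\nu^\cD$ are themselves the unique isomorphisms between two choices of dual object. The strategy is to observe that both the top-and-right composite $\chi_{b\otimes a}\circ F(\nu^\cC_{a,b}) \circ \mu_{\overline{a},\overline{b}}$ and the down-and-bottom composite $\overline{\mu_{b,a}} \circ \nu^\cD_{F(a),F(b)} \circ (\chi_a\otimes\chi_b)$ are morphisms from $F(\overline{a}) \otimes F(\overline{b})$ to $\overline{F(b\otimes a)} = F(b\otimes a)^\vee$ which, after composing appropriately with $\coev_{F(b\otimes a)}$, realize the \emph{same} duality data on $F(b\otimes a)$. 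By the uniqueness statement in Remark \ref{rem:CoevDeterminesEv}, they must agree. Concretely, the verification amounts to substituting the defining formulas \eqref{eq:CanonicalDualIso} for each $\delta$ and \eqref{eq:CanonicalTensorator} for each $\nu$, then applying the tensor functor axioms for $\mu$ and the zig-zag identities to cancel. No further use of unitarity is needed for this step.

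The main obstacle is the bookkeeping in the monoidality verification: there are four instances of $\delta$, two instances of $\nu$, and several instances of $\mu$, with evaluation and coevaluation morphisms threaded between them, and one must keep track of which side of a cup/cap each $\mu$ appears on and on which factor each $\ev$ or $\coev$ acts. The conceptual content, however, is simply that canonical duals are canonically isomorphic and that $\mu$ intertwines them, so the calculation is forced. Once the involutivity square is dispatched by \eqref{eqref:PivotalFunctor} and the monoidality square is dispatched by uniqueness of duals, the conclusion that $(F, \mu, \delta)$ is bi-involutive is immediate.
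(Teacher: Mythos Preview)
Your proposal is correct and matches the paper's approach: the involutivity square is dispatched exactly as you say, by observing that $\overline{\delta_a} = (\delta_a^\vee)^{-1}$ when $\delta_a$ is unitary so that the square becomes the pivotality condition \eqref{eqref:PivotalFunctor}, and the monoidality square is handled by a direct unpacking of the definitions of $\delta$ and $\nu$ together with the tensor functor axioms and zig-zag identities. The only cosmetic difference is that the paper records an explicit common formula for both composites $F(a^\vee)\otimes F(b^\vee)\to F(b\otimes a)^\vee$ rather than phrasing it via uniqueness of duals, but this is the same computation organized slightly differently.
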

\begin{proof}
We must verify the diagrams in Definition \ref{defn:Bi-involutiveTensorFunctor} commute for $(F,\mu,\delta)$.
For the first diagram, one shows both composites from $F(a^\vee) \otimes F(b^\vee) \to F(b\otimes a)^\vee$ are equal to
\begin{align*}
([F(\ev_a) \circ \mu_{a^\vee, a}] \otimes \id_{F(b\otimes a)^\vee})
&\circ
(\id_{F(a^\vee)} \otimes [F(\ev_b) \circ \mu_{b,b^\vee}] \otimes \id_{F(a)} \otimes \id_{F(b\otimes a)^\vee})
\\&\circ
(\id_{F(a^\vee)\otimes F(b^\vee)} \otimes \mu_{b,a}^{-1} \otimes \id_{F(b\otimes a)^\vee})
\circ
(\id_{F(a^\vee)\otimes F(b^\vee)} \otimes \coev_{F(b\otimes a)}).
\end{align*}
We leave the details to the reader.
For the second diagram, just notice that this is exactly the pivotality condition \eqref{eqref:PivotalFunctor} when $\delta$ is unitary, as $\overline{\delta_c} = (\delta_c^\vee)^{-1}$.
\end{proof}

The following remark is based on a suggestion of Marcel Bischoff.

\begin{rem}
\label{rem:SimultaneouslyScalePivotalStructures}
Suppose that $\cC,\cD$ are unitary multitensor categories which are both faithfully graded by the groupoid $\cG$.
Suppose that we have unitary dual functors on $\cC$ and $\cD$, and let $(\overline{\,\cdot\,}^\cC,\nu^\cC, \varphi^\cC)$ and $(\overline{\,\cdot\,}^\cD, \nu^\cD, \varphi^\cD)$ be the induced bi-involutive structures.
Suppose that $(F, \mu): \cC \to \cD$ is a dagger tensor functor such that the canonical isomorphism $\delta_c$ from \eqref{eq:CanonicalDualIso} is unitary for all $c\in \cC$, so that $(F, \mu,\delta)$ is bi-involutive by Corollary \ref{cor:PivotalImpliesBi-involutive}.

Suppose now that $(F,\mu,\delta)$ preserves the grading groupoid $\cG$, i.e., $\gr(F(c)) = \gr(c)$ for all homogeneous $c\in \cC$.
Picking an arbitrary $\pi \in \Hom(\cG \to \bbR_{>0})$, we can renormalize the cups and caps by $\pi$ as in \eqref{eq:RenormalizeCupsAndCaps} from Remark \ref{rem:PerturbUnitaryDualFunctor} to get new unitary dual functors on $\cC$ and $\cD$ respectively.
Note that these new dual functors need not be $\pi$-balanced unless the unitary dual functors we started with were the canonical spherical ones.
However, for lack of better notation, we will denote the new evaluations and coevaluations by $\ev^\pi$ and $\coev^\pi$ respectively.

Notice that rescaling as in \eqref{eq:RenormalizeCupsAndCaps} leaves $\delta$ \emph{unchanged}!
Indeed, denoting the new canonical monoidal natural isomorphism by $\delta^\pi$ (again due to lack of better notation), for a homogeneous $c\in \cC$ with $\gr(c) = \gr(F(c))=g\in \cG$, we have
\begin{align*}
\delta^\pi_c
&=
([F(\ev^\pi_c) \circ \mu_{c^\vee,c}] \otimes \id_{F(c)^\vee})
\circ
(\id_{F(c)} \otimes \coev^\pi_{F(c)})
\\&=
\pi(g)^{1/4}
\pi(g)^{-1/4}
\cdot
\left(
([F(\ev_c) \circ \mu_{c^\vee,c}] \otimes \id_{F(c)^\vee})
\circ
(\id_{F(c)} \otimes \coev_{F(c)})
\right)
=
\delta_c.
\end{align*}
Since the bi-involutive structures of $\cC$ and $\cD$ did not change by Corollary \ref{cor:UniqueBi-involutive}, 
we conclude from Proposition \ref{prop:PivotalFunctorBi-involutive} that $(F,\mu)$ is pivotal with respect to the new unitary pivotal structures, as the pivotal functor condition \eqref{eqref:PivotalFunctor} still holds.
\end{rem}
 
\section{Planar algebras and projection categories}

Planar algebras come in many variants; among them are
\emph{unoriented} \cite[Def.~1.1.1]{JonesPANotes} (see also \cite{MR2559686}),
\emph{oriented} \cite[Def.~1.1.5]{JonesPANotes}  
\emph{disoriented} \cite{MR2496052}, and
\emph{shaded}  \cite[Def.~1.1.4]{JonesPANotes} (see also \cite{MR2679382,MR2972458}).

Shaded planar algebras were first defined in \cite{math.QA/9909027} to axiomatize the standard invariant of a finite index subfactor.
Since, they have been a valuable tool in the construction \cite{MR2979509,MR3314808} and classification \cite{MR3166042,1509.00038} of subfactor planar algebras as they give a generators and relations approach to subfactor theory.

The following folklore theorem is known to experts \cite{MR2559686,MR2811311,1207.1923,MR3405915,1607.06041}.
\begin{thm}
\label{thm:PAEquivalence}
There is an equivalence of categories\,\footnote{
\label{footnote:Truncate2Category}
Pairs $(\mathcal{C},X)$ form a $2$-category which is equivalent to a $1$-category in the following sense.
Between any two 1-morphisms, there is at most one 2-morphism, which is necessarily invertible when it exists.
We refer the reader to \cite[Lem.~3.5]{1607.06041} and the paragraph thereafter for more details on this subtelty.}
\[
\left\{
\text{\rm Oriented planar algebras}
\right\}
\longleftrightarrow
\left\{
\text{\rm Pairs $(\cC, X)$ with $\cC$ a pivotal category and $X\in\cC$ a generator}
\right\}
\]
Here, we call $X\in\cC$ a generator if every object of $\cC$ is isomorphic to a direct summand of a direct sum of tensor powers of $X$ and $X^\vee$.
\end{thm}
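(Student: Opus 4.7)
The strategy is to exhibit the equivalence as a pair of mutually (essentially) inverse constructions, one sending a pair $(\cC, X)$ to an oriented planar algebra $\PA_\bullet(\cC, X)$, and one sending an oriented planar algebra $\cP_\bullet$ to a pivotal category with generator.

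First I would construct $\PA_\bullet(\cC, X)$. For a word $w = (\epsilon_1, \ldots, \epsilon_n)$ in $\{+,-\}$, set $X^w := X^{\epsilon_1} \otimes \cdots \otimes X^{\epsilon_n}$ where $X^+ := X$ and $X^- := X^\vee$, and define the $w$-box space to be $\PA_w := \cC(1_\cC \to X^w)$. To interpret an oriented planar tangle $T$ with input disks labeled by words $w_1, \ldots, w_k$ and output word $w_0$ as a multilinear map $\PA_{w_1} \otimes \cdots \otimes \PA_{w_k} \to \PA_{w_0}$, I would use the standard graphical calculus: isotope $T$ into a Morse position, read it off as a composition of identities, evaluations $\ev_X, \ev_{X^\vee}$, coevaluations $\coev_X, \coev_{X^\vee}$, and the pivotal isomorphism $\varphi_X$, and insert the input morphisms at their disks. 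The main content here is that the resulting morphism depends only on the planar isotopy class of $T$, which reduces to the coherence theorem for pivotal categories (the zig-zag axioms plus monoidality of $\varphi$), a fact well-known from Joyal--Street / Selinger.

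Conversely, from $\cP_\bullet$ I would build a category $\cC_0(\cP)$ whose objects are words in $\{+,-\}$, with $\cC_0(\cP)(v\to w) := \PA_{\bar v \cdot w}$ (where $\bar v$ reverses and flips signs of $v$), composition via the obvious vertical-stacking tangle, tensor product by horizontal concatenation of words and of diagrams, duals via flipping signs, and evaluations/coevaluations given by the cup and cap tangles. The pivotal structure $\varphi_w$ is built from the appropriate twist tangle, and the zig-zag and pivotality relations are exactly planar isotopies. Then set $\cC(\cP) := \Kar(\cC_0(\cP)^{\oplus})$, the Karoubi envelope of the additive envelope, which is again pivotal because pivotality is preserved under both operations, and take $X := (+)$ as the chosen generator.

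Next I would check these constructions are mutually (pseudo-)inverse. In one direction, $\cC(\PA_\bullet(\cC, X))$ is equivalent to $\cC$ via the canonical functor sending a word $w$ to $X^w$; essential surjectivity is exactly the generation hypothesis on $X$ (after passing to direct sums and summands), and fully faithfulness is the tautology $\cC(X^v \to X^w) = \cC(1 \to X^{\bar v \cdot w})$ given by bending strands using the duality data. In the other direction, a planar algebra $\cP_\bullet$ is recovered from $\cC(\cP)$ by $\cC(\cP)(1 \to X^w) = \PA_w$ on the nose.

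The main obstacles I expect are two. The more serious one is the 2-categorical subtlety flagged in the footnote: morphisms $(\cC, X) \to (\cD, Y)$ are pairs $(F, \alpha)$ of a pivotal functor with $\alpha : F(X) \cong Y$, and one must check that between any two such there is at most one monoidal natural isomorphism, which is necessarily pivotal --- this is where uniqueness (from Remark \ref{rem:CoevDeterminesEv}-type rigidity and the fact that $X$ generates) collapses the 2-category to a 1-category, following \cite[Lem.~3.5]{1607.06041}. The second, more routine, obstacle is a careful verification that the planar operations really are well-defined, i.e., that two Morse decompositions of the same tangle give the same morphism; here I would invoke the standard coherence theorem for pivotal categories rather than reproving it.
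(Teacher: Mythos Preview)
Your proposal is correct and follows the standard approach. Note, however, that the paper does not actually prove this theorem: it is stated as a folklore result with citations to \cite{MR2559686,MR2811311,1207.1923,MR3405915,1607.06041}, and the paper only spells out the two constructions (projection category from planar algebra, and Definition~\ref{defn:PAFromCategory} going the other way) in the 2-shaded $\Cstar$ case without verifying mutual inverseness. Your sketch is thus more detailed than anything in the paper itself, and it matches both the paper's partial description and the treatment in the cited references, including the handling of the 2-categorical collapse via \cite[Lem.~3.5]{1607.06041}.
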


Theorem \ref{thm:PAEquivalence} holds for many sub-classes of planar algebras and pivotal categories.
We provide a helpful dictionary below:
\begin{center}
\begin{tabular}{c|c}
Planar algebras
&
Pivotal categories with generators
\\\hline
unoriented
&
symmetrically self-dual generator
\\
connected ($\dim(\cP_0)=1$)
&
$1_\cC$ simple
\\
2-shaded
&
partition $1_\cC=1_+ \oplus 1_-$ 
with generator $X = 1_+\otimes X \otimes 1_-$ 
\\
2-shaded connected
&
in addition to line above, $1_+, 1_-$ are simple
\\
semisimple 
& 
semisimple
\\
finite depth
&
finitely semisimple
\\
spherical
&
spherical
\\
$\Cstar$
&
$\Cstar$ with unitary pivotal structure
\end{tabular}
\end{center}
For example, we get an equivalence of categories between
finite depth subfactor (2-shaded connected spherical $\Cstar$) planar algebras 
and
pairs $(\cC,X)$ where
$\cC$ is a 
finitely semisimple unitary multifusion category with its canonical spherical structure (see Corollary \ref{cor:CanonicalSphericalStructure}) such that $1_\cC$ decomposes into simples as $1_\cC = 1_+\oplus 1_-$
and $X = 1_+\otimes X \otimes 1_-$ generates $\cC$.

\subsection{Correspondence between planar algebras and projection categories}

In all of the above cases, one can recover the original pivotal category from the planar algebra as the \emph{idempotent category}, or in the $\rm C^*$ cases as the \emph{projection category}. 
We only spell this out here in the case of 2-shaded planar $\rm C^*$ algebras.

\begin{defn}
The projection category of a shaded $\Cstar$ planar algebra $\cP_\bullet$
is the unitary multitensor category with unitary pivotal structure defined as follows:
\begin{itemize}
\item
The objects are the projections $p\in \cP_{n,\pm}$ ($p=p^\dag=p^2$) and the tensor product is horizontal juxtaposition (which is zero if the two shadings do not agree).
\item
The morphisms spaces are given for $p \in \cP_{n,\pm}$ and $q\in \cP_{n',\pm'}$ by  
$$
\Hom(p\to  q) = 
\delta_{\pm' = \pm}
\delta_{n' \equiv n \operatorname{mod} 2}
\left\{
x\in \cP_{(n+n')/2, \pm}
\,\,
\middle|
\,\,
x
=
\begin{tikzpicture}[baseline=-.1cm]
	\draw (0,-1.2) -- (0,1.2);
	\roundNbox{fill=white}{(0,0)}{.2}{0}{0}{$x$}
	\roundNbox{fill=white}{(0,.7)}{.2}{0}{0}{$q$}
	\roundNbox{fill=white}{(0,-.7)}{.2}{0}{0}{$p$}
	\node at (.2,.35) {\scriptsize{$n'$}};
	\node at (.2,1.05) {\scriptsize{$n'$}};
	\node at (.2,-.35) {\scriptsize{$n$}};
	\node at (.2,-1.05) {\scriptsize{$n$}};
	\node at (-.4,.7) {\scriptsize{$\star$}};
	\node at (-.4,0) {\scriptsize{$\star$}};
	\node at (-.4,-.7) {\scriptsize{$\star$}};
\end{tikzpicture}
\right\}
.
$$
\item
The adjoint is the dagger structure of the planar algebra; notice that if $x\in \Hom(p\to q)$, then $x^\dag\in \Hom(q\to p)$.
\item
We get a homomorphism $\pi : \cU \to \bbR_{>0}$ by taking the ratio of left to right traces as in \eqref{eq:RatioHomomorphism} from Lemma \ref{lem:RatioHomormophism}.
The $\pi$-balanced dual of $p\in \cP_{n,\pm}$ is given by the the $180^{\circ}$-rotation of $p$ in $\cP_{n,\mp}$, with evaluation and co-evaluation given by using cups and caps:
$$
\coev_p :=
\begin{tikzpicture}[baseline=-.1cm]
	\draw (-.3,.5) -- (-.3,-.2) arc (-180:0:.3cm) -- (.3,.5);
	\roundNbox{fill=white}{(.3,0)}{.2}{0}{0}{\rotatebox{180}{$p$}}
	\roundNbox{fill=white}{(-.3,0)}{.2}{0}{0}{$p$}
	\node at (-.7,0) {\scriptsize{$\star$}};
	\node at (.7,0) {\scriptsize{$\star$}};
	\node at (-.5,.35) {\scriptsize{$n$}};
	\node at (.5,.35) {\scriptsize{$n$}};
	\node at (-.5,-.35) {\scriptsize{$n$}};
	\node at (.5,-.35) {\scriptsize{$n$}};
\end{tikzpicture}
\qquad
\qquad
\ev_p :=
\begin{tikzpicture}[baseline=-.1cm]
	\draw (-.5,-.5) -- (-.5,.2) arc (180:0:.5cm) -- (.5,-.5);
	\roundNbox{fill=white}{(-.5,0)}{.2}{0}{0}{\rotatebox{180}{$p$}}
	\roundNbox{fill=white}{(.5,0)}{.2}{0}{0}{$p$}
	\node at (-.1,0) {\scriptsize{$\star$}};
	\node at (.1,0) {\scriptsize{$\star$}};
	\node at (-.7,.35) {\scriptsize{$n$}};
	\node at (.7,.35) {\scriptsize{$n$}};
	\node at (-.7,-.35) {\scriptsize{$n$}};
	\node at (.7,-.35) {\scriptsize{$n$}};
\end{tikzpicture}
.
$$
It is straightforward to calculate from the formula for the unitary pivotal structure in (2) of Proposition \ref{prop:VeeAndDagCompatibility} that $\varphi^\pi_p = \id_p$.
\end{itemize}
The generator corresponds to the unshaded-shaded strand 
$
\,\,\,
\begin{tikzpicture}[baseline=-.1cm]
	\fill[shaded] (0,-.2) rectangle (.2,.2); 
	\draw (0,-.2) -- (0,.2);
\end{tikzpicture} 
\in 
\cP_{1,+}
$.
\end{defn}

\begin{defn}
\label{defn:PAFromCategory}
Conversely, given a unitary multitensor category $\cC$ where $1_\cC = 1_+\oplus 1_-$ is an orthogonal decomposition (with $1_\pm$ not necessarily simple) together with a fixed $\pi \in \Hom( \cU \to \bbR_{>0})$ and generator $X = 1_+ \otimes X \otimes 1_-$, 
we obtain a shaded $\textrm{C}^*$ planar algebra $\cP_\bullet$ by a unitary version of \cite[\S4]{MR2811311}.
Let $(X^{\vee_\pi}, \ev_X^\pi, \coev_X^\pi)$ be the $\pi$-balanced dual of $X$ as in Proposition \ref{prop:ExistsUniqueBalancedDual}.
We use the notation
$$
X^{{\rm alt}\otimes n} := \underbrace{X \otimes X^{\vee_\pi}\otimes \cdots \otimes X^?}_{n\text{ tensorands}}
\qquad\qquad
(\overline{X}^\pi)^{{\rm alt}\otimes n} := \underbrace{X^{\vee_\pi} \otimes X\otimes \cdots \otimes (X^{\vee_\pi})^?}_{n\text{ tensorands}}
$$
where $X^? = X$ if $n$ is odd and $X^{\vee_\pi}$ if $X$ is even, and $(X^{\vee_\pi})^? = X$ if $n$ is even and $X^{\vee_\pi}$ if $n$ is odd.
The box spaces are defined for $n\geq 0$ by
\begin{equation}
\label{eq:BoxSpacesFromCategory}
\cP_{n,+}:=
\cC(X^{{\rm alt}\otimes n} \to  X^{{\rm alt}\otimes n})
\qquad
\qquad
\cP_{n,-}:=
\cC((X^{\vee_\pi})^{{\rm alt}\otimes n} \to (X^{\vee_\pi})^{{\rm alt}\otimes n}).
\end{equation}
The action of tangles is via the graphical calculus for pivotal categories; details appear in \cite[\S4]{MR2811311}.
For our purposes, we specify the actions of the following shaded planar tangles, which determines the action of every shaded planar tangle.
\begin{itemize}
\item
The strand is the identity morphsim
$
\begin{tikzpicture}[baseline = -.1cm]
	\fill[shaded] (0,-.3) -- (0,.3) -- (.2,.3) -- (.2,-.3);
	\draw (0,-.3) -- (0,.3);
\end{tikzpicture}
:=\id_X
$
and
$
\begin{tikzpicture}[baseline = -.1cm]
	\fill[shaded] (0,-.3) -- (0,.3) -- (-.2,.3) -- (-.2,-.3);
	\draw (0,-.3) -- (0,.3);
\end{tikzpicture}
:=\id_{X^{\vee_\pi}}
$
\item
Caps and cups which are shaded above are given by
$
\begin{tikzpicture}[baseline = 0cm]
	\fill[shaded] (-.1,0) -- (-.1,.3) -- (.5,.3) -- (.5,0) -- (.4,0) arc (0:180:.2cm);
	\draw[] (0,0) arc (180:0:.2cm);
\end{tikzpicture}
:=\ev_{X}^\pi
$
and
$
\begin{tikzpicture}[baseline = -.2cm, yscale = -1]
	\filldraw[shaded] (0,0) arc (180:0:.2cm);
\end{tikzpicture}
:=\coev_{X}^\pi
$
\item
Caps and cups which are shaded below are given by
$
\begin{tikzpicture}[baseline = 0cm]
	\filldraw[shaded] (0,0) arc (180:0:.2cm);
\end{tikzpicture}
:=(\coev_{X}^\pi)^\dag
$
and
$
\begin{tikzpicture}[baseline = -.2cm, yscale=-1]
	\fill[shaded] (-.1,0) -- (-.1,.3) -- (.5,.3) -- (.5,0) -- (.4,0) arc (0:180:.2cm);
	\draw[] (0,0) arc (180:0:.2cm);
\end{tikzpicture}
:=
(\ev_{X}^\pi)^\dag
$
\item
Vertical join is composition in $\cC$
$
\begin{tikzpicture}[baseline = -.15cm]
	\draw (0,-.55) -- (0,.65);
	\filldraw[thick, fill=white] (-.2,-.4) rectangle (.2,0);
	\filldraw[thick, fill=white] (-.2,.1) rectangle (.2,.5);
	\node at (0,-.2) {$f$};
	\node at (0,.3) {$g$};
\end{tikzpicture}
:=g\circ f
$
\item
Horizontal join is tensor product in $\cC$
$
\begin{tikzpicture}[baseline = -.1cm]
	\draw (0,-.35) -- (0,.35);
	\draw (.5,-.35) -- (.5,.35);
	\filldraw[thick, fill=white] (-.2,-.2) rectangle (.2,.2);
	\filldraw[thick, fill=white] (.3,-.2) rectangle (.7,.2);
	\node at (0,0) {$f$};
	\node at (.5,0) {$g$};
\end{tikzpicture}
:=f\otimes g.
$
\end{itemize}
The $\dag$-structure on $\cP_\bullet$ is just $\dag$ on morphisms in $\cC$.
\end{defn}

\subsection{Making closed loops scalar valued}
\label{sec:ScalarLoops}

For this section, we assume $\cC$ is a unitary multitensor category such that $\dim(\cC(1_\cC \to 1_\cC)) = r$ and that $\cC$ is faithfully graded by the groupoid $\cM_r$ consisting of the groupoid with $r$ objects and exactly one isomorphism between any two objects, which we can identify with the standard system of matrix units  $\{E_{ij}\}$ for $M_r(\bbC)$.

While $\cC$ has a canonical spherical structure from Corollary \ref{cor:CanonicalSphericalStructure}, it is not always the most relevant one for planar algebraic purposes, including graph planar algebra embedding.
Example \ref{ex:GPA} below discusses the graph planar algebra in detail.
Given an object $X\in \cC$ which partitions $1_\cC$ into a source and target summand, one may desire that closed loops count for scalars in the planar algebra associated to $(\cC, \vee, X)$ for some unitary dual functor $\vee$.
The conditions on $\vee$ are exactly provided by the recent article \cite{1805.09234}, which introduced the notion of standard solutions for the conjugate equations for a unitary multitensor category \emph{with respect to an object} $X \in \cC$ in the more general context of $\Cstar$ 2-categories.
We now rephrase their definition in our setup.

\begin{defn}[{\cite[Def.~7.25 and 7.29]{1805.09234}}]
\label{defn:StandardSolutionsWithRespectToX}
Suppose $X\in \cC$ such that there is an orthogonal decomposition $1_\cC = 1_+ \oplus 1_-$, which are not necessarily simple, such that $X = 1_+ \otimes X \otimes 1_-$.
Let $V_\pm$ be a set of representatives of the simple summands of $1_\pm$.

Let $D_X \in M_{V_+\times V_-}(\bbC)$ be the matrix whose $uv$-th entry is equal to the positive spherical dimension $\dim(u \otimes X \otimes v)$ using the canonical spherical structure from Corollary \ref{cor:CanonicalSphericalStructure}.
Let $d_X>0$ such that $d_X^2$ is the common Frobenius-Perron eigenvalue of $D_XD_X^T$ and $D_X^TD_X$.
Let $\lambda_+$ and $\lambda_-$ be the Frobenius-Perron eigenvectors for $D_XD_X^T$ and $D_X^T D_X$ which are normalized so that
$$
\sum_{u\in V_+} \lambda_+(u)^2
= 
1 
=
\sum_{v\in V_-} \lambda_-(v)^2.
$$
We define 
\begin{equation}
\label{eq:PerronFrobeniusEigenvector}
\lambda 
:=
\begin{pmatrix}
\lambda_+
\\
\lambda_-
\end{pmatrix}
\in \bbC^r.
\end{equation}
The unitary dual functor corresponding to the groupoid homomorphism $\pi_X: \cM_r \to \bbR_{>0}$ given by
\begin{equation}
\label{eq:StandardGroupoidHomFromFPdim}
\pi(E_{uv}) :=\left(\frac{\lambda(u)}{\lambda(v)}\right)^2.
\end{equation}
is called \emph{standard with respect to $X$}.
The unitary dual functor corresponding to the groupoid homomorphism $\pi_X^\ell: \cM_r \to \bbR_{>0}$ given by
\begin{equation}
\label{eq:LopsidedGroupoidHomFromFPdim}
\pi_X^\ell(E_{uv}) 
:=
\begin{cases}
d_X^{-1}
\left(
\frac{\lambda(u)}{\lambda(v)}
\right)^{2}
&\text{if $u\in V_+$ and $v\in V_-$}
\\
d_X
\left(
\frac{\lambda(u)}{\lambda(v)}
\right)^{2}
&\text{if $u\in V_-$ and $v\in V_+$}
\\
\left(
\frac{\lambda(u)}{\lambda(v)}
\right)^{2}
&\text{else.}
\end{cases}
\end{equation}
is called \emph{lopsided with respect to $X$}.
\end{defn}

The following lemma is immediate from \cite{1805.09234}.

\begin{lem}
\mbox{}
\begin{enumerate}[(1)]
\item
The 2-shaded $\Cstar$ planar algebra $\cP_\bullet$ corresponding to the triple $(\cC, X, \vee_{\pi_X})$
with 
$\id_X = 
\begin{tikzpicture}[baseline=-.1cm]
	\fill[shaded] (0,-.2) rectangle (.2,.2); 
	\draw (0,-.2) -- (0,.2);
\end{tikzpicture} 
\in 
\cP_{1,+}$
satisfies
\begin{equation}
\label{eq:StandardPA}
\begin{tikzpicture}[baseline=-.1cm]
	\draw[shaded] (0,0) circle (.3cm);
\end{tikzpicture}
=
d_X
\id_{1_+}
\qquad\qquad
\begin{tikzpicture}[baseline=-.1cm]
	\fill[shaded, rounded corners=5] (-.6,-.6) rectangle (.6,.6);
	\draw[fill=white] (0,0) circle (.3cm);
\end{tikzpicture}
=
d_X 
\id_{1_-}.
\end{equation}
\item
The 2-shaded $\Cstar$ planar algebra $\cP_\bullet^\ell$ corresponding to the triple $(\cC, X, \vee_{\pi_X^\ell})$
with 
$\id_X = 
\begin{tikzpicture}[baseline=-.1cm]
	\fill[shaded] (0,-.2) rectangle (.2,.2); 
	\draw (0,-.2) -- (0,.2);
\end{tikzpicture} 
\in 
\cP^\ell_{1,+}$
satisfies
\begin{equation}
\label{eq:LopsidedPA}
\begin{tikzpicture}[baseline=-.1cm]
	\draw[shaded] (0,0) circle (.3cm);
\end{tikzpicture}
=
\id_{1_+}
\qquad\qquad
\begin{tikzpicture}[baseline=-.1cm]
	\fill[shaded, rounded corners=5] (-.6,-.6) rectangle (.6,.6);
	\draw[fill=white] (0,0) circle (.3cm);
\end{tikzpicture}
=
d_X^2 
\id_{1_-}.
\end{equation}
\end{enumerate}
\end{lem}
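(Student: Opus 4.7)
The plan is to translate each loop into a morphism in $\cC$ using the dictionary of Definition~\ref{defn:PAFromCategory}, identify them as left/right pivotal traces of $\id_X$ for the unitary pivotal structure induced by the chosen dual functor, and then evaluate those traces by decomposing $X$ into simples and invoking Perron--Frobenius theory for the matrix $D_X$.

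First I would split each circle horizontally at its diameter and read off the resulting cap and cup with their shadings. The conventions in Definition~\ref{defn:PAFromCategory} then give
\[
\begin{tikzpicture}[baseline=-.1cm]\draw[shaded] (0,0) circle (.2cm);\end{tikzpicture} \;=\; (\coev_X^\pi)^\dag \circ \coev_X^\pi \;\in\; \cC(1_+\to 1_+),
\quad
\begin{tikzpicture}[baseline=-.1cm]\fill[shaded, rounded corners=3] (-.4,-.4) rectangle (.4,.4);\draw[fill=white] (0,0) circle (.2cm);\end{tikzpicture} \;=\; \ev_X^\pi \circ (\ev_X^\pi)^\dag \;\in\; \cC(1_-\to 1_-),
\]
and by the alternate trace formulas of Remark~\ref{rem:UnitaryImpliesPseudounitary} these are exactly $\tr_R^{\varphi_\pi}(\id_X)$ and $\tr_L^{\varphi_\pi}(\id_X)$ respectively. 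So the claims reduce to computing the two pivotal traces of $\id_X$ for the unitary pivotal structures induced by $\vee_{\pi_X}$ and by $\vee_{\pi_X^\ell}$.

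Next I would decompose $X \cong \bigoplus_i X_i$ by orthogonal isometries $\alpha_i : X_i \hookrightarrow X$ so that each $X_i$ is simple and lies in some $\cC_{u_iv_i}$ with $u_i \in V_+$, $v_i \in V_-$ and $\gr(X_i) = E_{u_iv_i}$. Tracial invariance splits $\tr_L^{\varphi_\pi}(\id_X)$ and $\tr_R^{\varphi_\pi}(\id_X)$ as sums over $i$. The rescaling formula \eqref{eq:RenormalizeCupsAndCaps} of Remark~\ref{rem:PerturbUnitaryDualFunctor}, applied relative to the canonical spherical structure of Corollary~\ref{cor:CanonicalSphericalStructure}, then expresses each simple summand as $\tr_L^{\varphi_\pi}(\id_{X_i}) = \pi(E_{u_iv_i})^{1/2} d(X_i) \id_{1_{v_i}}$ and $\tr_R^{\varphi_\pi}(\id_{X_i}) = \pi(E_{u_iv_i})^{-1/2} d(X_i) \id_{1_{u_i}}$, where $d(X_i)>0$ is the canonical spherical dimension and by definition $D_X(u,v) = \sum_{i:\, u_i = u,\, v_i = v} d(X_i)$.

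For part~(1), substituting $\pi_X(E_{uv}) = (\lambda(u)/\lambda(v))^2$ and collecting the coefficient of $\id_{1_u}$ in $\tr_R^{\varphi_{\pi_X}}(\id_X)$ produces $\lambda(u)^{-1}(D_X\lambda_-)(u)$. The crux is now the Perron--Frobenius identity $D_X \lambda_- = d_X \lambda_+$ coming from the SVD of $D_X$ and the normalization of $\lambda_\pm$ in Definition~\ref{defn:StandardSolutionsWithRespectToX}; it collapses the coefficient to $d_X$, yielding $\tr_R^{\varphi_{\pi_X}}(\id_X) = d_X \id_{1_+}$, and the dual relation $D_X^T \lambda_+ = d_X \lambda_-$ handles the other loop. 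Part~(2) is the same computation with $\pi_X^\ell$ in place of $\pi_X$: the additional $d_X$-factors in \eqref{eq:LopsidedGroupoidHomFromFPdim} redistribute asymmetrically between the two loops to produce the claimed values $1$ and $d_X^2$. The only step requiring genuine care is the first one: keeping straight which shaded cap/cup symbol corresponds to which of $\ev$, $\coev$, or a dagger, and into which corner $\cP_{0,\pm}$ each loop lands; once that bookkeeping is in place, the remainder is a short Perron--Frobenius calculation.
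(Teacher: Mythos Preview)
Your proposal is correct. The paper does not actually prove this lemma: it simply declares the statement ``immediate from \cite{1805.09234}'' and moves on. Your argument is therefore strictly more detailed than what appears in the paper --- you supply a self-contained computation using the paper's own machinery (the trace formulas of Remark~\ref{rem:UnitaryImpliesPseudounitary}, the rescaling identities \eqref{eq:RenormalizeCupsAndCaps} of Remark~\ref{rem:PerturbUnitaryDualFunctor}, and the Perron--Frobenius data built into Definition~\ref{defn:StandardSolutionsWithRespectToX}) rather than deferring to the external reference. Your identification of the shaded and unshaded loops with $(\coev_X^\pi)^\dag\circ\coev_X^\pi=\tr_R^{\varphi_\pi}(\id_X)$ and $\ev_X^\pi\circ(\ev_X^\pi)^\dag=\tr_L^{\varphi_\pi}(\id_X)$ matches the conventions of Definition~\ref{defn:PAFromCategory}, and the eigenvector relation $D_X\lambda_- = d_X\lambda_+$ you invoke follows from the normalizations $\|\lambda_\pm\|=1$ together with $\|D_X\lambda_-\|^2 = \lambda_-^T D_X^T D_X\lambda_- = d_X^2$. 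This is exactly the verification a reader would need to carry out to unpack the citation.
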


\begin{rem}
The lopsided planar algebra is obtained from the standard planar algebra
as in Remark \ref{rem:SimultaneouslyScalePivotalStructures}
by replacing the standard $\ev_X$ and $\coev_X$ by $d_X^{1/2} \ev_X$ and $d_X^{-1/2} \coev_X$.
Notice this lopsided unitary pivotal structure varies slightly from the \emph{non-unitary} lopsided convention from \cite[\S1.1]{MR3254427}, which replaces $\ev_X$ and $\coev_X$ by $d_X \ev_X$ and $d_X^{-1} \coev_X$, but does \emph{not} scale $\ev^\dag_X$ nor $\coev^\dag_X$.
Often, it is computationally simpler to calculate a graph planar algebra embedding with respect to the non-unitary lopsided pivotal structure, as the number fields are more manageable.
It is still the case that non-unitary lopsided embeddings give standard unitary embeddings by \cite{MR3254427}.
\end{rem}

\begin{example}
\label{ex:GPA}
Let $\Gamma = (V_+, V_-, E)$ be a finite connected bipartite graph with even/$+$ vertices $V_+$, odd/$-$ vertices $V_-$, and edges $E$.
We consider an edge $\varepsilon \in E$ as directed from $+$ to $-$ with source $s(\varepsilon) \in V_+$ and target $t(\varepsilon) \in V_-$.
We write $\varepsilon^*$ for the same edge with the opposite direction.
Let $\lambda$ denote any Frobenius-Perron eigenvector of the adjacency matrix of $\Gamma$.

Denote by $n_\pm$ the number of vertices in $V_\pm$.
Let $\cM_\pm = \Hilb^{n_\pm}$, where $\Hilb$ is the category of finite dimensional Hilbert spaces considered as a semisimple $\rm C^*$ category.
We pick distinguished simples of $\cM_\pm$ which we name by the vertices in $V_\pm$.
Define $\cM = \cM_+ \oplus \cM_-$, which consists of one copy of $\Hilb$ for every vertex of $\Gamma$.

Now consider $\End^\dag(\cM)$, which we identify with the unitary multifusion category $\Mat^\dag_{n\times n}$.
The simple objects are the $E_{u,v}$ for $u,v \in V_+\amalg V_-$ with fusion rule $E_{u,v}\otimes E_{w,x} = \delta_{v=w} E_{u,x}$, and $1= \bigoplus_{v\in V_+\amalg V_-} E_{v,v}$.
It is straightforward to verify that the unique spherical structure from Corollary \ref{cor:CanonicalSphericalStructure} is given by $\ev_{E_{u,v}} : E_{v,u}\otimes E_{u,v} = E_{v,v} \hookrightarrow 1$ and $\coev_{E_{u,v}}: 1\twoheadrightarrow E_{u,u} = E_{u,v} \otimes E_{v,u}$.
Notice that the canonical spherical structure $\varphi$ satisfies $\varphi_{E_{u,v}} = \id_{E_{u,v}}$ for all vertices $u,v$.

Observe the universal grading groupoid $\cU$ of $\End^\dag(\cM)$ is $\cM_{n_++n_-}$.
By Corollary \ref{cor:ExistsBalancedUnitaryDualFunctors}, we get a canonical $\pi$-balanced unitary dual functor from the homomorphism $\pi : \cM_{n_++n_-} \to \bbR_{>0}$ given by \eqref{eq:StandardGroupoidHomFromFPdim}.
By direct computation as in the proof of Proposition \ref{prop:ExistsUniqueBalancedDual}, $\ev_{E_{u,v}}^\pi$ and $\coev_{E_{u,v}}^\pi$ are given by renormalizing the canonical $1$-balanced evaluation and coevaluation maps:
$$
\ev^\pi_{E_{u,v}}
:=
\left(
\frac{\lambda(u)}{\lambda(v)}
\right)^{1/2}
\ev_{E_{u,v}}
\qquad
\qquad
\coev^\pi_{E_{u,v}}
:=
\left(
\frac{\lambda(v)}{\lambda(u)}
\right)^{1/2}
\coev_{E_{u,v}}.
$$
Thus $\dim^\pi_L(E_{u,v}) = \frac{\lambda(u)}{\lambda(v)}$ and $\dim^\pi_R(E_{u,v}) = \frac{\lambda(v)}{\lambda(u)}$.

We now pick a distinguished dagger functor $F \in \End^\dag(\cM)$ together with its $\pi$-balanced dual
\begin{equation}
\label{eq:FunctorFromBipartiteGraph}
F = \bigoplus_{\varepsilon \in E} E_{s(\varepsilon),t(\varepsilon)}
\qquad
\qquad
F^{\vee_\pi} = \bigoplus_{\varepsilon \in E} E_{t(\varepsilon),s(\varepsilon)}.
\end{equation}
Since $\Gamma$ is connected, we see that $F$ generates $\End^\dag(\cM)$.
Define $1_+ := \bigoplus_{u\in V_+} E_{u,u}$ and $1_- := \bigoplus_{v\in V_-} E_{v,v}$, and note that $1= 1_+ \oplus 1_-$ is an orthogonal decomposition of the unit object such that $F = 1_+\otimes F \otimes 1_-$.

Let $\cG_\bullet$ be the corresponding shaded planar $\Cstar$ algebra corresponding to $(\End^\dag(\cM), F, \vee_\pi)$ under Theorem \ref{thm:PAEquivalence}, which was described in Definition \ref{defn:PAFromCategory}.
We may identify $\cG_{n,\pm}$ defined as in \eqref{eq:BoxSpacesFromCategory} as the complex vector space whose basis consists of the loops of length $2n$ on $\Gamma$ starting at a $\pm$ vertex in $V_\pm$.
For example, 
\begin{align*}
\cG_{n,+} 
&:= 
\Hom_{\End^\dag(\cM)}(F^{{\rm alt}\otimes n} \to  F^{{\rm alt}\otimes n})
\\&\cong
\Hom_{\End^\dag(\cM)}(1 \to (F\otimes F^{\vee_\pi})^{\otimes n}) 
\\&\cong 
\bigoplus_{v\in V_+}
\bigoplus_{\varepsilon_1,\dots, \varepsilon_{2n} \in E}
\Hom_{\End^\dag(\cM)}(E_{v,v} \to E_{s(\varepsilon_1),t(\varepsilon_1)} \otimes E_{t(\varepsilon_2), s(\varepsilon_2)} \otimes \cdots \otimes E_{s(\varepsilon_{2n-1}),t(\varepsilon_{2n-1})} \otimes E_{t(\varepsilon_{2n}), s(\varepsilon_{2n})} )
\\&\cong 
\bigoplus_{v\in V_+}
\operatorname{span}_\bbC\{\text{loops of length $2n$ based at $v$}\}.
\\&\cong 
\operatorname{span}_\bbC\{\text{loops of length $2n$ based at an even/+ vertex}\}.
\end{align*}
Under this identification, it is straightforward to verify that the actions of the shaded planar tangles described in Definition \ref{defn:PAFromCategory} exactly correspond to the actions of the shaded planar tangles for the planar algebra of the bipartite graph $\Gamma$ from \cite{MR1865703}.
\end{example}

From Theorem \ref{thm:PAEquivalence} and the discussion in Example \ref{ex:GPA}, we get the following.

\begin{prop}
\label{prop:GPAandEnd(M)}
Under the equivalence of categories in Theorem \ref{thm:PAEquivalence} for shaded $\Cstar$ planar algebras, the bipartite graph planar algebra $\cG_\bullet$ corresponds to the unitary multifusion category $\End^\dag(\cM)$ with (non-spherical!) unitary pivotal structure obtained from the standard groupoid homomorphism \eqref{eq:StandardGroupoidHomFromFPdim} with respect to $F_\Gamma$ as defined in \eqref{eq:FunctorFromBipartiteGraph}.
\end{prop}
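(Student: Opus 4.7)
The plan is to apply Theorem \ref{thm:PAEquivalence} (equivalently Theorem \ref{thm:UnitaryPAequivalence}) to the triple $(\End^\dag(\cM), F_\Gamma, \vee_{\pi_X})$, construct the associated shaded $\Cstar$ planar algebra $\cP_\bullet$ using the explicit recipe in Definition \ref{defn:PAFromCategory}, and exhibit a canonical isomorphism $\cP_\bullet \cong \cG_\bullet$ of shaded planar $\Cstar$ algebras. Most of the groundwork has already been laid in Example \ref{ex:GPA}, so the proof amounts to verifying that the identification of box spaces done there is compatible with the action of all shaded planar tangles.

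First I would note that by Theorem \ref{thm:PAEquivalence}, the triple $(\End^\dag(\cM), F_\Gamma, \vee_{\pi_X})$ determines a unique shaded planar $\Cstar$ algebra $\cP_\bullet$ up to the 2-categorical ambiguity described in Footnote \ref{footnote:Truncate2Category}; so it suffices to check that the bipartite graph planar algebra $\cG_\bullet$ fits this description. The identification of box spaces $\cP_{n,\pm}$ with $\operatorname{span}_\bbC\{\text{loops of length } 2n \text{ based at a } \pm \text{ vertex}\}$ is already carried out in Example \ref{ex:GPA} using the decomposition of $F_\Gamma$ and $F_\Gamma^{\vee_\pi}$ from \eqref{eq:FunctorFromBipartiteGraph} together with the standard Hom-tensor-dual adjunctions in $\End^\dag(\cM)$; this matches the loop basis description of $\cG_{n,\pm}$ from \cite{MR1865703}.

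The main step is to check that the six generating tangle actions listed in Definition \ref{defn:PAFromCategory} match the corresponding actions in $\cG_\bullet$. The identity strand, vertical composition, and horizontal juxtaposition are immediate from the adjunction identifications. The nontrivial content lies in the shaded cups and caps: by Definition \ref{defn:PAFromCategory}, they are given by $\ev^{\pi_X}_{F_\Gamma}$, $\coev^{\pi_X}_{F_\Gamma}$ and their adjoints, which on the summand $E_{s(\varepsilon), t(\varepsilon)}$ are the rescaled maps
\[
\ev^{\pi_X}_{E_{s(\varepsilon),t(\varepsilon)}}=\left(\tfrac{\lambda(s(\varepsilon))}{\lambda(t(\varepsilon))}\right)^{1/2}\ev_{E_{s(\varepsilon),t(\varepsilon)}},\qquad \coev^{\pi_X}_{E_{s(\varepsilon),t(\varepsilon)}}=\left(\tfrac{\lambda(t(\varepsilon))}{\lambda(s(\varepsilon))}\right)^{1/2}\coev_{E_{s(\varepsilon),t(\varepsilon)}},
\]
as computed in Example \ref{ex:GPA} from \eqref{eq:StandardGroupoidHomFromFPdim}. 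On the loop-basis, composing a loop with a cap joining two adjacent edges $(\varepsilon, \varepsilon^*)$ at a vertex $v$ produces precisely the weight $\sqrt{\lambda(s(\varepsilon))/\lambda(t(\varepsilon))}$ (and a vertex-diagonal delta), which is exactly the bipartite graph planar algebra cup/cap weight from \cite[Prop.~3.1]{MR1865703}. Checking this for all four shadings of cups and caps, and observing that adjoints of cups are caps (since the dual functor is unitary), will reduce to a bookkeeping exercise with the ratios $\lambda(u)/\lambda(v)$.

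The hard part, to the extent there is one, is precisely this bookkeeping: verifying that the normalization conventions implicit in the definition of $\cG_\bullet$ from \cite{MR1865703} agree with those coming from the $\pi_X$-balanced cups and caps, and then invoking the rigidity of shaded planar algebras (two shaded planar $\Cstar$ algebra structures on the same graded vector space that agree on a generating set of tangles agree everywhere) to conclude. Once cups, caps, identity, composition, and tensor product all match, the equivalence of Theorem \ref{thm:PAEquivalence} gives the desired identification $\cG_\bullet \cong \cP_\bullet$, and hence the proposition.
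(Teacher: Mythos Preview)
Your proposal is correct and follows essentially the same approach as the paper: the paper's proof is simply the sentence ``From Theorem~\ref{thm:PAEquivalence} and the discussion in Example~\ref{ex:GPA}, we get the following,'' so all the content is in Example~\ref{ex:GPA}, which carries out exactly the box-space identification and tangle-action verification you outline. Your writeup is in fact more detailed than the paper's, which leaves the matching of cup/cap weights with the conventions of \cite{MR1865703} as ``straightforward to verify.''
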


\section{Spherical states on planar algebras and multitensor categories}
\label{sec:SphericalStates}

Motivated by the example of the graph planar algebra, we now define the notion of a spherical state with respect to a partition on a unitary multitensor category.
Such a spherical state can be chosen to `correct' for a non-spherical unitary pivotal structure on a unitary multitensor category if the corresponding $\pi \in \Hom(\cU \to \bbR_{>0})$ actually comes from a faithful grading by the groupoid $\cM_r$ 
consisting of the groupoid with $r$ objects and exactly one isomorphism between any two objects, which we can identify with
the standard system of matrix units  $\{E_{ij}\}$ for $M_r(\bbC)$.

\subsection{Evaluable planar algebras and spherical states}
\label{sec:EvaluablePlanarSubalgebras}

Below, we discuss sphericality and evaluability for \emph{semisimple} shaded planar algebras, i.e., each $\cP_{n,\pm}$ is a finite dimensional complex semisimple algebra under the usual multiplication in $\cP_\bullet$.

\begin{defn}
A shaded planar algebra is called \emph{evaluable} if $\dim(\cP_{0,\pm}) = 1$.
An evaluable shaded planar algebra is called \emph{spherical} if for all $x\in \cP_{1,+}$ the following two scalars in $\cP_{0,\pm}\cong \bbC$ (via mapping the empty diagrams to $1_\bbC$) agree:
$$
\begin{tikzpicture}[baseline=-.1cm]
	\fill[shaded, rounded corners=5] (-.8,-.8) rectangle (.5,.8);
	\filldraw[fill=white] (0,.3) arc (0:180:.3cm) -- (-.6,-.3) arc (-180:0:.3cm) -- (0,.3);
	\roundNbox{fill=white}{(0,0)}{.3}{0}{0}{$x$}
	\node at (-.4,0) {\scriptsize{$\star$}};
\end{tikzpicture}
=
\begin{tikzpicture}[baseline=-.1cm]
	\filldraw[shaded] (0,.3) arc (180:0:.3cm) -- (.6,-.3) arc (0:-180:.3cm) -- (0,.3);
	\roundNbox{fill=white}{(0,0)}{.3}{0}{0}{$x$}
	\node at (-.4,0) {\scriptsize{$\star$}};
\end{tikzpicture}
$$

For non-evaluable shaded planar algebras, \cite{JonesPANotes} defines sphericality in terms of a \emph{measure} on $\cP_{\bullet}$, which is a pair of linear functionals $\psi_\pm$ on the finite dimensional abelian complex semisimple algebras $\cP_{0,\pm}$.
A measure $(\psi_+,\psi_-)$ is called:
\begin{itemize}
\item
a \emph{state} if $\psi_\pm(p) \geq 0$ for every projection $p\in \cP_{0,\pm}$,
\item
a \emph{faithful state} if $\psi_\pm(p)> 0$ for every non-zero projection $p\in \cP_{0,\pm}$, and
\item
\emph{spherical} if for all $x\in \cP_{1,+}$,
$$
\psi_-\left(\,
\begin{tikzpicture}[baseline=-.1cm]
	\fill[shaded, rounded corners=5] (-.8,-.8) rectangle (.5,.8);
	\filldraw[fill=white] (0,.3) arc (0:180:.3cm) -- (-.6,-.3) arc (-180:0:.3cm) -- (0,.3);
	\roundNbox{fill=white}{(0,0)}{.3}{0}{0}{$x$}
	\node at (-.4,0) {\scriptsize{$\star$}};
\end{tikzpicture}
\,\right)
=
\psi_+\left(
\begin{tikzpicture}[baseline=-.1cm]
	\filldraw[shaded] (0,.3) arc (180:0:.3cm) -- (.6,-.3) arc (0:-180:.3cm) -- (0,.3);
	\roundNbox{fill=white}{(0,0)}{.3}{0}{0}{$x$}
	\node at (-.4,0) {\scriptsize{$\star$}};
\end{tikzpicture}
\,\,\,\,\right).
$$
\end{itemize}
\end{defn}

\begin{ex}
\label{ex:GPA-NotSpherical}
The graph planar algebra is in general not spherical.
For example, taking any edge $\varepsilon$ which connects two vertices of distinct weights, the projection $[\varepsilon \varepsilon^*] \in \cG_{1,+}$ has distinct left and right traces.
However, if we normalize the Frobenius-Perron eigenvector $\lambda$ so that $\sum_{u\in V_+}\lambda(u)^2 = 1 = \sum_{v\in V_-}\lambda(v)^2$, then $\psi(p_v) := \lambda(v)^2$ defines a spherical faithful state on $\cG_\bullet$ \cite[Prop.~3.4]{MR1865703}.
\end{ex}

\begin{remark}
An evaluable shaded planar algebra is spherical if and only if its pivotal projection multitensor category is spherical.
We will define the concepts of measure and (spherical faithful) state for a multitensor category in Section \ref{sec:Evaluable} below.
\end{remark}

\begin{remark}[{\cite[\S8]{MR1929335}}]
If $\cP_\bullet$ is a shaded planar ($\dag$-)algebra with a spherical faithful state,
then any evaluable planar ($\dag$-)subalgebra $\cQ_\bullet\subset \cP_\bullet$ is spherical.
If in addition $\cP_\bullet$ is $\Cstar$ with finite dimensional box spaces, then any evaluable $\cQ_\bullet\subset \cP_\bullet$ is a subfactor planar algebra.
\end{remark}

\subsection{Evaluable multitensor subcategories and spherical states}
\label{sec:Evaluable}

With the graph planar algebra in mind, we now define the notions of measure and state \emph{with respect to a partition $\Pi$} on a pivotal multitensor category which generalizes the similar notion for shaded planar algebras from \S\ref{sec:EvaluablePlanarSubalgebras}.

\begin{defn}
Given a multitensor category $\cC$, a \emph{partition $\Pi$ of $1_\cC$} consists of a family of mutually orthogonal projections $\{p\}\subset \End_\cC(1_\cC)$ such that $\sum_{p\in\Pi} p = 1$.

Given a multitensor category $\cC$ with a partition $\Pi$ of $1_\cC$, a \emph{measure with respect to $\Pi$} is a linear functional $\psi$ on the finite dimensional abelian semisimple $\bbC$-algebra $\End_\cC(1_\cC)$.
We call a measure $\psi$ with respect to $\Pi$:
\begin{itemize}
\item
a \emph{state} if $\psi(p) \geq 0$ for every idempotent $p\in \End_\cC(1_\cC)$ and $\psi(p) = 1$ for all $p\in \Pi$.
\item 
a \emph{faithful state} if $\psi$ is a state and $\psi(p) > 0$ for every non-zero idempotent $p\in \End_\cC(1_\cC)$.
(Note that if $\cC$ has simple tensor unit, there is a unique state, which is automatically faithful.)
\item
\emph{spherical} if $(\cC,\varphi)$ is pivotal and $\psi\circ \tr_L = \psi \circ \tr_R$ for all $c\in\cC$.
\end{itemize}
When $\Pi = \{\id_{1_\cC}\}$ is the trivial partition of $1_\cC$, we omit $\Pi$ from the notation and simply refer to measures and (spherical faithful) states.
\end{defn}

\begin{remark}
Note that in the case where $(\cC,\varphi)$ has non-simple tensor unit, having a faithful spherical state is additional structure over being pivotal.
However, if $(\cC, \varphi)$ is spherical, 
then setting $\Pi$ to be the set of all minimal projections in $\End_\cC(1_\cC)$, there is a canonical faithful spherical state with respect to $\Pi$ given by $\psi(p) = 1$ for every $p\in\Pi$.
\end{remark}

\begin{example}
By Example \ref{ex:GPA-NotSpherical}, the unitary multifusion category of projections of the graph planar algebra has a spherical faithful state with respect to the induced unitary dual functor and the partition $\Pi:=\{\sum_{u\in V_+} p_u,\sum_{v\in V_-} p_v\}$.
\end{example}

\begin{defn}
Suppose $\cC$ is a multitensor category with partition $\Pi$ of $1_\cC$.
For $p\in \Pi$, denote by $1_p$ the summand of $1_\cC$ corresponding to $p\in \End_\cC(1_\cC)$.
We call a unital multitensor subcategory $\cD \subset \cC$ \emph{evaluable with respect to $\Pi$} if for every $p\in \Pi$, $1_p \in \cD$ and $\End_\cD(1_p) = \bbC p$, i.e., 
$1_\cD = \bigoplus_{p\in \Pi} 1_p$ exhibits the decomposition of $1_\cD$ into simple objects of $\cD$.
\end{defn}

\begin{prop}
Let $(\cC,\varphi)$ be a pivotal multitensor category with $\Pi$ a partition of $1_\cC$.
Suppose $\psi$ is a spherical state on $\End_\cC(1_\cC)$ with respect to $\Pi$.
Then if $\cD$ is a unital multitensor subcategory of $\cC$ such that $\varphi_d \in \cD(d\to d)$ for all $d\in \cD$ and which is evaluable with respect to $\Pi$, then $(\cD,\varphi|_\cD)$ is spherical.
\end{prop}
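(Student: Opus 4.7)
The plan is to exploit the decomposition $1_\cD = \bigoplus_{p\in\Pi} 1_p$ into simples in $\cD$, which follows from evaluability, in order to reduce the matrix-valued sphericality condition $\Tr^\varphi_L=\Tr^\varphi_R$ on $\cD$ to the single scalar identity provided by the spherical state $\psi$.

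First I would recall that, by evaluability, $\End_\cD(1_p)=\bbC\cdot p$ for each $p\in\Pi$, so $\End_\cD(1_\cD)=\bigoplus_{p\in\Pi}\bbC\cdot p$. Given $c\in\cD$ and $f\in\End_\cD(c)$, write $c=\bigoplus_{i,j\in\Pi} c_{ij}$ with $c_{ij}:=1_i\otimes c\otimes 1_j$. Off-diagonal components of $f$ under this decomposition contribute zero to the pivotal traces, so by bilinearity of the formulas \eqref{eq:SphericalDefinition} it suffices to treat $f\in\End_\cD(c_{ij})$ for a fixed pair $i,j\in\Pi$.

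For such an $f$, the central observation is that $c_{ij}^\vee=1_j\otimes c^\vee\otimes 1_i$ in $\cD$ (as each $1_p$ is self-dual as a summand of the unit). Consequently, the evaluation $\ev_{c_{ij}}\colon c_{ij}^\vee\otimes c_{ij}\to 1_\cD$ has outer tensorands equal to $1_j$, forcing it to factor as $c_{ij}^\vee\otimes c_{ij}\to 1_j\hookrightarrow 1_\cD$; the symmetric argument applied to $\coev_{c_{ij}^\vee}\colon 1_\cD\to c_{ij}^\vee\otimes c_{ij}^{\vee\vee}$ yields a factorization $1_\cD\twoheadrightarrow 1_j\to c_{ij}^\vee\otimes c_{ij}^{\vee\vee}$. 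Since $\varphi_{c_{ij}}\in\cD$ by hypothesis, the composite defining $\tr_L^\varphi(f)$ lies in $\End_\cD(1_j)=\bbC\cdot 1_j$, say $\tr_L^\varphi(f)=\gamma\cdot 1_j$; symmetrically, $\tr_R^\varphi(f)=\delta\cdot 1_i$ for some $\delta\in\bbC$. Unpacking the definition of $\Tr_L^\varphi,\Tr_R^\varphi$ with $r=|\Pi|$, one checks that all entries of $\Tr_L^\varphi(f)$ vanish except the $(i,j)$-entry, which equals $\gamma$, and likewise for $\Tr_R^\varphi(f)$ with $(i,j)$-entry $\delta$. Matrix sphericality $\Tr_L^\varphi(f)=\Tr_R^\varphi(f)$ is therefore equivalent to the single scalar equation $\gamma=\delta$.

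Finally, the spherical-state hypothesis gives $\psi(\tr_L^\varphi(f))=\psi(\tr_R^\varphi(f))$, i.e., $\gamma\cdot\psi(1_j)=\delta\cdot\psi(1_i)$; the state normalization $\psi(1_p)=1$ for every $p\in\Pi$ then yields $\gamma=\delta$, completing the proof. The main technical step is the factorization of $\ev_{c_{ij}}$ and $\coev_{c_{ij}^\vee}$ through the summand $1_j$ of $1_\cD$, together with the identification $\End_\cD(1_j)=\bbC\cdot 1_j$ from evaluability; once these are in hand, everything else is linear-algebraic bookkeeping against the partition.
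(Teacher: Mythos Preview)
Your argument is correct and follows essentially the same line as the paper's proof: decompose along the partition $\Pi$, use evaluability to see that $\tr_L^\varphi(f)$ and $\tr_R^\varphi(f)$ are scalar multiples of the respective minimal projections in $\End_\cD(1_\cD)$, and then apply the spherical state together with the normalization $\psi(p)=1$ for $p\in\Pi$ to identify those scalars. Your factorization of $\ev_{c_{ij}}$ and $\coev_{c_{ij}^\vee}$ through $1_j$ is just an explicit unpacking of what the paper records as $\tr_L(p\otimes f\otimes q)\in\End_\cD(1_q)=\bbC q$; the two write-ups differ only in level of detail, not in substance.
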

\begin{proof}
Since $\varphi_d \in \cD(d\to d)$ for all $d\in \cD$, we have $\tr_L^\cC = \tr_L^\cD$ and $\tr_R^\cC=\tr_R^\cD$ for all $d\in \cD$.
Suppose $f\in \cD(c \to d)$.
Since $\cD$ is evaluable with respect to $\Pi$, $\Pi\subset \End_\cD(1_\cD)$ is the set of minimal projections (which may not be minimal in $\End_\cC(1_\cC)$).
Then for any $p,q\in \Pi$,
$$
\tr_L(p \otimes f \otimes q)
=
\psi(\tr_L(p \otimes f \otimes q))p
\qquad
\qquad
\tr_R(p \otimes f \otimes q)
=
\psi(\tr_R(p \otimes f \otimes q))q
$$
since $f$ is a morphism in $\cD$, and $\cD$ is evaluable with respect to $\Pi$.
Now since $\psi$ is a spherical state on $\End_\cC(1_\cC)$, we have
$\psi(\tr_L(p \otimes f \otimes q)) = \psi(\tr_R(p \otimes f \otimes q))$, and thus $\cD$ is spherical.
\end{proof}

\begin{example}
\label{ex:StandardWithRespectToX}
As we saw in \S\ref{sec:ScalarLoops}, one important way that partitions $\Pi$ of $1_\cC$ arise naturally is from picking a distinguished object $X$ in a unitary multitensor category whose source and range summands of $1_\cC$ are orthogonal.
That is $1_\cC = 1_+\oplus 1_-$ with $1_\pm$ not necessarily simple such that $X = 1_+\otimes X \otimes 1_-$.
We then set $\Pi = \{p_+ , p_-\}$ where $p_\pm\in \cC(1_\cC \to 1_\cC)$ is the orthogonal projection corresponding to the summand $1_\pm$.
\end{example}

\begin{ex}
\label{ex:SphericalFaithfulStateWithRespectToX}
Building on Definition \ref{defn:StandardSolutionsWithRespectToX} and Examples \ref{ex:GPA-NotSpherical} and \ref{ex:StandardWithRespectToX},
if $X=1_+\otimes X\otimes 1_-$ generates $\cC$ 
such that $1_\cC = 1_+ \oplus 1_-$ and $\cC$ is faithfully graded by $\cM_r$, then defining $\psi(v) := \lambda(v)^2$ for all simple summands $v\subset 1_\cC$ where $\lambda$ is defined as in \eqref{eq:PerronFrobeniusEigenvector} gives a spherical faithful state on $(\cC,\vee_{\text{standard}})$, where $\vee_{\text{standard}}$ is the standard unitary dual functor on $\cC$ with respect to $X$ from \eqref{eq:StandardGroupoidHomFromFPdim}.
One calculates that for all summands $u \subset 1_-$ and $v\subset 1_+$, 
$$
\psi(\tr_L^\pi(\id_{u\otimes X\otimes v}))
=
\lambda(u)\lambda(v) (D_X)_{u,v}
=
\psi(\tr_R^\pi(\id_{u\otimes X\otimes v})).
$$
As this equation is identical to \cite[(7.9)]{1805.09234}, we have that their canonical left/right states of $X$ on $\cC(X\to X)$ 
are given by $\omega_\ell = \psi \circ \tr^\pi_L$ and $\omega_r = \psi\circ \tr^\pi_R$.
\end{ex}

\subsection{The spherical state correction for non-balanced unitary dual functors}
\label{sec:SphericalCorrection}

For this section, $\cC$ is a unitary multitensor category which is faithfully graded by the groupoid $\cM_r$ consisting of 
the groupoid with $r$ objects and exactly one isomorphism between any two objects, which we can identify with the standard system of matrix units  $\{E_{ij}\}$ for $M_r(\bbC)$.
For notational simplicity, we write $\cC_{ij}$ for $\cC_{E_{ij}}$.
Moreover, we assume $1_\cC = \bigoplus_{i=1}^r 1_i$ is an orthogonal decomposition into simples, and we let $p_i \in \cC(1_\cC \to 1_\cC)$ be the minimal projection corresponding to the summand $1_i$.
We assume $\cC$ has the trivial partition $\Pi = \{\id_{1_\cC}\}$.

We now show that one can `correct' for a unitary dual functor on $\cC$ which is not balanced, but comes from a $\pi \in \Hom(\cM_r \to \bbR_{>0})$.
This can be viewed as a generalization of \cite[Thm.~3.1]{MR1865703} for the bipartite graph planar algebra (see also Example \ref{ex:GPA-NotSpherical}) and \cite[Thm.~7.39]{1805.09234}.

\begin{ex}
Suppose $\cC$ is a (unitary) $r\times r$ multifusion category, and let $\pi : \cU \to \bbC^\times$ be a homomorphism.
For every subgroup $\cH \subseteq \cU$ corresponding to the automorphisms of a single object, we must have $\pi(\cH)\subset U(1)$.
When $\cC$ is unitary and $\pi : \cU \to \bbR_{>0}$, we have $\pi(\cH)\subseteq U(1)\cap \bbR_{>0} = \{1\}$.
Thus the canonical groupoid surjection $\cU \twoheadrightarrow \cM_r$ from Remark \ref{rem:CanonicalGroupoidSurjection} induces an isomorphism $\Hom(\cU \to \bbR_{>0})\cong\Hom(\cM_r \to \bbR_{>0})$.
\end{ex}

\begin{fact}
\label{fact:BijectionForMatrixUnitHomomorphisms}
An element $\pi \in \Hom(\cM_r \to \bbR_{>0})$ is completely determined by its values on $E_{i+1, i}$ for $i=1,\dots, r-1$, which can be arbitrary.
Hence $\pi \mapsto (\pi(E_{i+1,i}))_{i=1}^{r-1}$ is a bijection
$\Hom(\cM_r \to \bbR_{>0}) \cong \bbR_{>0}^{r-1}$.
\end{fact}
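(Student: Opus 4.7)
The plan is to exploit the explicit structure of the codiscrete groupoid $\cM_r$: its morphisms are precisely the matrix units $E_{ij}$ subject only to the composition law $E_{ij}\circ E_{jk}=E_{ik}$ (with $E_{ii}$ the identities and $E_{ij}^{-1}=E_{ji}$). Since $\bbR_{>0}$ is viewed as a one-object groupoid, a functor $\pi:\cM_r\to\bbR_{>0}$ is exactly an assignment $E_{ij}\mapsto \pi(E_{ij})\in\bbR_{>0}$ satisfying $\pi(E_{ii})=1$ and $\pi(E_{ij})\pi(E_{jk})=\pi(E_{ik})$ for all $i,j,k$.

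For injectivity of the map $\pi\mapsto(\pi(E_{i+1,i}))_{i=1}^{r-1}$, I would observe that in $\cM_r$ every morphism $E_{ji}$ with $i<j$ equals the composite $E_{j,j-1}\circ E_{j-1,j-2}\circ\cdots\circ E_{i+1,i}$ (both sides are morphisms $i\to j$, and $\cM_r$ has exactly one such), so multiplicativity forces
\[
\pi(E_{ji})=\prod_{k=i}^{j-1}\pi(E_{k+1,k}) \qquad (i<j),
\]
and then $\pi(E_{ij})=\pi(E_{ji})^{-1}$ is determined as well. Together with $\pi(E_{ii})=1$, this shows $\pi$ is completely determined by the tuple $(\pi(E_{i+1,i}))_{i=1}^{r-1}$.

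For surjectivity, given any $(a_1,\dots,a_{r-1})\in\bbR_{>0}^{r-1}$, define
\[
\pi(E_{ji}):=\prod_{k=i}^{j-1}a_k \text{ for } i<j,\qquad \pi(E_{ij}):=\pi(E_{ji})^{-1} \text{ for } i<j,\qquad \pi(E_{ii}):=1.
\]
A short case check (splitting on the relative ordering of $i,j,k$) verifies the cocycle relation $\pi(E_{ij})\pi(E_{jk})=\pi(E_{ik})$: each side telescopes to a product of the $a_\ell$ and their inverses over the same range, because the defining formula only depends on the endpoints. Thus $\pi$ is a well-defined groupoid homomorphism with $\pi(E_{i+1,i})=a_i$.

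There is no real obstacle here; the only place one must be mildly careful is the verification that the explicit formula really gives a functor, but since $\cM_r$ is codiscrete (equivalently, equivalent as a groupoid to the terminal groupoid), the "path" generators $E_{i+1,i}$ form a spanning tree with no nontrivial cycle relations, so any assignment of their values extends uniquely and consistently.
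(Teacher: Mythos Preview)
Your argument is correct; the paper itself states this as a Fact without proof, treating it as self-evident from the structure of the codiscrete groupoid $\cM_r$. Your spanning-tree observation is exactly the right way to see it, and there is nothing to add.
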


\begin{lem}
\label{lem:RatioOfDimensionsBijection}
The function
$$
\psi \mapsto \left(\frac{\psi(p_{i+1})}{\psi(p_{i})}\right)_{i=1}^{r-1}
$$
is a bijection between faithful states $\psi$ on $\cC$ with respect to $\Pi$ and $\bbR_{>0}^{r-1}$.
\end{lem}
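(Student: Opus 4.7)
The plan is to unwind the definition of a faithful state with respect to the trivial partition $\Pi = \{\id_{1_\cC}\}$ and observe that such a state is parametrized uniquely by its values on the minimal projections $p_1,\dots,p_r$, subject to positivity and a single normalization. From there the bijection is a straightforward exercise in positive reals.

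First I would note that since $\End_\cC(1_\cC) = \bigoplus_{i=1}^r \bbC p_i$, any linear functional $\psi$ on $\End_\cC(1_\cC)$ is determined by the tuple $(\psi(p_1),\dots,\psi(p_r))$. The state condition relative to $\Pi = \{\id_{1_\cC}\}$ forces $\psi(p) \geq 0$ for every idempotent, which is equivalent to $\psi(p_i) \geq 0$ for all $i$, together with the normalization $\psi(\id_{1_\cC}) = \sum_{i=1}^r \psi(p_i) = 1$. Faithfulness then promotes each inequality to $\psi(p_i) > 0$. Hence the set of faithful states is canonically identified with the open simplex
\[
\Delta := \left\{(x_1,\dots,x_r)\in \bbR_{>0}^r \,\middle|\, \sum_{i=1}^r x_i = 1\right\}.
\]

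Under this identification, the map in the statement becomes $\Phi : \Delta \to \bbR_{>0}^{r-1}$ given by $\Phi(x_1,\dots,x_r) = (x_{i+1}/x_i)_{i=1}^{r-1}$. To construct the inverse, given $(a_1,\dots,a_{r-1}) \in \bbR_{>0}^{r-1}$, I set $y_1 := 1$ and $y_{i+1} := a_i y_i$ for $1 \leq i \leq r-1$, so that $y_i = \prod_{j=1}^{i-1} a_j > 0$. Normalizing by $S := \sum_{i=1}^r y_i > 0$ yields $x_i := y_i/S \in \bbR_{>0}$ with $\sum x_i = 1$ and $x_{i+1}/x_i = y_{i+1}/y_i = a_i$. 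Uniqueness is immediate: any $(x_1,\dots,x_r) \in \Delta$ with $x_{i+1}/x_i = a_i$ must satisfy $x_i = x_1 \prod_{j<i} a_j$, so $x_1 = 1/S$ is forced.

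No serious obstacle is anticipated; this is purely elementary once the definitions are spelled out. The only subtlety worth recording is the role of the trivial partition: had $\Pi$ been finer, the normalization $\psi(p) = 1$ for $p \in \Pi$ would impose more constraints and alter the dimension count, but for $\Pi = \{\id_{1_\cC}\}$ there is exactly one linear constraint among the $r$ positive parameters, leaving an $(r-1)$-dimensional space that matches $\bbR_{>0}^{r-1}$.
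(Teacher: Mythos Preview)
Your proposal is correct and follows essentially the same approach as the paper: both identify faithful states with respect to the trivial partition as positive tuples $(\psi(p_i))_i$ subject to a single normalization, and then observe that the successive ratios determine such a tuple uniquely. The paper phrases this by passing through proportionality classes of nonvanishing functionals, while you work directly on the open simplex and write down an explicit inverse, but the content is the same.
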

\begin{proof}
First, consider the set $\cF$ of all linear functionals $\psi:\cC(1_\cC \to 1_\cC) \to \bbC$ such that $\psi(p_i) \neq 0$ for all $i=1,\dots, r$.
The map $\cF \ni \psi \mapsto \left(\frac{\psi(p_{i+1})}{\psi(p_{i})}\right)_{i=1}^{r-1}\in \bbR_{>0}^{r-1}$ is clearly well defined and surjective.
Moreover, we see that $\psi_1$ and $\psi_2$ map to the same element of $\bbR_{>0}^{r-1}$ if and only if they are proportional:
$$
\frac{\psi_1(p_{i+1})}{\psi_1(p_{i})}
=
\frac{\psi_2(p_{i+1})}{\psi_2(p_{i})}
\qquad
\forall i=1,\dots,r-1
\qquad
\Longleftrightarrow
\qquad
\frac{\psi_2(p_i)}{\psi_1(p_{i})}
=
\frac{\psi_2(p_{i+1})}{\psi_1(p_{i+1})}
\qquad
\forall i=1,\dots,r-1.
$$
Finally, given a $\psi \in \cF$, there is exactly one faithful state with respect to $\Pi$ which is proportional to it.
\end{proof}

\begin{thm*}[Theorem \ref{thm:UniqueSphericalFaithfulState}]
Given a $\pi \in \Hom(\cM_r \to \bbR_{>0})$, there is a unique spherical faithful state $\psi^\pi$ with respect to $\Pi$ for $(\cC,\vee_\pi,\nu^\pi, \varphi^\pi)$.
\end{thm*}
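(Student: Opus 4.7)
The plan is to reduce the sphericality condition $\psi^\pi\circ\tr_L^\pi = \psi^\pi\circ\tr_R^\pi$ to a system of linear constraints on the values $\psi^\pi(p_i)$ for $i=1,\dots,r$, and then invoke Lemma \ref{lem:RatioOfDimensionsBijection} to extract existence and uniqueness.

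First, I would reduce to checking the spherical identity on identity endomorphisms of simple objects. Decomposing an arbitrary $c\in \cC$ into an orthogonal direct sum of simples $\bigoplus_k c_k$ via isometries $v_k\in \cC(c_k\to c)$, the tracial property of $\tr_L^\pi$ together with $v_\ell^\dag\circ v_k = \delta_{k=\ell}\id_{c_k}$ and $\End_\cC(c_k)=\bbC\id_{c_k}$ writes $\tr_L^\pi(f)$ as a linear combination of the $\tr_L^\pi(\id_{c_k})$, and similarly for $\tr_R^\pi$. Thus sphericality on all $f\in\cC(c\to c)$ follows once it is established for $f=\id_c$, $c$ simple.

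Second, for a simple $c\in \cC_{ij}$ (so $\gr(c)=E_{ij}$), since $c^{\vee_\pi}\otimes c\in\cC_{jj}$ and $c\otimes c^{\vee_\pi}\in\cC_{ii}$, the alternate formulas of Remark \ref{rem:UnitaryImpliesPseudounitary} give $\tr_L^\pi(\id_c) = \ev_c^\pi\circ(\ev_c^\pi)^\dag = \dim_L^\pi(c)\,p_j$ and $\tr_R^\pi(\id_c) = (\coev_c^\pi)^\dag\circ\coev_c^\pi = \dim_R^\pi(c)\,p_i$. The $\pi$-balancing condition \eqref{eq:PhiBalancing} with $f=\id_c$ and $g=E_{ij}$, combined with $\Psi(p_k)=1$, yields
$$
\dim_L^\pi(c) \;=\; \pi(E_{ij})\,\dim_R^\pi(c).
$$
Hence the desired identity $\psi^\pi(\tr_L^\pi(\id_c)) = \psi^\pi(\tr_R^\pi(\id_c))$ is equivalent to $\pi(E_{ij})\,\psi^\pi(p_j) = \psi^\pi(p_i)$. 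By faithfulness of the grading by $\cM_r$, every pair $(i,j)$ is represented by some simple $c\in\cC_{ij}$, so this must hold for all $i,j$.

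Third, by the groupoid functoriality $\pi(E_{ij}) = \pi(E_{i,1})\pi(E_{1,j})$, these constraints are consistent and reduce to the $r-1$ ratio conditions $\psi^\pi(p_{i+1})/\psi^\pi(p_i) = \pi(E_{i+1,1})/\pi(E_{i,1})$. Together with the normalization $\psi^\pi(\id_{1_\cC}) = \sum_i \psi^\pi(p_i) = 1$, Lemma \ref{lem:RatioOfDimensionsBijection} and Fact \ref{fact:BijectionForMatrixUnitHomomorphisms} then produce a unique faithful state, given explicitly by
$$
\psi^\pi(p_i) \;=\; \pi(E_{i,1})\Big/\sum_{j=1}^r \pi(E_{j,1}).
$$
The main (minor) obstacle in the proof is cleanly extracting the identity $\dim_L^\pi(c)/\dim_R^\pi(c) = \pi(\gr(c))$ from the $\pi$-balancing equation; once this is in hand, the bijections already established convert the problem into elementary linear algebra and the result is immediate.
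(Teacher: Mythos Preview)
Your proposal is correct and follows essentially the same approach as the paper: reduce to simple objects via traciality of $\tr_L^\pi,\tr_R^\pi$, translate the sphericality condition on a simple $c\in\cC_{ij}$ into the ratio constraint $\psi^\pi(p_i)/\psi^\pi(p_j)=\pi(E_{ij})$ using the $\pi$-balancing identity $\dim_L^\pi(c)/\dim_R^\pi(c)=\pi(E_{ij})$, and then invoke Fact~\ref{fact:BijectionForMatrixUnitHomomorphisms} and Lemma~\ref{lem:RatioOfDimensionsBijection}. The only cosmetic differences are that the paper performs the reduction step after the simple case (and passes through isotypic components rather than directly to simples), and does not record the explicit formula for $\psi^\pi(p_i)$.
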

\begin{proof}
\mbox{}
\item[\underline{Step 1:}]
Suppose $c\in \cC_{ij}$ is simple.
Then 
$$
\psi^\pi(\tr_L^\pi(\id_c))=\psi^\pi(\tr_R^\pi(\id_c))
\qquad
\Longleftrightarrow
\qquad
\psi^\pi(p_j) \dim_L^\pi(c) = \psi^\pi(p_i) \dim_R^\pi(c),
$$
which is equivalent to
$$
\frac{\psi^\pi(p_i)}{\psi^\pi(p_j)}
=
\pi(E_{ij})
\underset{\text{\eqref{eq:RatioHomomorphism}}}{=}
\frac{\dim_L^\pi(c)}{\dim_R^\pi(c)}.
$$
Notice that both $\pi$ and $E_{ij} \mapsto \frac{\psi^\pi(p_i)}{\psi^\pi(p_j)}$ are groupoid homomorphisms, so the above equality holds for all simple $c\in \cC_{ij}$ for all $i,j=1,\dots, r$ if and only if it holds for all simple $c\in \cC_{i+1, i}$ for all $i=1,\dots, r-1$.
This is equivalent to both homomorphisms corresponding to the same element of $\bbR_{>0}^{r-1}$ under the bijections from Fact \ref{fact:BijectionForMatrixUnitHomomorphisms} and Lemma \ref{lem:RatioOfDimensionsBijection} respectively.
Hence there is a unique choice of $\psi^\pi$ which works.

\item[\underline{Step 2:}]
Suppose $c\in \cC$ is an orthogonal direct sum of $n$ objects isomorphic to the simple object $a\in \cC$ and $f\in \cC(a\to a)$.
Pick $n$ isometries $v_1, \dots, v_n \in \cC(a \to c)$ with orthogonal ranges so that $\sum_{i=1}^n v_i\circ v_i^\dag = \id_c$.
Then for $\psi^\pi$ defined in Step 1,
\begin{align*}
\psi^\pi(
\tr_L^\pi(f)
)
&=
\sum_{i=1}^n
\psi^\pi(
\tr_L^\pi(v_i\circ v_i^\dag \circ f)
)
=
\sum_{i=1}^n
\psi^\pi(
\tr_L^\pi(v_i^\dag \circ f\circ v_i)
)
\\&=
\sum_{i=1}^n
\psi^\pi(
\tr_R^\pi(v_i^\dag \circ f\circ v_i)
)
=
\sum_{i=1}^n
\psi^\pi(
\tr_L^\pi(v_i\circ v_i^\dag \circ f)
)
=
\psi^\pi(
\tr_R^\pi(f)
).
\end{align*}

\item[\underline{Step 3:}]
Suppose $c\in \cC$ and $f\in \cC(c\to c)$ are arbitrary.
Decompose $c$ into an orthogonal direct sum of isotypic components and apply Step 2.
\end{proof}

\bibliographystyle{amsalpha}
{\footnotesize{
\bibliography{../../../../bibliography}
}}
\end{document}